\title{Sufficient conditions for local tabularity of a polymodal logic}
\author{Ilya B.~Shapirovsky}
\address{New Mexico State University, USA}
  \email{ilshapir@nmsu.edu}
\newcommand\hide[1]\empty
\newtheorem{theorem}{Theorem}[section]
\newtheorem{proposition}[theorem]{Proposition}
\newtheorem{lemma}[theorem]{Lemma}
\newtheorem{corollary}[theorem]{Corollary}
\newtheorem{problem}[theorem]{Problem}
\newtheorem{claim}{Claim}[theorem]
\theoremstyle{definition}
\newtheorem{definition}[theorem]{Definition}
\newtheorem*{definition*}{Definition}
\def\Al{\mathrm{A}}
\def\AlA{\Al}
\def\AlB{\mathrm{B}}
\newcommand\framets[1]{#1}
\def\frI{\framets{I}}
\def\frF{\framets{F}}
\def\frG{\framets{G}}
\def\toto{\twoheadrightarrow}
\def\mM{\framets{M}}
\def\frA{\framets{A}}
\newcommand\LSum[2]{{\textstyle \sum_{#1}{#2}}}
\newcommand\LSuml[2]{{\textstyle {\sum\limits^{\lex}}_{#1}{#2}}}
\def\tiff{\text{ iff }}
\def\clI{\mathcal{I}}
\def\clF{\mathcal{F}}
\def\Log{\myoper{Log}}
\newcommand\myoper[1]{\mathop{\myopts{#1}}}
\newcommand\myopts[1]{\mathrm{#1}}
\def\lex{\mathrm{lex} }
\def\Di{\lozenge}
\def\Dih{{\Di^{\mathrm{h}}}}
\def\Div{{\Di^{\mathrm{v}}}}
\def\Boxh{\Box^{\mathrm{h}}}
\def\DiAl{\Di_\Al}
\def\imp{\rightarrow}
\def\GL{\LogicNamets{GL}}
\newcommand\LogicNamets[1]{\logicts{#1}}
\newcommand\logicts[1]{{\textsc{#1}}}
\newcommand\LS[1]{\LogicNamets{S#1}}
\newcommand\LK[1]{\LogicNamets{K#1}}
\def\vL{L}
\def\Alg{\myoper{Alg}}
\def\clU{\mathcal{U}}
\def\clV{\mathcal{V}}
\def\clC{\mathcal{C}}
\def\clD{\mathcal{D}}
\def\clK{\mathcal{K}}
\def\clV{\mathcal{V}}
\def\clS{\mathcal{S}}
\def\clT{\mathcal{T}}
\def\clP{\mathcal{P}}
\def\EE{\exists}
\def\AA{\forall}
\def\dom{\myoper{dom}}
\def\restr{{\upharpoonright}}
\newcommand\refl[1]{#1^{\mathrm{r}}}
\newcommand\irrefl[1]{#1^{\mathrm{ir}}}
\def\sub{\subseteq}
\newcommand\Def{\myoper{Def}}
\newcommand\ind{\myoper{ind}}
\newcommand\Cl{\myoper{Cl}}
\newcommand\Lind[2]{\myoper{Lind}(#1,#2)}
\newcommand\ov[1]{\overline{#1}}
\def\AllPart{\mathcal{A}}
\def\v{\theta}
\def\vext{\bar{\v}}
\def\AllDef{\mathcal{D}}
\def\up{\uparrow}
\def\rk{\myoper{rk}}
\def\AllCl{\mathcal{C}}
\def\vf{\varphi}
\def\mo{\vDash}
\def\vd{\vdash}
\def\con{\wedge}
\def\emp{\varnothing}
\def\SubFrs{\myoper{Sub}}
\def\se{\subseteq}
\newcommand\languagets[1]{\logicts{#1}}
\def\PV{\languagets{PV}}
\newcommand\trSub[2]{{\left[#1\right]}\restr{#2}}
\newcommand\reflTr[1]{{[#1]}_{\mathrm{r}}}
\newcommand\cone[2]{{#1}\!\left<#2\right>}
\begin{document}

\begin{abstract}
On relational structures and on polymodal logics, we describe operations which preserve local tabularity.
This provides new sufficient semantic and axiomatic conditions for local tabularity of a modal logic.
The main results are the following.

We show that local tabularity does not depend on reflexivity.
Namely, given a class $\clF$ of frames, consider the class $\refl{\clF}$
of frames, where the reflexive closure operation was applied to each relation in every frame in $\clF$.
We show that if the logic of $\refl{\clF}$ is locally tabular, then the logic of $\clF$ is locally tabular as well.

Then we consider the operation of sum on Kripke frames, where a family of frames-summands is  indexed by elements of another frame.
We show that if both the logic of indices and the logic of summands are locally tabular, then the logic of corresponding sums is also locally tabular.

Finally, using the previous theorem,
we describe an operation on logics that preserves local tabularity: we provide a set of formulas
such that the extension of the fusion of two canonical locally tabular logics with these formulas is locally tabular.

\smallskip
\textbf{Keywords:} modal logic, locally tabular logic, locally finite algebra, finite height, pretransitivity, lexicographic sum of Kripke frames
\end{abstract}

\maketitle

\section{Introduction}
We study local tabularity of normal propositional (poly)modal logics.

A logic $\vL$ is {\em locally tabular},
if, for every finite $k$, $L$ contains only a finite number of pairwise nonequivalent
formulas in a given $k$ variables.
Equivalently, a logic $\vL$ is locally tabular iff the variety of its algebras is locally finite, i.e., every $L$-algebra is locally finite.
Local tabularity is a strong property:
if a logic is locally tabular, then it has the finite model property (thus it is Kripke complete);
every extension of a locally tabular logic is locally tabular (thus it has the finite model property);
every finitely axiomatizable extension of a locally tabular logic is decidable.

Local tabularity is well-studied for the case of unimodal transitive logics (a logic is transitive, if it contains the formula $\Di\Di p\imp \Di p$, which expresses the transitivity of a binary relation).
According to the classical results by Segerberg \cite{Seg_Essay} and Maksimova \cite{Maks1975},
a transitive logic is locally tabular iff it contains a modal {\em formula $B_h$ of finite height} for some finite $h$.

In the non-transitive unimodal, and in the polymodal case, no axiomatic criterion of local tabularity is known.
We provide sufficient semantic and axiomatic conditions for local tabularity.

The paper is organized as follows.

In Section \ref{sec:prel}, we provide basic definitions and an exposition of some previous results.

In Section  \ref{sec:subframes}, we discuss {\em $k$-finite logics}:
we say that $L$ is {\em $k$-finite} for $k<\omega$, if
$L$ contains only a finite number of pairwise nonequivalent formulas in a given $k$ variables.
It is known \cite{Maksimova89} that every unimodal transitive 1-finite logic is locally tabular. Recently \cite{Glivenko2021} it was observed that without transitivity, this equivalence does not hold. In Section \ref{sec:subframes}, we give corollaries of $k$-finiteness and in particular, give a necessary axiomatic condition for $2$-finiteness of a logic.

In Section \ref{sec:refl}, we prove that local tabularity does not depend on reflexivity.
Namely, given a class $\clF$ of frames, consider the class $\refl{\clF}$
of frames, where the reflexive closure operation was applied to every relation in every frame in $\clF$.
We show that if the logic of $\refl{\clF}$ is locally tabular, then the logic of $\clF$ is locally tabular as well (the converse implication is trivial).

In Section 5,  we consider the operation of sum on Kripke frames, where a family of frames-summands is  indexed by elements of another frame.
 There is a number of transfer results for sums in modal logic: they relate to questions of complete axiomatization
\cite{Bekl-Jap,Balb2009,BalbDuque2016},
finite model property and decidability \cite{BabRyb2010,AiML2018-sums}, computational complexity \cite{ShapJapar08,CondSatOnline}.
Local tabularity also transfers under the operation of sum:
we show that if both the logic of indices and the logic of summands are locally tabular, then the logic of corresponding
sums is locally tabular. This fact implies the following axiomatic transfer result. Consider two disjoint alphabets $\AlA$ and $\AlB$ of modalities.
Let
$\Phi(\AlA,\AlB)$ be the set of all formulas
\begin{equation*}
\Dih \Div p\to \Div p, \; \Div\Dih p\to \Div p, \;\Div p\to \Boxh \Div p
\end{equation*}
with $\Div$ in $\AlA$ and $\Dih$ in $\AlB$. These formulas were considered earlier in \cite{Balb2009} in connection with axiomatization problems
for {\em lexicographic products of modal logics},
  and also in \cite{Bekl-Jap}
in the context of polymodal provability logic.
Let $L_1$ be a locally tabular logic in the language of $\Al$,
and let $L_2$ be a locally tabular logic in the language of $\AlB$.
Let $L_1*L_2$ be the fusion of $L_1$ and $L_2$,  that is, the smallest logic in the language of $\AlA\cup\AlB$ that contains $L_1\cup L_2$.
It is well-known that the fusion operation does not preserve local tabularity.
However, the extension of $L_1*L_2$ with the formulas $\Phi(\AlA,\AlB)$  remains locally tabular, provided that
this extension is Kripke complete, in particular, if $L_1$ and $L_2$ are canonical.

\section{Preliminaries}\label{sec:prel}
\subsection{Basic definitions} We assume that the reader is
familiar with basic notions in modal logic; see, e.g., \cite{CZ,BDV} for the references.
\subsubsection{Modal syntax} Fix a set $\Al$, the {\em alphabet of modalities}. We will always assume that $\Al$ is finite.

The set of {\em modal formulas over $\AlA$} is built from
a countable set of {\em variables} $\PV=\{p_0,p_1,\ldots\}$ using Boolean connectives $\bot,\imp$ and unary connectives $\Di\in \AlA$ ({\em modalities}).
Other logical connectives
are defined as abbreviations in the standard way, in particular $\Box\vf$
denotes $\neg \Di \neg \vf$  (perhaps with an index).

We will also use the following abbreviations: $\Di^0 \vf=\vf$,
$\Di^{i+1}\vf=\Di^i\Di\vf$,
$\Di^{\leq m} \vf =
\bigvee_{i\leq m} \Di^i \vf$, $\Box^{\leq m} \vf =\neg \Di^{\leq m} \neg \vf$.
Additionally, we write $\DiAl\vf$ for $\bigvee_{\Di\in\Al}\Di \vf$.

The term {\em unimodal} refers to the case when $\Al$ is a singleton.

A modal formula $\vf$ is said to be a {\em $k$-formula} for $k\leq\omega$, if every variable occurring in $\vf$ belongs to $\{p_i\mid i<k\}$.

\subsubsection{Relational and algebraic semantics}

An {\em $\AlA$-frame} is a structure  $\frF=(X,(R_\Di)_{\Di\in \AlA})$,
where $X$ is a set and ${R_\Di\se X{\times}X}$ for $\Di\in \AlA$.
We write $\dom{\frF}$ for $X$, the {\em domain of }$\frF$.\footnote{We do not require that $X\neq \emp$.}
For $a\in X$, $Y\subseteq X$, we put $R_\Di(a)=\{b\mid aR_\Di b\}$,
$R_\Di[Y]=\bigcup_{a\in Y} R_\Di(a)$.

Let $k\leq\omega$. A {\em $k$-valuation in a frame $F$} is a map $\{p_i\mid i<k\}\to \clP(X)$,
where $\clP(X)$ is the set of all subsets of $X$.
A {\em (Kripke) $k$-model on} $\frF$ is a pair $(\frF,\theta)$, where $\theta$ is a $k$-valuation.
The \emph{truth} of $k$-formulas in a $k$-model is defined in the usual way, in particular
${(\frF,\theta),a\mo\Di \vf}$ means that  ${(\frF,\theta),b\mo\vf}$  for some $b$ in $R_\Di(a)$.
We put
$$\vext(\vf)=\{a\mid (F,\v),a\mo\vf\}.$$
A $k$-formula $\vf$ is {\em true in a $k$-model $\mM$}, if $\mM,a\mo\vf$ for all $a$ in $\mM$.
A $k$-formula $\vf$ is {\em valid in a frame $\frF$}, in symbols $\frF\mo\vf$,
if $\vf$ is true in every $k$-model on $\frF$.

For a binary relation $R$ on a set $X$,
let $R^*$ denote its transitive reflexive closure $\bigcup_{i <\omega} R^i$, where
where $R^0$ is the diagonal $\{(a,a)\mid a\in X\}$ on $X$, $R^{i+1}=R\circ R^i$, and $\circ$ is the composition.
For an $\Al$-frame $\frF=(X,(R_\Di)_{\Di\in \Al})$, let $$R_\frF=\bigcup_{\Di\in \Al} R_\Di.$$
The {\em restriction  $\frF\restr Y$  of $\frF$ to its subset $Y$} is the frame $(Y, (R_\Di\cap (Y\times Y))_{\Di\in\Al})$.

The notions of p-morphism, generated subframe, disjoint sum of frames are defined in the standard way, see, e.g., \cite[Section 3.3]{CZ} or
\cite[Sections 2.1 and 3.3]{BDV}.

An {\em $\Al$-modal algebra} is a~Boolean algebra endowed
with unary operations $f_\Di$,  $\Di\in\Al$, that distribute w.r.t.~finite
disjunctions. A modal formula $\vf$ is {\em valid} in an algebra $B$, if $\vf=1$ holds in $B$.
The {\em (complex) algebra $\Alg{F}$ of an $\Al$-frame}  $\frF=(X,(R_\Di)_{\Di\in \AlA})$ is the
powerset Boolean algebra of $X$ endowed with the following unary operations $f_\Di$, $\Di\in\Al$:
$f_\Di(Y)=R_\Di^{-1}[Y]$ for $Y\subseteq X$. It is immediate that  the algebra of an $\Al$-frame is an $\Al$-modal algebra, and that
in a $k$-model $(\frF,\theta)$,  $\vext(\vf)$ is the value of $\vf$ under $\theta$ in the modal algebra $\Alg{F}$, see \cite[Section 5.2]{BDV} for details.

\smallskip

A formula is {\em valid in a class} $\clC$ of (relational or algebraic) structures, if it is  valid in every structure in $\clC$.
Validity of a set of formulas means validity of every formula in this set.

\subsubsection{Logics}
A ({\em propositional normal modal}) {\em logic} {\em in the language of $\Al$} is a set $\vL$ of formulas
over $\Al$
that contains all classical tautologies, the axioms $\neg\Di \bot$  and
$\Di  (p_0\vee p_1) \imp \Di  p_0\vee \Di  p_1$ for each $\Di$  in $\AlA$, and is closed under the rules of modus ponens,
substitution and {\em monotonicity}; the latter means that for each $\Di$ in $\AlA$, $\vf\imp\psi\in \vL$ implies $\Di \vf\imp\Di \psi\in \vL$.
We write $\vL\vd \vf$ for $\vf\in L$. For a set $\Phi$ of modal formulas over $\AlA$,
$\vL+\Phi$ is the smallest normal logic containing $\vL\cup\Phi$.

An {\em $L$-structure} (a frame or algebra) is a structure where $L$ is valid.

The set of all formulas that are valid in a class $\clC$ of (relational or algebraic) structures is called the {\em logic of} $\clC$ and is denoted by $\Log{\clC}$.
It is straightforward the  $\Log{\clC}$ is a propositional normal modal logic.
Moreover, any propositional normal modal logic $L$ is the logic of the class of $L$-algebras, see, e.g.,
\cite[Section 7.5]{CZ}. For a structure $B$, we write $\Log{B}$ for $\Log{\{B\}}$.

A logic $L$ is {\em Kripke complete}, if $L$ is the logic of a class of frames.
A logic has the {\em finite model property},
if it is the logic of a class of finite frames (by the cardinality of a frame, algebra, or model we mean the cardinality of its domain).

The {\em $k$-canonical model of} $\vL$ is built from
maximal $\vL$-consistent sets of $k$-formulas; the canonical relations and the valuation are defined in the standard way.  The following fact is well-known, see, e.g., \cite[Chapter 8]{CZ}.
\begin{proposition}\label{prop:k-canonical-model}[Canonical Model Theorem]
Let $\mM$ be the  $k$-canonical model of a logic $\vL$. Then
for all $k$-formulas $\vf$ we have:
$\vL\vd\vf$ iff $\vf$ is true in $M$.
\end{proposition}

\subsection{Locally tabular logics; Maltsev's criterion}
The algebraic notions required for this subsection can be found in, e.g., \cite[Chapter 2]{BurrisSankappanavar2012}.

Recall that an algebra $B$ is {\em locally finite}, if every finitely-generated
subalgebra of $B$ is finite.

Recall that a logic $\vL$ is {\em locally tabular},
if, for every $k<\omega$, $L$ contains only a finite number of pairwise nonequivalent
$k$-formulas. The latter means that
the countably generated free (or Lindenbaum) algebra $\Lind{L}{\omega}$ of $L$ is locally finite.
Equivalently, a logic $\vL$ is locally tabular iff the {\em variety of its algebras is locally finite}, i.e., every $L$-algebra is locally finite.

For $k<\omega$, we say that $L$ is {\em $k$-finite}, if
$L$ contains only a finite number of pairwise nonequivalent
$k$-formulas. Equivalently, $L$ is $k$-finite, if
the $k$-generated free algebra $\Lind{L}{k}$ of $L$ is finite, and equivalently,
the $k$-canonical model of $\vL$ is finite.

The following criterion of local finiteness was given by A. Maltsev \cite{Malcev73}.
A class  $\clC$ of algebras of a finite signature is said to be
{\em uniformly locally finite}, if
there exists a function $f:\omega\to \omega$ such that the cardinality of a subalgebra of any $B\in \clC$ generated by
$k<\omega$ elements does not exceed $f(k)$.
Local finiteness of the variety $\clK$  generated by $\clC$
is equivalent to uniform local finiteness of $\clC$ \cite[Section 14, Theorem 3]{Malcev73}. Moreover,
from the proof of this theorem it follows that if every $k$-generated subalgebra of any algebra
$\frA\in\clC$ has at most $n$ elements for a fixed finite $n$, then the $k$-generated free algebra of $\clK$ is finite. In terms of modal logic, we have:

\begin{theorem}[Corollary of \cite{Malcev73}]\label{thm:Maltsev}
Let $L$ be the logic of a class $\clF$ of $\Al$-frames.
\begin{enumerate}[\normalfont (1)]
\item \label{thm:Maltsev-kLF} $L$ is locally tabular iff $\{\Alg{F}\mid F\in\clF\}$ is uniformly locally finite.
\item \label{thm:Maltsev-kfin}
Let $k<\omega$. $L$ is $k$-finite iff there exists $n<\omega$ such that
for every $\frF\in \clF$, the cardinality of every subalgebra of $\Alg{F}$ generated by $k$ elements
is not greater than $n$.
\end{enumerate}
\end{theorem}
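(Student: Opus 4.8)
The plan is to read the statement off directly from Maltsev's criterion once the right generating class of algebras has been fixed. Set $\clC=\{\Alg{\frF}\mid \frF\in\clF\}$ and let $\clK$ be the variety of modal algebras generated by $\clC$. The first thing I would establish is that $\clK$ is exactly the variety of $L$-algebras. Indeed, a formula is valid in a frame iff it is valid in its complex algebra, so from $L=\Log{\clF}$ we get $L=\Log{\clC}$; and an equation $s=t$ in the signature of modal algebras holds in a member of $\clC$ iff $(s\lra t)=1$ does, i.e.\ iff the corresponding formula belongs to $\Log{\clC}=L$. Hence the identities valid throughout $\clC$ are precisely those determined by $L$, so by Birkhoff's theorem the variety they generate, $\clK$, coincides with the class of all $L$-algebras. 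This identification is the standard algebraic completeness fact cited from \cite[Section~7.5]{CZ}.

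For part \eqref{thm:Maltsev-kLF} I would then simply chain the equivalences. By the algebraic reformulation of local tabularity recalled above, $L$ is locally tabular iff the variety of its algebras, namely $\clK$, is locally finite. By Maltsev's theorem, local finiteness of the variety $\clK$ generated by $\clC$ is equivalent to uniform local finiteness of $\clC$. Combining the two gives that $L$ is locally tabular iff $\clC=\{\Alg{\frF}\mid \frF\in\clF\}$ is uniformly locally finite, which is the assertion.

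For part \eqref{thm:Maltsev-kfin} I would use that $L$ is $k$-finite iff the $k$-generated free algebra $\Lind{\vL}{k}$ of $L$ is finite, and that this is the $k$-generated free algebra of the variety $\clK$. For the forward direction, suppose $\Lind{\vL}{k}$ is finite and put $n=|\Lind{\vL}{k}|$. Any subalgebra $B$ of some $\Alg{\frF}$ generated by $k$ elements is a $k$-generated member of $\clK$, hence a homomorphic image of the free algebra $\Lind{\vL}{k}$; therefore $|B|\leq n$, giving the required uniform bound. For the converse, the existence of such an $n$ is precisely the hypothesis of the ``moreover'' clause of Maltsev's theorem applied to $\clC$: a bound $n$ on the size of every $k$-generated subalgebra of every member of $\clC$ forces the $k$-generated free algebra of $\clK$, i.e.\ $\Lind{\vL}{k}$, to be finite, so $L$ is $k$-finite.

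The proof is essentially a dictionary translation, so there is no deep obstacle; the one point that needs care is feeding Maltsev's criterion the correct class. The criterion speaks of a generating class $\clC$ and the variety $\clK$ it generates, and it is crucial that the uniform bounds in the conclusion are asserted for the concrete complex algebras $\Alg{\frF}$ (the members of $\clC$) rather than for arbitrary $L$-algebras; this is exactly why the first step---verifying that $\clK$ is the variety of $L$-algebras and that $\clC$ generates it---is what makes the corollary go through.
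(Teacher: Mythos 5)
Your proposal is correct and follows the same route the paper takes: the paper presents this theorem as an immediate corollary of Maltsev's criterion applied to the class $\clC=\{\Alg{\frF}\mid \frF\in\clF\}$, relying on the identification of the variety generated by $\clC$ with the variety of $L$-algebras and on the free-algebra characterizations of local tabularity and $k$-finiteness. You have simply made explicit the details (Birkhoff's theorem, the homomorphic-image bound via $\Lind{L}{k}$, and the ``moreover'' clause for the converse of part (2)) that the paper leaves implicit.
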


\subsection{Local tabularity, pretransitivity, and finite height}
In this subsection we discuss the criterion of local tabularity for unimodal logics of transitive frames given
in \cite{Seg_Essay,Maks1975}, and its corollaries for all modal logics \cite{CZ}, \cite{LocalTab16AiML}.

A poset is of {\em height} $h<\omega$,
if it contains a chain of $h$ elements and no chains of more than $h$ elements.

Recall that in an $\Al$-frame $\frF=(X,(R_\Di)_{\Di\in \Al})$, $R_\frF=\bigcup_{\Di\in \Al} R_\Di$.
A {\em cluster} in an $\Al$-frame $\frF$ is an equivalence class with respect to the relation $\sim_\frF\; =\{(a,b)\mid a R^*_\frF b \text{ and } b R^*_\frF a\}$. 
For clusters $C, D$, put $C\leq_\frF D$ iff $a R^*_\frF b$ for some $a\in C, b\in D$.
The poset $(X{/}{\sim_\frF},\leq_\frF)$ is called the {\em skeleton of} $\frF$.
The {\em height of a frame} $\frF$ 
is the height of its skeleton.

Consider the following formulas:
\begin{equation}
B_0=\bot,  \quad B_{h} = p_{h} \to  \Box(\Di p_{h} \lor B_{h-1}).
\end{equation}

In the unimodal transitive case, the following holds \cite{Seg_Essay}:
\begin{equation}\label{eq:fin-ht-trans}
(X,R)\mo B_h \text{ iff } \text{the height of $(X,R)$ is not greater than $h$}.
\end{equation}

A unimodal logic is called {\em transitive}, if it contains the formula $\Di\Di p\imp \Di p$, which validity in a frame $(X,R)$ means transitivity of $R$.
The following criterion of local tabularity for unimodal transitive logics
was given in \cite{Seg_Essay,Maks1975} (see also \cite[Section 12.4]{CZ}):
\begin{equation}\label{eq:thm-seg-maks}
\text{ A transitive logic is locally tabular iff it contains $B_h$ for some $h< \omega$.}
\end{equation}
In \cite{LocalTab16AiML}, this criterion was extended to unimodal logics
of relations such that $R^{m+1}\subseteq R\cup R^0$ for a fixed positive $m$: as well as in the transitive case, local tabularity is equivalent to finite height for such logics.
However, an axiomatic criterion of local tabularity for polymodal logics, and even for all unimodal logics is unknown.

Finite height and some weaker version of transitivity is a necessary, but in general not a sufficient condition for
local tabularity.  Namely, for a binary relation $R$, put
$R^{\leq m}=  \bigcup_{i \leq m} R^i$.
$R$ is called {\em $m$-transitive},  if $R^{\leq m}=R^*$.
$R$ is called {\em pretransitive}, if it is $m$-transitive for some finite $m$.
It is straightforward that
\begin{equation}
\text{$R$ is $m$-transitive iff
$R^{m+1}\subseteq R^{\leq m}$.  }
\end{equation}

Recall that
$\Di^{\leq m} \vf =
\bigvee_{i\leq m} \Di^i \vf$, $\Box^{\leq m} \vf =\neg \Di^{\leq m} \neg \vf$,
and $\DiAl\vf$ denotes $\bigvee_{\Di\in\Al}\Di \vf$.
For any $\Al$-frame $\frF$, we have the following equivalence \cite[Section 3.4]{KrachtBook}:
\begin{equation}\label{pretr:sem}
\text{
$R_\frF$ is $m$-transitive iff
$\frF\mo \DiAl^{m+1} p \imp \DiAl^{\leq m} p$. }
\end{equation}

A logic $\vL$ is said to be {\em $m$-transitive}, if
$\vL\vd \Di_\AlA^{m+1} p \imp \Di_\AlA^{\leq m} p$.
$\vL$ is {\em pretransitive} if it is $m$-transitive for some $m\geq 0$.


Put
$$
B_0^{[m]} =\bot,\quad
B^{[m]}_{h} = p_{h} \to  \Box_\AlA^{\leq m} (\Di_\AlA^{\leq m} p_{h} \lor B^{[m]}_{h-1}).
$$

Assume that $\frF=(X,(R_i)_{i<n})$ is $m$-transitive. In this case,  the modal operator $\Di_\AlA^{\leq m}$  relates to $R^*_\frF$.
Observe that
the height of $\frF$  is the height of the preorder $(X,R^*_\frF)$. Hence from
\eqref{eq:fin-ht-trans} we obtain:
\begin{equation}\label{eq:heigth-m}
\text{$\frF\mo B^{\leq m}_h$  iff  the height of $\frF$ is not greater than $h$.}
\end{equation}

 For a unimodal formula  $\vf$, let
 $\vf^{[m]}$ be the formula obtained from $\vf$ by replacing each occurrence of $\Di$ in $\vf$ with $\Di_\AlA^{\leq m}$. The following fact follows from \cite[Lemma 1.3.45]{ShefSkvGab}.

\begin{proposition}\label{prop:fragmS4}
For a pretransitive logic $\vL$, the set $\{\vf\mid \vL\vd \vf^{[m]} \}$ is a transitive logic.
\end{proposition}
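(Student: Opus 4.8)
The plan is to verify directly that $L^{[m]}:=\{\vf\mid \vL\vd \vf^{[m]}\}$ satisfies the definition of a normal unimodal logic, and then that it contains $\Di\Di p\imp\Di p$ (recall $\vL$ is $m$-transitive). The key observation is that $(\cdot)^{[m]}$ commutes with the Boolean connectives and sends each occurrence of $\Di$ to the polymodal term $\Di_\AlA^{\leq m}$; so it suffices to check that, provably in $\vL$, the operator $\Di_\AlA^{\leq m}$ behaves like a normal diamond and, crucially, like a \emph{transitive} one.

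For normality, I would translate each defining condition. Since $(\cdot)^{[m]}$ preserves Boolean structure, $\vf^{[m]}$ is a classical tautology whenever $\vf$ is, so $L^{[m]}$ contains all tautologies; and closure under modus ponens is immediate from $(\vf\imp\psi)^{[m]}=\vf^{[m]}\imp\psi^{[m]}$ together with modus ponens in $\vL$. The axioms $\neg\Di\bot$ and $\Di(p_0\vee p_1)\imp \Di p_0\vee\Di p_1$ and the monotonicity rule translate, respectively, into the facts that $\vL\vd\neg\Di_\AlA^{\leq m}\bot$, that $\Di_\AlA^{\leq m}$ provably distributes over $\vee$, and that $\Di_\AlA^{\leq m}$ is provably monotone; all three hold in any normal logic because $\Di_\AlA^{\leq m}=\bigvee_{i\le m}\Di_\AlA^i$ is built from the normal, additive, monotone operators $\Di\in\AlA$ by composition and disjunction (for the first, $\neg\Di\bot\in\vL$ for each $\Di$ gives $\vL\vd\neg\Di_\AlA^i\bot$ for $i\ge 1$, and the $i=0$ disjunct $\bot$ drops out). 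The one point needing an induction is closure under substitution: for a unimodal substitution $\sigma$, let $\sigma^{[m]}$ be the polymodal substitution with $\sigma^{[m]}(p)=(\sigma p)^{[m]}$; then $(\sigma\vf)^{[m]}=\sigma^{[m]}(\vf^{[m]})$ by induction on $\vf$ (the modal step using that $\sigma^{[m]}$ commutes with the term $\Di_\AlA^{\leq m}$), so $\vL\vd\vf^{[m]}$ and substitution-closure of $\vL$ give $\vL\vd(\sigma\vf)^{[m]}$, i.e. $\sigma\vf\in L^{[m]}$.

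The substantive step is transitivity: I must show $\Di\Di p\imp\Di p\in L^{[m]}$, that is, $\vL\vd\Di_\AlA^{\leq m}\Di_\AlA^{\leq m}p\imp\Di_\AlA^{\leq m}p$. First, distributing $\Di_\AlA^{\leq m}$ over the inner disjunction and using $\Di_\AlA^i\Di_\AlA^j p\lra\Di_\AlA^{i+j}p$, one gets $\vL\vd\Di_\AlA^{\leq m}\Di_\AlA^{\leq m}p\lra\Di_\AlA^{\leq 2m}p$, since every $k\le 2m$ is $i+j$ for some $i,j\le m$. So it remains to prove $\vL\vd\Di_\AlA^{\leq 2m}p\imp\Di_\AlA^{\leq m}p$, for which the only new ingredient is $m$-transitivity, $\vL\vd\Di_\AlA^{m+1}p\imp\Di_\AlA^{\leq m}p$. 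I would upgrade this to $\vL\vd\Di_\AlA^{N}p\imp\Di_\AlA^{\leq m}p$ for every $N\ge m+1$ by induction on $N$: applying $\Di_\AlA$ (monotonicity) to the inductive hypothesis gives $\Di_\AlA^{N+1}p\imp\bigvee_{1\le j\le m+1}\Di_\AlA^{j}p$, and re-applying $m$-transitivity to the single summand $\Di_\AlA^{m+1}p$ collapses the right-hand side into $\Di_\AlA^{\leq m}p$. Together with the trivial $\Di_\AlA^{N}p\imp\Di_\AlA^{\leq m}p$ for $N\le m$, this yields $\vL\vd\Di_\AlA^{\leq 2m}p\imp\Di_\AlA^{\leq m}p$, hence transitivity of $L^{[m]}$.

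I expect this last induction to be the main (though routine) obstacle, as it is where pretransitivity is actually used; everything else reduces to the fact that $(\cdot)^{[m]}$ is a Boolean-preserving translation interpreting the unimodal diamond as the provably-normal operator $\Di_\AlA^{\leq m}$. Semantically the transitivity step is transparent — $m$-transitivity says $R_\frF^{\leq m}=R_\frF^*$, so $\Di_\AlA^{\leq m}$ is the closure operator of a preorder — but I would give the syntactic argument above so as not to assume Kripke completeness of $\vL$.
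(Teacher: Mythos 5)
Your proof is correct. Note that the paper itself gives no proof of this proposition: it is stated as a consequence of a cited result (Lemma 1.3.45 of Gabbay--Shehtman--Skvortsov), so your argument is not so much an alternative route as a self-contained reconstruction of what that citation hides. The decomposition you chose is the natural one, and all the individual steps check out: the translation $(\cdot)^{[m]}$ commutes with Booleans and with substitution (your identity $(\sigma\vf)^{[m]}=\sigma^{[m]}(\vf^{[m]})$ is the right way to handle substitution closure); the operator $\Di_\AlA^{\leq m}=\bigvee_{i\leq m}\Di_\AlA^{i}$ is provably normal because it is a finite disjunction of compositions of normal diamonds, with the $i=0$ disjunct handled as you say; and the substantive content is exactly the two facts $\vL\vd\Di_\AlA^{\leq m}\Di_\AlA^{\leq m}p\lra\Di_\AlA^{\leq 2m}p$ and $\vL\vd\Di_\AlA^{N}p\imp\Di_\AlA^{\leq m}p$ for $N\geq m+1$, the latter by your induction that applies monotonicity of $\Di_\AlA$ and then collapses the single offending disjunct $\Di_\AlA^{m+1}p$ via the pretransitivity axiom. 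Your decision to keep the argument syntactic rather than semantic is the right one, since the proposition is used in the paper for arbitrary (possibly Kripke-incomplete) pretransitive logics. What your write-up buys over the paper's citation is precisely this explicitness; what it costs is length, which is why the paper outsources it.
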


Assume that $\vL$ is 1-finite. Then its 1-canonical frame is finite. Every finite frame is $m$-transitive for some $m$. Since $m$-transitivity is expressed by a formula in one variable,
by the Canonical Model Theorem (Proposition \ref{prop:k-canonical-model}) we obtain $\vL\vd \Di_\AlA^{m+1} p \imp \Di_\AlA^{\leq m} p$.\footnote{This observation is most probably a folklore; it is mentioned
in \cite[Chapter 7]{CZ} as an exercise.}

Moreover, if $\vL$ is 1-finite, then $\vL$ contains an axiom of finite height \cite{LocalTab16AiML}. Indeed, by Proposition \ref{prop:fragmS4}, the set $^{[m]}\vL=\{\vf\mid \vL\vd \vf^{[m]} \}$ is a transitive logic.
Since $\vL$ is 1-finite, the logic $^{[m]}\vL$ is also 1-finite.
The following result was obtained in \cite[Theorem 1]{Maksimova89}:
\begin{equation}\label{eq:max-on-1-fin}
\text{ A unimodal transitive logic is locally tabular iff
it is 1-finite. }
\end{equation}
Hence,  $^{[m]}\vL$ is locally tabular, and by \eqref{eq:thm-seg-maks},
$^{[m]}\vL\vd B_h$ for some $h<\omega$.
It follows that $\vL\vd B_h^{[m]}.$

So we have the following necessary condition of 1-finiteness:
\begin{theorem}\label{thm:1-finite-to-m-h}\cite{LocalTab16AiML}
If a logic $L$ is 1-finite, then
for some $h,m<\omega$, $L\vd \Di_\AlA^{m+1} p \imp \Di_\AlA^{\leq m} p$ and
$L\vd B_h^{[m]}.$
\end{theorem}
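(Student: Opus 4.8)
The plan is to assemble Theorem \ref{thm:1-finite-to-m-h} from the chain of facts developed immediately above its statement, so the proof is essentially a matter of stringing together already-established implications in the correct order. First I would establish $m$-transitivity: assuming $L$ is 1-finite, the 1-canonical model of $L$ is finite by the characterization of 1-finiteness in terms of the 1-canonical frame. Every finite frame is $m$-transitive for some finite $m$, since on a finite set the transitive reflexive closure $R^*_{\frF}$ stabilizes, i.e.\ $R^{\leq m}_{\frF}=R^*_{\frF}$ for $m$ large enough. Because $m$-transitivity of $R_{\frF}$ is expressed by the one-variable formula $\Di_{\AlA}^{m+1} p \imp \Di_{\AlA}^{\leq m} p$ (equivalence \eqref{pretr:sem}), and this formula is valid in the finite 1-canonical model, the Canonical Model Theorem (Proposition \ref{prop:k-canonical-model}) yields $L \vd \Di_{\AlA}^{m+1} p \imp \Di_{\AlA}^{\leq m} p$. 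This gives the first conclusion and shows $L$ is $m$-transitive, hence pretransitive.

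Next I would pass to the transitive companion logic $^{[m]}L = \{\vf \mid L \vd \vf^{[m]}\}$, where $\vf^{[m]}$ replaces each $\Di$ by $\Di_{\AlA}^{\leq m}$. By Proposition \ref{prop:fragmS4} this set is a transitive (unimodal) logic, the point being that under $m$-transitivity the operator $\Di_{\AlA}^{\leq m}$ behaves like the $\Di$ of a genuinely transitive frame, namely it tracks the preorder $R^*_{\frF}$. The key transfer step is to observe that $^{[m]}L$ inherits 1-finiteness from $L$: since the translation $\vf \mapsto \vf^{[m]}$ embeds the one-variable fragment of $^{[m]}L$ into the one-variable fragment of $L$, and the latter has only finitely many nonequivalent formulas, so does the former. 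I would then invoke Maksimova's result \eqref{eq:max-on-1-fin}, that a unimodal transitive logic is locally tabular iff it is 1-finite, to conclude that $^{[m]}L$ is locally tabular.

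Finally, applying the Segerberg--Maksimova criterion \eqref{eq:thm-seg-maks}, local tabularity of the transitive logic $^{[m]}L$ gives $^{[m]}L \vd B_h$ for some $h<\omega$. Unwinding the definition of $^{[m]}L$, the derivability $^{[m]}L \vd B_h$ means precisely $L \vd B_h^{[m]}$, which is the second conclusion. This completes the argument.

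I expect the main obstacle to be the transfer of 1-finiteness from $L$ to the companion logic $^{[m]}L$, i.e.\ verifying that the finitely-many-nonequivalent-formulas property survives the $[m]$-translation. One must check that if $\vf$ and $\psi$ are $^{[m]}L$-equivalent one-variable formulas then $\vf^{[m]}$ and $\psi^{[m]}$ are $L$-equivalent (so nonequivalence classes do not collapse in a way that would let $^{[m]}L$ have infinitely many while $L$ has finitely many), and conversely that the translation respects the logical structure enough that a bound on $L$'s one-variable fragment bounds that of $^{[m]}L$; the definition of $^{[m]}L$ as $\{\vf \mid L\vd\vf^{[m]}\}$ makes this essentially bookkeeping about the translation commuting with the connectives, but it is the one place where genuine verification rather than citation is needed. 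The remaining steps are direct citations of Propositions \ref{prop:k-canonical-model} and \ref{prop:fragmS4} and of the equivalences \eqref{pretr:sem}, \eqref{eq:max-on-1-fin}, and \eqref{eq:thm-seg-maks}.
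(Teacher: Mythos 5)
Your proposal is correct and follows essentially the same route as the paper: finiteness of the 1-canonical frame gives $m$-transitivity and hence the pretransitivity axiom via the Canonical Model Theorem, then Proposition \ref{prop:fragmS4}, the transfer of 1-finiteness to $^{[m]}L$, Maksimova's equivalence \eqref{eq:max-on-1-fin}, and the Segerberg--Maksimova criterion \eqref{eq:thm-seg-maks} yield $L\vd B_h^{[m]}$. The one step you flag as needing verification (that $^{[m]}L$ inherits 1-finiteness) is also the step the paper asserts without elaboration, so your account matches the paper's level of detail.
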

Unlike in the unimodal transitive case, in general the formulas in the above theorem do not imply local tabularity: there is a unimodal 2-transitive logic containing $B^{[2]}_1$ that
is not locally tabular \cite{Byrd78}, and even not 1-finite \cite{Makinson81}.

The equivalence \eqref{eq:max-on-1-fin} also does not transfer for the non-transitive case:
in Section \ref{subs:k-fin-and-subf}, we give examples of 1-finite but not locally tabular modal logics.

\subsection{Locally finite modal algebras and partitions of frames}


A {\em partition} $\clV$ of a set $X$ is a family
of non-empty pairwise disjoint sets such that $X=\bigcup \clV$.
Equivalently, a partition is the quotient set of $X$ by an equivalence; this equivalence is denoted by
$\sim_\clV$.
A partition $\clU$ {\em refines} $\clV$, if each element of $\clV$ is the union of some elements of $\clU$,  equivalently, $\sim_\clU\;\subseteq \;\sim_\clV$.
For a family $\clS$ of subsets of $X$, the {\em partition induced by} $\clS$
is the quotient set
$X/{\sim}$, where $$a\sim b \text{ iff }  \AA Y\in\clS \, (a\in Y \Leftrightarrow b\in Y).$$

\begin{definition}
\label{def:tune}
Let $X$ be a set, $R$ a binary relation on $X$.
A partition $\clU$ of $X$ is said to be {\em $R$-tuned}, if for every $U,V\in \clU$,
\begin{equation}\label{eq:part}
\EE a\in U  \EE b\in V \, (aR b)  \,\Rightarrow\, \AA a\in U  \EE b\in V \, (aR b).
\end{equation}
Let $\frF=(X,(R_\Di)_{\Di\in \Al})$ be an $\Al$-frame.
A partition $\clU$ of $X$ is said to be {\em tuned in $\frF$}, if it is $R_\Di$-tuned for every $\Di\in \Al$.
The frame $\frF$ is said to be {\em tunable}, if for
 every finite partition $\clV$ of $\frF$ there exists a finite tuned refinement $\clU$ of $\clV$.
A class $\clF$ of $\Al$-frames is called {\em $f$-tunable} for a function $f:\omega\to\omega$, if
 for every $\frF\in\clF$, for every finite partition $\clV$ of $F$ there exists a refinement
$\clU$ of $\clV$ such that $|\clU|\leq f(|\clV|)$ and $\clU$ is tuned in $\frF$.
A class $\clF$ is \em{uniformly tunable}, if it is $f$-tunable for some $f:\omega\to\omega$.
\end{definition}


In \cite{Franz-Bull},
tuned partitions were used in a proof of Bull's theorem (see the definition of
{\em Franzen’s  filtration} in \cite{Franz-Bull}).
In \cite[Corollary 3.3]{Blok1980}, it was observed that a tuned partition on a frame $F$ induces a subalgebra of the algebra
$\Alg{F}$; see Proposition \ref{prop:lfViaTuned-basic} below.

In \cite{LocalTab16AiML}, it was shown that
a modal logic is locally tabular iff it is the logic of a uniformly tunable class of frames; below we explain how this result follows from  Maltsev's criterion, Theorem \ref{thm:Maltsev}.

It is immediate from the above definition that
\begin{equation}\label{eq:tuned-alternative}
\textstyle \text{$\clU$ is $R$-tuned} \text{ iff } \AA V\in\clU\,\EE \clS\subseteq \clU \; (R^{-1}[V]=\bigcup \clS).
\end{equation}

\begin{proposition}\label{prop:lfViaTuned-basic}
Let $\clU$ be a partition of a frame $\frF$. Then $\clU$ is tuned in $\frF$ iff the family
$\{\bigcup \clS\mid \clS\subseteq \clU\}$ forms a subalgebra of the modal algebra $\Alg{\frF}$.
\end{proposition}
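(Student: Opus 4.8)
The plan is to split the statement into its Boolean part and its modal part. Write $\clA=\{\bigcup\clS\mid \clS\se \clU\}$ for the family in question. Since $\clU$ is a partition of $X=\dom\frF$, the family $\clA$ is automatically closed under arbitrary unions, under complements (the complement of $\bigcup\clS$ is $\bigcup(\clU\setminus\clS)$), and contains $\emp$ and $X$; so $\clA$ is a Boolean subalgebra of $\Alg{\frF}$ with no hypothesis on $\frF$ whatsoever. Hence $\clA$ is a subalgebra of the \emph{modal} algebra $\Alg{\frF}$ if and only if $\clA$ is closed under each operation $f_\Di$, $\Di\in\Al$, and it suffices to prove that $\clU$ is tuned in $\frF$ iff $f_\Di(Y)\in\clA$ for every $\Di$ and every $Y\in\clA$.

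First I would record the two facts that drive the equivalence. One is distributivity: for any relation $R$ and any family $\{Y_i\}$ of subsets, $R^{-1}[\bigcup_i Y_i]=\bigcup_i R^{-1}[Y_i]$, so each $f_\Di$ preserves arbitrary unions. The other is the alternative description of tunedness already given in \eqref{eq:tuned-alternative}: $\clU$ is $R_\Di$-tuned iff for every block $V\in\clU$ the set $R_\Di^{-1}[V]=f_\Di(V)$ is a union of blocks, that is, $f_\Di(V)\in\clA$. Combining these yields the reformulation that is the crux of the argument: $\clU$ is tuned in $\frF$ iff $f_\Di(V)\in\clA$ for every $\Di\in\Al$ and every block $V\in\clU$.

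With this in hand both directions are immediate. For the forward implication, if $\clA$ is a subalgebra then each block $V\in\clU$ belongs to $\clA$ (take $\clS=\{V\}$), so $f_\Di(V)\in\clA$ for all $\Di$ and all $V$, and the reformulation gives tunedness. For the converse, assume $\clU$ is tuned, fix $\Di$, and take an arbitrary $Y=\bigcup\clS\in\clA$ with $\clS\se\clU$; by distributivity $f_\Di(Y)=\bigcup_{V\in\clS}f_\Di(V)$, each summand lies in $\clA$ by the reformulation, and $\clA$ is closed under unions, so $f_\Di(Y)\in\clA$. Thus $\clA$ is closed under every $f_\Di$ and, being already a Boolean subalgebra, is a subalgebra of $\Alg{\frF}$.

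The argument is a direct unfolding of the definitions, so I do not expect a serious obstacle; the only step requiring care is the reduction from closure of $\clA$ under $f_\Di$ to the single-block condition, and this is exactly what the distributivity of $f_\Di$ over unions, together with \eqref{eq:tuned-alternative}, delivers.
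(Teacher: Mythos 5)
Your proposal is correct and follows essentially the same route as the paper's proof: both reduce the statement to closure under each $f_\Di$ via the reformulation \eqref{eq:tuned-alternative} of tunedness as $R_\Di^{-1}[V]$ being a union of blocks, and both use distributivity of $f_\Di$ over unions to pass between single blocks and arbitrary elements of the family. No issues.
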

\begin{proof}
Let $\clT=\{\bigcup \clS\mid \clS\subseteq \clU\}$,  $R$ one of the relations of $F$.

Assume that $\clU$ is tuned in $F$. Clearly,
$\clT$ is closed under the Boolean operations.
For $\clS \subseteq \clU$, we have $R^{-1}[\bigcup \clS]=\bigcup \{R^{-1}[V]\mid V\in \clS\}$, so it follows from  \eqref{eq:tuned-alternative} that $R^{-1}[\bigcup \clS]\in\clT$. Hence,
$\clT$  is closed under operations of $\Alg{F}$.

Conversely, assume that $\clT$  forms a subalgebra of $\Alg{F}$. Then for every $V\in \clU$,
$R^{-1}[V]\in\clT$.  Hence, $R^{-1}[V]$ is the union of some elements of $\clU$. By  \eqref{eq:tuned-alternative}, $\clU$ is $R$-tuned.
\end{proof}

\begin{proposition}\label{prop:betweenSizeOfAlgAndPart}
Let $k<\omega$, $\clV$ the partition induced by subsets $P_0,\ldots, P_{k-1}$ of a frame $F$,
$B$ the subalgebra of $\Alg{F}$ generated by these sets.
\begin{enumerate}[\normalfont (1)]
  \item \label{prop:SizeOfAlg-To-Part} If $B$ is finite, then the partition $\clU$ of $F$ induced by the elements of $B$ is a tuned refinement of $\clV$.
  \item \label{prop:SizePartToOfAlg}
If $\clU$ is a tuned refinement of $\clV$, then  every element of $B$ is the union of some elements of $\clU$.
\end{enumerate}
\end{proposition}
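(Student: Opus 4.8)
The plan is to reduce both parts to Proposition~\ref{prop:lfViaTuned-basic}, which sets up a correspondence between tuned partitions and subalgebras of $\Alg{F}$ via the map $\clU\mapsto\clT_\clU:=\{\bigcup\clS\mid\clS\subseteq\clU\}$. Under this correspondence, the two directions I must prove become, respectively, the assertion that the atoms of the finite algebra $B$ form a tuned partition, and the assertion that a tuned refinement of $\clV$ gives rise to a subalgebra containing all the generators $P_i$.

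For part~\eqref{prop:SizeOfAlg-To-Part}, I would first record that $\clU$ refines $\clV$: since $P_0,\dots,P_{k-1}\in B$, two points that agree on membership in every element of $B$ agree in particular on each $P_i$, so $\sim_\clU\,\subseteq\,\sim_\clV$. Next, because $B$ is finite it is atomic, and the cell of a point $a$ in $\clU$, which is $\bigcap\{Y\in B\mid a\in Y\}$, coincides with the atom of $B$ containing $a$; conversely every atom of $B$ arises in this way. Hence the cells of $\clU$ are exactly the atoms of $B$, and since each element of a finite Boolean algebra is the union of the atoms contained in it, $\clT_\clU=B$ as a subset of $\Alg{F}$. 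As $B$ is a subalgebra of $\Alg{F}$, Proposition~\ref{prop:lfViaTuned-basic} yields that $\clU$ is tuned; together with the refinement observed above, this shows $\clU$ is a tuned refinement of $\clV$.

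For part~\eqref{prop:SizePartToOfAlg}, I would apply Proposition~\ref{prop:lfViaTuned-basic} in the other direction: since $\clU$ is tuned, $\clT_\clU$ is a subalgebra of $\Alg{F}$. It then suffices to check that $P_0,\dots,P_{k-1}\in\clT_\clU$, for then the subalgebra $B$ generated by these sets satisfies $B\subseteq\clT_\clU$, i.e.\ every element of $B$ is a union of cells of $\clU$. To see $P_i\in\clT_\clU$, note that each $P_i$ is a union of cells of $\clV$ (any $\clV$-cell is, by definition of the induced partition, either contained in or disjoint from $P_i$), and since $\clU$ refines $\clV$ each such $\clV$-cell is itself a union of cells of $\clU$; hence $P_i$ is a union of cells of $\clU$.

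The only genuinely substantive point, and the one I would be most careful about, is the identification $\clT_\clU=B$ in part~\eqref{prop:SizeOfAlg-To-Part}: it relies on the finiteness (hence atomicity) of $B$ and on verifying that the cells of the partition induced by $B$ are precisely its atoms. Everything else is an immediate unwinding of the definitions of \emph{refinement} and \emph{induced partition} together with a single invocation of Proposition~\ref{prop:lfViaTuned-basic}.
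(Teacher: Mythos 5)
Your proposal is correct and follows essentially the same route as the paper: both parts are reduced to Proposition~\ref{prop:lfViaTuned-basic}, with part~(1) resting on the identification of the cells of the induced partition with the atoms of the finite algebra $B$, and part~(2) on observing that the generators $P_i$ lie in the subalgebra $\{\bigcup\clS\mid\clS\subseteq\clU\}$. The only difference is that you spell out details (atomicity of $B$, why each $P_i$ is a union of $\clV$-cells) that the paper leaves implicit.
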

\begin{proof}
\eqref{prop:SizeOfAlg-To-Part}
All sets $P_i$ belong to $B$,  so $\sim_\clU\;\subseteq \;\sim_\clV$, and so $\clU$ refines $\clV$.
Since $B$ is finite, the partition $\clU$ is the set of atoms of $B$ (here $B$ considered as a Boolean algebra), and the domain of $B$ is the family $\{\bigcup \clS\mid \clS\subseteq \clU\}$. By Proposition \ref{prop:lfViaTuned-basic}, $\clU$ is tuned.

\eqref{prop:SizePartToOfAlg} By Proposition \ref{prop:lfViaTuned-basic},
the family $\{\bigcup \clS\mid \clS\subseteq \clU\}$ forms
a subalgebra $C$ of $\Alg{\frF}$.
Since $\clU$ refines $\clV$, all $P_i$ are in $C$, and hence
$C$ contains $B$.
\end{proof}

Let $k<\omega$.
For a $k$-valuation $\theta$ in an $\Al$-frame $F$, let
$\sim_\theta$ be the equivalence on $\dom{F}$ induced by all $k$-formulas
in the model $(F,\theta)$. In terms of Kripke models, $\sim_\theta$ is the set of pairs $(a,b)\in \dom{F}\times \dom{F}$ such that
for
every $k$-formula  $\vf$,
$$
(F,\theta), a\mo \vf \Leftrightarrow (F,\theta), b\mo \vf.
$$

\begin{proposition}\label{prop:partIn-k-model}
If the algebra $\Alg{F}$ is locally finite, then
for every $k<\omega$ and every $k$-valuation $\theta$ in $F$, the partition $\dom{F}{/}{\sim_\theta}$ is finite and tuned in $F$.
\end{proposition}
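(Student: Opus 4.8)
The plan is to reduce the statement to the dictionary between definable subsets and finitely generated subalgebras of $\Alg{F}$ established in Propositions \ref{prop:lfViaTuned-basic} and \ref{prop:betweenSizeOfAlgAndPart}. First I would fix the notation $P_i=\theta(p_i)$ for $i<k$ and let $B$ be the subalgebra of $\Alg{F}$ generated by $P_0,\ldots,P_{k-1}$. Since $B$ is generated by $k<\omega$ elements and $\Alg{F}$ is assumed locally finite, $B$ is finite; this is the only place where the hypothesis is used.

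The central step is to recognize that $\dom{F}/{\sim_\theta}$ is exactly the partition induced by the elements of $B$. For this I would use the fact, recalled after the definition of the complex algebra, that for every $k$-formula $\vf$ the set $\vext(\vf)$ is the value of $\vf$ in $\Alg{F}$ under the assignment $p_i\mapsto P_i$. Consequently the sets $\vext(\vf)$, as $\vf$ ranges over $k$-formulas, are precisely the elements of the subalgebra generated by the $P_i$, so $\{\vext(\vf)\mid \vf \text{ a } k\text{-formula}\}=B$. By the definition of $\sim_\theta$, points $a,b$ are $\sim_\theta$-equivalent iff they belong to the same sets $\vext(\vf)$, hence, by the previous equality, iff they belong to the same members of $B$; that is exactly the equivalence inducing the partition by the elements of $B$.

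Given this identification, the conclusion is immediate from Proposition \ref{prop:betweenSizeOfAlgAndPart}(\ref{prop:SizeOfAlg-To-Part}): with $B$ finite, the partition induced by the elements of $B$ is a tuned refinement of the partition $\clV$ induced by $P_0,\ldots,P_{k-1}$, so in particular it is tuned in $F$. Finiteness is equally clear, since the cells of the partition induced by a finite Boolean subalgebra are its atoms, of which there are only finitely many.

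The step I expect to require the most care is not a genuine obstacle but the bookkeeping behind the equality $\{\vext(\vf)\mid \vf \text{ a } k\text{-formula}\}=B$: one must check that $\bot$ and the variables $p_i$ supply the bottom element and the generators, and that the collection of formula-values is closed under the Boolean operations and each operator $f_\Di$, matching the inductive truth clauses for $\imp$ and $\Di$. Once this is in place, the rest is a direct appeal to the cited propositions.
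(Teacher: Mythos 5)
Your proposal is correct and follows essentially the same route as the paper's own proof: identify $\dom{F}/{\sim_\theta}$ with the partition induced by the elements of the subalgebra $B$ generated by $\theta(p_0),\ldots,\theta(p_{k-1})$, and then apply Proposition \ref{prop:betweenSizeOfAlgAndPart}\eqref{prop:SizeOfAlg-To-Part}. The only difference is that you spell out the identification $\{\vext(\vf)\mid \vf\text{ a }k\text{-formula}\}=B$, which the paper leaves implicit.
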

\begin{proof}
Consider the algebra $B$ generated in $\Alg{F}$  by the sets $\theta(p_0), \ldots, \theta(p_{k-1})$.
Then $\dom{F}{/}{\sim_\theta}$ is the partition of $F$ induced by the elements of $B$.  Now the statement follows from
Proposition \ref{prop:betweenSizeOfAlgAndPart}\eqref{prop:SizeOfAlg-To-Part}.
\end{proof}

\begin{theorem}\cite{LocalTab16AiML}\label{thm:LFviaTuned}
~\begin{enumerate}[\normalfont (1)]
  \item  \label{thm:LFviaTuned-lfA}
The algebra of a frame $\frF$ is locally finite iff $\frF$ is tunable.
\item \label{thm:LFviaTuned-lf-k-fin} Let $k<\omega$. The logic of a class  $\clF$ of $\Al$-frames is $k$-finite
iff there exists $n<\omega$ such that
for every $\frF\in\clF$, for every partition $\clV$ of $F$ with $|\clV|\leq 2^k$
there exists a finite tuned refinement $\clU$ of $\clV$ with $|\clU|\leq n$.
  \item \label{thm:LFviaTuned-lfLog}
The logic of a class  $\clF$ of $\Al$-frames is locally tabular
iff $\clF$  is uniformly tunable.
\end{enumerate}
\end{theorem}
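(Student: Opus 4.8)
All three parts follow from Maltsev's criterion (Theorem~\ref{thm:Maltsev}) once we translate between finitely generated subalgebras of $\Alg{\frF}$ and tuned refinements of partitions of $\frF$; this translation is exactly what Proposition~\ref{prop:betweenSizeOfAlgAndPart} supplies. The plan is to set up a dictionary between two quantities---the number of blocks $|\clU|$ of a tuned refinement and the cardinality $|B|$ of a generated subalgebra---and then read off each equivalence from the corresponding part of Theorem~\ref{thm:Maltsev}. Two elementary counting facts drive the bookkeeping: any $k$ subsets of $\dom{\frF}$ induce a partition with at most $2^k$ blocks, and conversely any partition with at most $2^k$ blocks is induced by some $k$ subsets (assign distinct binary codes of length $k$ to the blocks).

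For part~\eqref{thm:LFviaTuned-lfA} I would argue at the level of a single frame, with no uniform bounds. For the forward implication, given a finite partition $\clV$ of $\frF$, choose subsets inducing $\clV$; the subalgebra $B$ they generate in $\Alg{\frF}$ is finite by local finiteness, and then Proposition~\ref{prop:betweenSizeOfAlgAndPart}\eqref{prop:SizeOfAlg-To-Part} says the partition induced by the elements of $B$ is a finite tuned refinement of $\clV$, so $\frF$ is tunable. For the converse, take any subalgebra $B$ generated by finitely many sets $P_0,\dots,P_{k-1}$; these induce a finite partition $\clV$, a finite tuned refinement $\clU$ exists by tunability, and Proposition~\ref{prop:betweenSizeOfAlgAndPart}\eqref{prop:SizePartToOfAlg} forces every element of $B$ to be a union of blocks of $\clU$, whence $|B|\leq 2^{|\clU|}<\omega$; thus $\Alg{\frF}$ is locally finite.

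Part~\eqref{thm:LFviaTuned-lf-k-fin} is the quantitative refinement of this argument, matched against Theorem~\ref{thm:Maltsev}\eqref{thm:Maltsev-kfin}. If $L$ is $k$-finite, fix the bound $n$ on $k$-generated subalgebras; a partition $\clV$ with $|\clV|\leq 2^k$ is induced by $k$ sets, the generated subalgebra has at most $n$ elements, and its atoms form a tuned refinement $\clU$ with $|\clU|\leq n$. Conversely, if every partition of size $\leq 2^k$ has a tuned refinement of size $\leq n$, then a $k$-generated subalgebra $B$ sits inside the algebra of unions of blocks of such a $\clU$, giving $|B|\leq 2^{n}$; applying Theorem~\ref{thm:Maltsev}\eqref{thm:Maltsev-kfin} with bound $2^{n}$ yields $k$-finiteness. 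Part~\eqref{thm:LFviaTuned-lfLog} is obtained the same way, now feeding the uniform function of Theorem~\ref{thm:Maltsev}\eqref{thm:Maltsev-kLF} through the two counting facts: a uniform bound $g(k)$ on $k$-generated subalgebras yields the tunability function $f(s)=g(\lceil\log_2 s\rceil)$, and an $f$-tunability bound yields the subalgebra bound $g(k)=2^{f(2^k)}$.

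The work here is organizational rather than conceptual: the real content lives in Propositions~\ref{prop:lfViaTuned-basic} and~\ref{prop:betweenSizeOfAlgAndPart}, already proved, together with Maltsev's criterion. The points demanding care are purely quantitative: the asymmetry of the bounds $|\clU|\leq|B|$ and $|B|\leq 2^{|\clU|}$, the logarithmic conversion between the number of generators and the number of blocks, and---for part~\eqref{thm:LFviaTuned-lfLog}---the harmless replacement of the tunability function $f$ by a nondecreasing majorant, so that $f(|\clV|)\leq f(2^k)$ whenever $|\clV|\leq 2^k$. None of these obstruct the quantifier structure demanded by the statements, since in part~\eqref{thm:LFviaTuned-lf-k-fin} the adjustment is applied at a fixed $k$ and in part~\eqref{thm:LFviaTuned-lfLog} it is absorbed into a single function.
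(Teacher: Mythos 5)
Your proposal is correct and follows essentially the same route as the paper: both directions of each part are obtained by translating between $k$-generated subalgebras and tuned refinements via Proposition~\ref{prop:betweenSizeOfAlgAndPart}, using the binary-coding correspondence between $k$ generating sets and partitions with at most $2^k$ blocks, and feeding the resulting bounds through Maltsev's criterion (Theorem~\ref{thm:Maltsev}) for parts~\eqref{thm:LFviaTuned-lf-k-fin} and~\eqref{thm:LFviaTuned-lfLog}. The only cosmetic difference is that the paper proves part~\eqref{thm:LFviaTuned-lfA} directly from Proposition~\ref{prop:betweenSizeOfAlgAndPart} without invoking Maltsev at all, exactly as your single-frame argument in fact does despite your opening sentence.
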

\begin{proof}
\eqref{thm:LFviaTuned-lfA} The `only if' direction follows from Proposition \ref{prop:betweenSizeOfAlgAndPart}\eqref{prop:SizeOfAlg-To-Part}, the `if' direction from
Proposition \ref{prop:betweenSizeOfAlgAndPart}\eqref{prop:SizePartToOfAlg}.

\eqref{thm:LFviaTuned-lf-k-fin}
By Maltsev's criterion provided in Theorem \ref{thm:Maltsev}\eqref{thm:Maltsev-kfin},
$\Log{\clF}$ is $k$-finite iff
there exists $l<\omega$ such that
for every $\frF\in \clF$, the cardinality of every subalgebra of $\Alg{\frF}$ generated by $k$ subsets of $F$
in not greater than $l$.

Assume that $\Log{\clF}$ is $k$-finite. Let $\clV$ be a partition of $F\in\clF$
such that $|\clV|\leq 2^k$. Then there are $k$ subsets $P_0,\ldots, P_{k-1}$ that induce $\clV$
(consider an injection $f:\clV\to\clP(k)$, and put $P_i=\bigcup\{V\in\clV\mid i\in f(V)\}$). Let $B$ be the subalgebra of $\Alg{F}$ generated by these sets,
and let $\clU$ be
the partition of $F$ induced by the elements of $B$.
By Proposition \ref{prop:betweenSizeOfAlgAndPart}\eqref{prop:SizeOfAlg-To-Part}, $\clU$ is a tuned refinement of $\clV$. Since $B$ is finite, $|B|=2^{|\clU|}$.  So if $|B|\leq l$, then $2^{|\clU|}\leq l$.

For the other direction, consider a frame $F\in\clF$ and a
subalgebra $B$ of $\Alg{F}$ generated by subsets $P_0,\ldots, P_{k-1}$ of $F$.
Let $\clV$ be the partition induced by these sets; clearly, $|\clV|\leq 2^k$.
For some fixed $n$,  $\clV$ has a finite tuned refinement $\clU$ of size not greater than $n$.
By Proposition \ref{prop:betweenSizeOfAlgAndPart}\eqref{prop:SizePartToOfAlg},
$|B|\leq 2^n$. This completes the proof of the second statement.

The third statement readily follows from the second.
%
%
%
\end{proof}

In \cite{LocalTab16AiML}, this theorem was used to generalize the finite height criterion \eqref{eq:thm-seg-maks}
for certain  families of non-transitive logics.

Note that the first statement of this theorem
does not imply that $\Log(F)$ is locally tabular. However, Theorem \ref{thm:LFviaTuned}\eqref{thm:LFviaTuned-lfA}  can be a useful tool for proving the finite model property of a semantically defined logic.
For example, consider the logic of a direct power $F=(\omega,\leq)^n$ of
$(\omega,\leq)$, where $0<n<\omega$. 
This logic is not locally tabular, since $F$ is not of finite height. On the other hand,
$F$ is tunable \cite{OmegaN}, so the algebra of $F$  is locally finite, and so the logic
of $F$ has the finite model property.

\section{Local tabularity and subframes}\label{sec:subframes}

For a class $\clF$ of $\Al$-frames, put
  $$\SubFrs{\clF}=\{\frF\restr Y\mid \frF\in \clF \text{ and } Y\subseteq \dom{\frF}\}.$$

\subsection{k-finiteness and subframes}\label{subs:k-fin-and-subf}

\begin{proposition}\label{prop:LF-for-subframess}
If the logic of a class $\clF$ of $\Al$-frames is $k$-finite for some positive $k<\omega$,
then
the logic of $\SubFrs{\clF}$ is ${(k{-}1)}$-finite.
\end{proposition}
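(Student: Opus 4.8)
The plan is to use Maltsev's criterion in the form of Theorem~\ref{thm:Maltsev}\eqref{thm:Maltsev-kfin} and to treat the subset $Y$ defining the subframe as an extra, $k$-th generator. Since $\Log{\clF}$ is $k$-finite, fix $n<\omega$ such that for every $\frF\in\clF$ every subalgebra of $\Alg{\frF}$ generated by $k$ elements has at most $n$ elements. To prove that $\Log{\SubFrs{\clF}}$ is $(k{-}1)$-finite, I will show, again by Theorem~\ref{thm:Maltsev}\eqref{thm:Maltsev-kfin}, that the \emph{same} bound $n$ works for $\SubFrs{\clF}$: every subalgebra of the algebra of any subframe $\frF\restr Y$ generated by $k-1$ elements has at most $n$ elements.

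So fix $\frF\in\clF$, a subset $Y\se\dom{\frF}$, put $\frG=\frF\restr Y$, and let $A$ be the subalgebra of $\Alg{\frG}$ generated by subsets $P_0,\ldots,P_{k-2}$ of $Y$. In the ambient algebra $\Alg{\frF}$ consider the subalgebra $B$ generated by the $k$ sets $P_0,\ldots,P_{k-2},Y$ (viewed as subsets of $\dom{\frF}$). By the choice of $n$ we have $|B|\leq n$. The core of the argument is to show that the ``$Y$-trace'' $B_Y=\{Z\cap Y\mid Z\in B\}$ is a subalgebra of $\Alg{\frG}$ containing all the generators $P_i$; this yields $A\se B_Y$ and hence $|A|\leq|B_Y|\leq|B|\leq n$, since $B_Y$ is the image of $B$ under $Z\mapsto Z\cap Y$.

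That $B_Y$ contains each $P_i$ (as $P_i\se Y$) and is closed under the Boolean operations of $\Alg{\frG}$ (whose unit is $Y$) is routine, using $Y\in B$. The main point is closure under the modal operators. Here I use that for $W\se Y$ the operator $f_\Di^{\frG}$ of the subframe satisfies $f_\Di^{\frG}(W)=(R_\Di\cap(Y\times Y))^{-1}[W]=R_\Di^{-1}[W]\cap Y=f_\Di^{\frF}(W)\cap Y$. Given an element $Z\cap Y$ of $B_Y$, the key observation is that $Z\cap Y$ already lies in $B$ (because $Z\in B$ and $Y\in B$), so $f_\Di^{\frF}(Z\cap Y)\in B$, and therefore $f_\Di^{\frG}(Z\cap Y)=f_\Di^{\frF}(Z\cap Y)\cap Y\in B_Y$.

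I expect this last step to be the only real obstacle, and it is exactly the presence of $Y$ as a generator of $B$ that overcomes it: it lets one intersect with $Y$ \emph{inside} $B$ before applying the frame operator $f_\Di^{\frF}$, thereby simulating the relativized operator $f_\Di^{\frG}$. Semantically this is relativization: replacing each $\Di\psi$ by $\Di(p_{k-1}\con\psi)$ turns a $(k{-}1)$-formula evaluated on $\frG$ into a $k$-formula that behaves the same way on the $Y$-part of $\frF$, which is why exactly one extra variable is consumed. Once closure is established, the uniform bound $n$ transfers to $\SubFrs{\clF}$ and Theorem~\ref{thm:Maltsev}\eqref{thm:Maltsev-kfin} gives $(k{-}1)$-finiteness of $\Log{\SubFrs{\clF}}$.
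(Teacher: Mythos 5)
Your proof is correct. It reaches the same conclusion as the paper but through different machinery: the paper routes the argument through tuned partitions (Theorem~\ref{thm:LFviaTuned}\eqref{thm:LFviaTuned-lf-k-fin}), taking a partition $\clV$ of $Y$ with $|\clV|\leq 2^{k-1}$, adjoining the single extra block $\dom{\frF}\setminus Y$ to get a partition of $\frF$ of size at most $2^k$, extracting a tuned refinement $\clU$ of bounded size, and then restricting to the blocks contained in $Y$ (which one checks is tuned in $\frF\restr Y$). You instead work directly with Maltsev's criterion (Theorem~\ref{thm:Maltsev}\eqref{thm:Maltsev-kfin}), adjoining $Y$ as an extra \emph{generator} and showing that the trace $B_Y=\{Z\cap Y\mid Z\in B\}$ is a subalgebra of $\Alg{\frF\restr Y}$ containing the original generators; the computation $f_\Di^{\frF\restr Y}(W)=f_\Di^{\frF}(W)\cap Y$ for $W\subseteq Y$, combined with the fact that $Z\cap Y\in B$ because $Y\in B$, is exactly the right observation, and it correctly identifies why precisely one variable is consumed. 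The two arguments are dual instances of the same idea (spend one variable on $Y$, one as a partition block, one as a generator), but yours is the more purely algebraic one and bypasses the partition/tuning apparatus entirely; the paper's version has the mild advantage of staying within the combinatorial framework it reuses throughout (e.g.\ in Lemma~\ref{lem:refl-reduction-key-uni}), while yours gives a slightly cleaner quantitative statement (the same bound $n$ transfers verbatim, rather than the bound on partition sizes).
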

\begin{proof}
Via Theorem \ref{thm:LFviaTuned}.
Let $Y$ be a subset of a frame $F\in \clF$, and $\clV$ be a finite partition of $Y$, $|\clV|\leq 2^{k-1}$.
Consider the partition $\clV'=\clV\cup\{\dom{F}{\setminus} Y\}$ of $F$ (if $Y=\dom{F}$, put $\clV'=\clV$).
Then $\clV'\leq 2^k$, and since $\Log{\clF}$ is $k$-finite,  by Theorem \ref{thm:LFviaTuned}\eqref{thm:LFviaTuned-lf-k-fin}
there exists a finite tuned refinement $\clU$ of $\clV'$
with $|\clU|\leq n$, for some fixed finite $n$. Consider the partition $\clU_\prime=\{V\in\clU\mid V\subseteq Y\}$ of $Y$. It is straightforward that $\clU_\prime$ refines $\clV$, is tuned
in $F\restr Y$, and $|\clU_\prime|\leq n$. Using
Theorem \ref{thm:LFviaTuned}\eqref{thm:LFviaTuned-lf-k-fin}  again, we conclude that $\Log{\SubFrs{\clF}}$ is ${(k{-}1)}$-finite.
\end{proof}

\begin{corollary}
The logic of $\clF$ is locally tabular iff
  the logic of $\SubFrs{\clF}$ is.
\end{corollary}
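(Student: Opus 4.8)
The plan is to prove the two implications separately, with all the real content already packaged in Proposition \ref{prop:LF-for-subframess}.

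For the `if' direction, I would first observe that $\clF\subseteq\SubFrs\clF$, since taking $Y=\dom\frF$ gives $\frF\restr\dom\frF=\frF$. Enlarging a class of frames can only shrink its logic, so $\Log{\SubFrs\clF}\subseteq\Log\clF$; that is, $\Log\clF$ is an extension of $\Log{\SubFrs\clF}$. As recalled in the introduction, every extension of a locally tabular logic is locally tabular, so local tabularity of $\Log{\SubFrs\clF}$ immediately yields local tabularity of $\Log\clF$.

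For the `only if' direction, I would reduce local tabularity to $k$-finiteness for all $k$ and then apply Proposition \ref{prop:LF-for-subframess} level by level. Assume $\Log\clF$ is locally tabular, so it is $k$-finite for every $k<\omega$; in particular it is $(k{+}1)$-finite for every $k<\omega$. Applying Proposition \ref{prop:LF-for-subframess} with the positive parameter $k{+}1$, the $(k{+}1)$-finiteness of $\Log\clF$ gives $k$-finiteness of $\Log{\SubFrs\clF}$. Since this holds for every $k<\omega$, the logic $\Log{\SubFrs\clF}$ is $k$-finite at every finite level, hence locally tabular.

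There is no genuine obstacle remaining once Proposition \ref{prop:LF-for-subframess} is in hand; the only points that require a moment of care are the reindexing of the quantifier (the proposition trades $(k{-}1)$-finiteness of the subframe class against $k$-finiteness of the original, so one must run it over all positive $k$ to recover every finite level) and the two elementary facts that $\clF\subseteq\SubFrs\clF$ and that local tabularity is inherited by extensions. Both of these are already available in the text, so the corollary follows at once.
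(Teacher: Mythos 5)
Your proof is correct and follows the paper's own argument exactly: the `if' direction from $\clF\subseteq\SubFrs{\clF}$ and the fact that extensions of locally tabular logics are locally tabular, and the `only if' direction from Proposition \ref{prop:LF-for-subframess} applied at every level $k{+}1$. The only difference is that you spell out the quantifier reindexing, which the paper leaves implicit.
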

\begin{proof}
The ``if'' direction is immediate:  $\clF\subseteq \SubFrs{\clF}$, so $\Log{\SubFrs{\clF}}\subseteq \Log{\clF}$.
The ``only if'' direction follows from Proposition \ref{prop:LF-for-subframess}.
\end{proof}

An analogous statement holds for algebras. Namely, if the algebra of a frame $\frF$ is locally finite, then for
every $Y\subseteq \dom{\frF}$, the algebra of the frame $\frF\restr Y$
 is locally finite
(same argument as in the proof of Proposition \ref{prop:LF-for-subframess}, via Theorem \ref{thm:LFviaTuned}\eqref{thm:LFviaTuned-lfA}). In particular,
 it follows that all logics $\Log{\frF\restr Y}$ have the finite model property.

\begin{proposition}\label{prop:2finite-to-subs-m-n}
If the logic of a class $\clF$ of frames is 2-finite,
then there exist $m,h<\omega$ such that for every $F\in \SubFrs{\clF}$, the frame $F$ is $m$-transitive, and the height of $F$ is not greater than $h$.
\end{proposition}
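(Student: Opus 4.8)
The plan is to reduce the claim to the single-variable necessary condition recorded in Theorem \ref{thm:1-finite-to-m-h}. First I would apply Proposition \ref{prop:LF-for-subframess} with $k=2$: since $\Log{\clF}$ is 2-finite, the logic $L=\Log{\SubFrs{\clF}}$ is 1-finite. This is the step that passes from the given class to its subframes while lowering the number of variables from $2$ to $1$, and it is exactly what makes the one-variable machinery available.

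Next I would apply Theorem \ref{thm:1-finite-to-m-h} to $L$. Since $L$ is 1-finite, there exist $m,h<\omega$ with $L\vd \Di_\AlA^{m+1} p \imp \Di_\AlA^{\leq m} p$ and $L\vd B_h^{[m]}$. By definition $L$ is the set of formulas valid in every member of $\SubFrs{\clF}$, so both formulas are valid in each $F\in\SubFrs{\clF}$. From $F\mo \Di_\AlA^{m+1} p \imp \Di_\AlA^{\leq m} p$ and the equivalence \eqref{pretr:sem} we obtain that $R_F$ is $m$-transitive, that is, $F$ is $m$-transitive; crucially the same $m$ serves all $F$ at once. Having established $m$-transitivity of $F$, the equivalence \eqref{eq:heigth-m} becomes applicable, and $F\mo B_h^{[m]}$ then yields that the height of $F$ is at most $h$, again with a uniform bound $h$.

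The argument is thus a short composition of earlier results, and there is no genuine obstacle beyond sequencing them correctly. The one point requiring care is the order of the two conclusions: the characterization \eqref{eq:heigth-m} of height by $B_h^{[m]}$ is stated only under the hypothesis of $m$-transitivity, so the pretransitivity conclusion must be extracted first from \eqref{pretr:sem} and only afterwards fed into the height argument. Since Theorem \ref{thm:1-finite-to-m-h} produces a single pair $(m,h)$ for the whole logic $L$, the resulting bounds are automatically uniform over all of $\SubFrs{\clF}$, which is precisely the uniformity demanded by the statement.
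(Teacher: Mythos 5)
Your proposal is correct and follows essentially the same route as the paper: reduce to $1$-finiteness of $\Log{\SubFrs{\clF}}$ via Proposition \ref{prop:LF-for-subframess}, then combine Theorem \ref{thm:1-finite-to-m-h} with \eqref{pretr:sem} and \eqref{eq:heigth-m}. Your remark about extracting $m$-transitivity before invoking the height characterization is exactly the right point of care, and the uniformity of $(m,h)$ over $\SubFrs{\clF}$ is handled correctly.
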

\begin{proof}
If $\Log{\clF}$  is 2-finite, then logic $\Log{\SubFrs{\clF}}$ is 1-finite by Proposition
\ref{prop:LF-for-subframess}.
Now the statement follows from Theorem  \ref{thm:1-finite-to-m-h} and equivalences \eqref{pretr:sem} and \eqref{eq:heigth-m}.
\end{proof}

While in the unimodal transitive case, 1-finiteness of a modal logic implies local tabularity \eqref{eq:max-on-1-fin},
this is not true in general. The followings example was given in \cite{Glivenko2021}.
Let $\vL$ be the logic of the frame
$\frF=(\omega+1,R)$, where
$$aRb \tiff  a\leq b  \textrm{ or }a=\omega.$$
The restriction of $\frF$ to $\omega$ is the frame $(\omega,\leq)$, so $\vL$ is not 2-finite by Proposition \ref{prop:2finite-to-subs-m-n}.
On the other hand,
for every 2-element partition $\clV$ of $F$,
there exists its tuned refinement $\clU$ with $|\clU|\leq3$. Indeed, let $\clV=\{V_0,V_1\}$.
W.l.o.g., let $V_0\subseteq \omega$. It is immediate that if $V_0$ is infinite, then $\clV$ is tuned in $\frF$.
Otherwise, let $n$ be the greatest element of $V_0$,
$U_1=\{x\mid n<x<\omega\}$, $U_2$ the complement of $V_0\cup U_1$ in $F$. It is straightforward from Definition \ref{def:tune} that $\{V_0,U_1,U_2\}$ is a tuned refinement of $\clV$.
By Theorem
\ref{thm:LFviaTuned}\eqref{thm:LFviaTuned-lf-k-fin}, the logic of $F$ is 1-finite.

\hide{
This proves
\begin{theorem}\cite{Glivenko2021}
There exists a unimodal 1-finite logic which is not locally tabular.
\end{theorem}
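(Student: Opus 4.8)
The plan is to find a single frame whose logic separates the two notions, and the natural candidate is a frame that is itself of finite height but admits a subframe of infinite height; then $1$-finiteness can survive while $2$-finiteness already fails. Concretely, I would take $\frF=(\omega+1,R)$ with $aRb$ iff $a\leq b$ or $a=\omega$, and set $\vL=\Log{\frF}$. The key structural feature is that the point $\omega$ both sees and is seen by every point, so the whole frame collapses to a single cluster and has height $1$, whereas the restriction $\frF\restr\omega$ is exactly $(\omega,\leq)$, which has infinite height. This contrast is what makes $\SubFrs{\{\frF\}}$ the right object to examine.

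For the negative half (not locally tabular) I would argue that it suffices to show $\vL$ is not even $2$-finite, since local tabularity means $k$-finiteness for every $k<\omega$ and so entails $2$-finiteness in particular. Were $\vL$ $2$-finite, Proposition \ref{prop:2finite-to-subs-m-n} would supply a finite bound $h$ on the height of every frame in $\SubFrs{\{\frF\}}$; but $\frF\restr\omega=(\omega,\leq)$ lies in this class and has infinite height, a contradiction. Hence $\vL$ is not $2$-finite, and therefore not locally tabular.

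For the positive half ($1$-finiteness) I would invoke Theorem \ref{thm:LFviaTuned}\eqref{thm:LFviaTuned-lf-k-fin} with $k=1$ and exhibit the uniform bound $n=3$: every partition of $\frF$ of size at most $2$ admits a tuned refinement of size at most $3$. The computation I would set up first is that of $R^{-1}[V]$: for nonempty $V$ one always has $\omega\in R^{-1}[V]$, one has $R^{-1}[V]=\dom{\frF}$ whenever $\omega\in V$ or $V$ is cofinal in $\omega$, and $R^{-1}[V]=\{\omega\}\cup\{a\mid a\leq\max V\}$ when $V$ is a nonempty finite subset of $\omega$. Given $\clV=\{V_0,V_1\}$ with $\omega\in V_1$ and $V_0\se\omega$, I would split on whether $V_0$ is infinite: if so, $V_0$ is cofinal, so both $R^{-1}[V_0]$ and $R^{-1}[V_1]$ equal $\dom{\frF}$, and $\clV$ itself is tuned by \eqref{eq:tuned-alternative}; if $V_0$ is finite with greatest element $n$, I would refine to $\{V_0,U_1,U_2\}$ with $U_1=\{x\mid n<x<\omega\}$ and $U_2$ the remaining points, and verify that $R^{-1}[V_0]=V_0\cup U_2$ while $R^{-1}[U_1]=R^{-1}[U_2]=\dom{\frF}$, so that each preimage is a union of blocks.

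The verifications of the tuning conditions are routine once the formula for $R^{-1}$ is in hand, so I do not expect them to be the real difficulty. The genuine obstacle is conceptual and lies in the choice of $\frF$: one needs a frame in which cutting off a finite initial segment leaves a cofinal tail, so that $R^{-1}$ of any infinite block is the entire domain and only a bounded number of blocks is ever needed, securing the uniform bound behind $1$-finiteness; while simultaneously some subframe has infinite height, defeating $2$-finiteness through Proposition \ref{prop:2finite-to-subs-m-n}. The dominating point $\omega$ is precisely what reconciles these requirements, since it forces $\frF$ into a single cluster of height $1$ yet is discarded upon restricting to $\omega$.
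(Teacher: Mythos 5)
Your proposal is correct and coincides with the paper's own argument: the same frame $(\omega+1,R)$ with $aRb$ iff $a\leq b$ or $a=\omega$, failure of $2$-finiteness via Proposition \ref{prop:2finite-to-subs-m-n} applied to the subframe $(\omega,\leq)$, and $1$-finiteness via Theorem \ref{thm:LFviaTuned}\eqref{thm:LFviaTuned-lf-k-fin} with the identical tuned refinement $\{V_0,U_1,U_2\}$ of size at most $3$. Your explicit computation of $R^{-1}[V]$ only fills in details the paper leaves as ``straightforward.''
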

}

In fact, there are 1-finite but not locally tabular logics above the logic $\LK{TB}$ of reflexive and symmetric relations.  Namely,  consider the logic $L$ of the following, reflexive and symmetric, structure  $F=(\mathbb{Z},R)$:
$$aRb \tiff |a-b|\neq 1.$$
Let $F_0$ be the restriction of $F$ on the set of natural numbers. Then the subalgebra of $\Alg(F_0)$ generated by the singleton $\{0\}$ contains all singletons $\{a\}$, $a\in \omega$ (simple induction on $a$). Hence, $\Alg(F_0)$ is not 1-finite. By Proposition \ref{prop:LF-for-subframess},
$L$ is not 2-finite.
On the other hand, we claim that $L$ is 1-finite. To show it, consider
a 2-element partition $\clV=\{V_0,V_1\}$ of $\mathbb{Z}$, and let $|V_0|\leq |V_1|$.
It is easy to verify that if $a,b\in V_0$ for some distinct $a,b\in \mathbb{Z}$ with $|a-b|\neq 2$, then $\clV$ is tuned in $F$.
In the remaining cases, $|V_0|\leq 2$, and we have $V_0=\{a\}$ or $V_0=\{a-1,a+1\}$ for some $a\in \mathbb{Z}$.
In both cases,
the three element partition $\clU$ that contains $\{a\}$ and  $\{a-1,a+1\}$ is a tuned refinement of $\clV$.
By Theorem
\ref{thm:LFviaTuned}\eqref{thm:LFviaTuned-lf-k-fin}, $L$ is 1-finite.
This proves
\begin{theorem}\label{thm:1-not-k-abobe-KTB}
There exists a unimodal 1-finite logic above $\LK{TB}$, which is not locally tabular.
\end{theorem}

Hence, in the non-transitive case, 1-finiteness does not imply local tabularity.
We do not know if $k$-finiteness of a modal logic implies its local tabularity for some fixed finite $k$.\footnote{In recent manuscripts \cite{Hk-1} and \cite{Hk-2}, it is claimed that no such $k$ exists for intermediate logics.}
\hide{
\begin{problem}
Does there exist a finite $k$ such that $k$-finiteness of a modal logic implies its local tabularity?
\end{problem}
The same question is open in the case of intermediate logics \cite[Problem 2.4]{GuramRevazProblem}.}

\subsection{Relativized height and relativized pretransitivity}
Let $M=(F,\theta)$ be a $k$-model, $V\subseteq \dom F$.
The {\em restriction $M\restr V$ of $M$ to $V$} is the $k$-model $(F\restr V,\theta\restr V)$, where
$\theta\restr V(p_i)=\theta(p_i)\cap V$ for all $i<k$.

The following relativization argument was proposed in  \cite{Spaan1993complexity}.\footnote{ See the proof of Theorem 2.2.1 in \cite{Spaan1993complexity}.}
Let $k\leq \omega$, $\xi$ a $k$-formula. By induction on the structure of a $k$-formula $\vf$,
we define $\trSub{\vf}{\xi}$:
$\trSub{\bot}{\xi}=\bot$, $\trSub{p}{\xi}=p$ for variables, $\trSub{\psi_1\imp \psi_2}{\xi}=\trSub{\psi_1}{\xi}\imp \trSub{\psi_2}{\xi}$,
and $\trSub{\Di \psi}{\xi}= \Di (\xi\con \trSub{\psi}{\xi})$ for $\Di\in\Al$.

\begin{proposition}\label{prop:relSpaan}
Let $M=(F,\theta)$ be a $k$-model, $\xi$ a $k$-formula, and $V=\vext(\xi)$. Then for every $a\in V$,
for every $k$-formula $\vf$,
$$\mM\restr V,a\mo \vf \text{ iff } \mM,a\mo \trSub{\vf}{\xi}$$
\end{proposition}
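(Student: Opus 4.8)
The plan is to argue by induction on the structure of the $k$-formula $\vf$, fixing the model $M=(F,\theta)$, the formula $\xi$, and the set $V=\vext(\xi)$ throughout, and establishing the stated equivalence simultaneously for all $a\in V$. Since the translation $\trSub{\cdot}{\xi}$ leaves the variables unchanged and commutes with $\imp$, altering only the modal clause, the induction is driven entirely by the modal case; the atomic and Boolean cases amount to bookkeeping.

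First I would dispose of the base cases. For $\vf=\bot$ the equivalence is trivial, as $\trSub{\bot}{\xi}=\bot$ and both sides are false. For a variable $\vf=p_i$ we have $\trSub{p_i}{\xi}=p_i$, and since $a\in V$, the membership $a\in\theta\restr V(p_i)=\theta(p_i)\cap V$ holds iff $a\in\theta(p_i)$; hence $M\restr V,a\mo p_i$ iff $M,a\mo p_i$. The implication case $\vf=\psi_1\imp\psi_2$ follows at once from the induction hypothesis applied to $\psi_1$ and $\psi_2$ at the same point $a\in V$, together with the fact that $\trSub{\cdot}{\xi}$ distributes over $\imp$.

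The crux is the modal case $\vf=\Di\psi$ with $\Di\in\Al$. Here I would unfold the truth condition in the restricted model: for $a\in V$, $M\restr V,a\mo\Di\psi$ holds iff there is a $b$ with $a\,(R_\Di\cap(V\times V))\,b$ and $M\restr V,b\mo\psi$; since $a\in V$ already, this is the same as the existence of some $b\in V$ with $aR_\Di b$ and $M\restr V,b\mo\psi$. The key point is that the condition $b\in V$ is exactly what licenses the induction hypothesis for $\psi$ at $b$, yielding $M\restr V,b\mo\psi$ iff $M,b\mo\trSub{\psi}{\xi}$, and that by definition of $V$ the condition $b\in V$ is equivalent to $M,b\mo\xi$. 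Combining these, the displayed requirement becomes: there is a $b$ with $aR_\Di b$ and $M,b\mo\xi\con\trSub{\psi}{\xi}$, that is, $M,a\mo\Di(\xi\con\trSub{\psi}{\xi})=\trSub{\Di\psi}{\xi}$, as needed.

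The only delicate point is to ensure that the induction hypothesis is invoked solely at points of $V$; this is guaranteed in the modal step because the restricted relation $R_\Di\cap(V\times V)$ forces every successor $b$ to lie in $V$, and it is precisely this constraint that the added conjunct $\xi$ reproduces on the right-hand side. No real obstacle arises, and the argument is a routine structural induction once the modal clause is arranged in this way.
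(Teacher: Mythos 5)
Your proof is correct and takes the same approach as the paper, which simply states ``By induction on the structure of $\vf$'' and leaves the details implicit; your write-up fills in exactly those details, with the modal case handled as intended.
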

\begin{proof}
By induction on the structure of $\vf$.
\end{proof}
\begin{proposition}\label{prop:relSpaanVal}
Let $\clF$ be a class of $\Al$-frames, $\vf$ a formula in the language of $\Al$,  and $q$ a variable not occurring in $\vf$. Then
$$
\clF\mo q\imp \trSub{\vf}{q} \text{ iff }\SubFrs{\clF}\mo\vf.
$$
\end{proposition}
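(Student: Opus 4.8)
The plan is to reduce both implications to the pointwise relativization lemma, Proposition \ref{prop:relSpaan}, taken with the relativizing formula $\xi := q$. The key remark is that for any valuation $\theta$ on a frame $F\in\clF$ we have $\vext(q)=\theta(q)$, so that Proposition \ref{prop:relSpaan} equates truth of $\vf$ in the submodel living on $\theta(q)$ with truth of $\trSub{\vf}{q}$ in the whole model $(F,\theta)$. Throughout I would work at the level of pointed models and only at the end quantify over points, valuations, and frames to pass to validity.

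For the direction from right to left, assume $\SubFrs{\clF}\mo\vf$ and take $F\in\clF$, a valuation $\theta$, and a point $a$. To check $(F,\theta),a\mo q\imp\trSub{\vf}{q}$ I may assume $(F,\theta),a\mo q$, i.e. $a\in V$ where $V:=\theta(q)=\vext(q)$. By Proposition \ref{prop:relSpaan}, $(F,\theta),a\mo\trSub{\vf}{q}$ is equivalent to $(F,\theta)\restr V,a\mo\vf$; but $(F,\theta)\restr V=(F\restr V,\theta\restr V)$ is a model on $F\restr V\in\SubFrs{\clF}$, so the latter holds by the assumption $\SubFrs{\clF}\mo\vf$. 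This yields $\clF\mo q\imp\trSub{\vf}{q}$.

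For the converse, assume $\clF\mo q\imp\trSub{\vf}{q}$ and take $G=F\restr Y\in\SubFrs{\clF}$ with $F\in\clF$, a valuation $\sigma$ on $G$, and a point $a\in Y$. The one real move is to lift $\sigma$ to a valuation $\theta$ on $F$: put $\theta(q):=Y$ and $\theta(p):=\sigma(p)$ for every variable $p$ occurring in $\vf$ (the values on the remaining variables are irrelevant). Since $\sigma(p)\subseteq Y$ we have $\theta(p)\cap Y=\sigma(p)$, so $(F,\theta)\restr Y$ and $(G,\sigma)$ agree on all variables of $\vf$, hence on $\vf$ itself. Now $a\in Y=\theta(q)$ gives $(F,\theta),a\mo q$, so the hypothesis forces $(F,\theta),a\mo\trSub{\vf}{q}$; Proposition \ref{prop:relSpaan} (with $V=\vext(q)=Y$) then gives $(F,\theta)\restr Y,a\mo\vf$, that is $(G,\sigma),a\mo\vf$. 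As $a,\sigma,G$ were arbitrary, $\SubFrs{\clF}\mo\vf$.

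The arguments are symmetric and short once Proposition \ref{prop:relSpaan} is in hand; the only point demanding care---and the place where the hypothesis that $q$ does not occur in $\vf$ is used---is the valuation bookkeeping in the converse direction, where I set $\theta(q):=Y$ freely while keeping $\theta$ faithful to $\sigma$ on the variables of $\vf$. I expect this to be the sole genuine obstacle: one must verify that restricting the lifted model back to $Y$ returns exactly $(G,\sigma)$ on the relevant variables, so that validity of $\vf$ transfers cleanly.
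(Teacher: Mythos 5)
Your proof is correct and follows exactly the route the paper intends: the paper's own proof is simply ``Follows from Proposition \ref{prop:relSpaan}'', and your argument supplies the standard details (taking $\xi:=q$ and, for the converse, lifting the valuation by setting $\theta(q):=Y$, which is where the hypothesis that $q$ does not occur in $\vf$ is used).
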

\begin{proof}
Follows from Proposition \ref{prop:relSpaan}.
\end{proof}

For a binary relation $R$ on a set $X$ and $V\subseteq X$, let $R_V$ denote the restriction of $R$ to $V$,
and let $R^*_V$  be
the transitive reflexive closure of $R_V$, that is:
$$
 R^*_V=\{(a,a)\mid a\in V\}\cup (R_V) \cup (R_V)^2  \cup \ldots
$$
A formula is said to be {\em Boolean}, if it is build using only Boolean connectives.
\begin{proposition}\label{prop:relativised-m-trans}
Assume that the logic of an $(l+1)$-frame $F=(X,R,R_1,\ldots,R_l)$ is $2$-finite.
Then there is $m<\omega$ such that for every
$k$-model $M=(F,\v)$, for every $k$-formula $\xi$, for every Boolean $k$-formula $\vf$,
for all $a\in V=\bar{\theta}(\xi)$, we have:
\begin{eqnarray}
  &&M,a\mo \trSub{\Box_0^{\leq m} \vf}{\xi} \text{ iff } R_{V}^*(a)\subseteq \vext(\vf). \label{eq:pretrInSub-Box}
\end{eqnarray}
\end{proposition}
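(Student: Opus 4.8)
The plan is to strip the relativization down to a statement about the single relation $R$, and then to supply a uniform bound $m$ making $R$ pretransitive on every subset. First I would apply Proposition \ref{prop:relSpaan}: for $a\in V=\vext(\xi)$ we have $M,a\mo \trSub{\Box_0^{\leq m}\vf}{\xi}$ iff $M\restr V,a\mo \Box_0^{\leq m}\vf$, where in $M\restr V=(F\restr V,\theta\restr V)$ the modality $\Box_0$ is read off the restricted relation $R_V$. Since $\Box_0^{\leq m}\vf=\bigwedge_{i\leq m}\Box_0^i\vf$, and since $\vf$ is modal-free its truth at any $b\in V$ is the same in $M\restr V$ and in $M$ (the same variables hold at $b$ in either model), this yields $M,a\mo \trSub{\Box_0^{\leq m}\vf}{\xi}$ iff $(R_V)^{\leq m}(a)\subseteq\vext(\vf)$, using that $(R_V)^{\leq m}(a)\subseteq V$. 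So it remains only to choose $m$ so that $(R_V)^{\leq m}=R_V^*$ for every $V\subseteq X$, i.e.\ so that $R_V$ is $m$-transitive uniformly in $V$.

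The heart of the argument is producing such an $m$, and here the point is to pass to the unimodal reduct $F_0=(X,R)$. I would first note that $2$-finiteness of $\Log{F}$ descends to $F_0$: for formulas $\vf,\psi$ built from $\Di_0$ alone, $F\mo\vf\lra\psi$ iff $F_0\mo\vf\lra\psi$, since the truth of such formulas depends only on $R$ and the valuation; hence the finitely many equivalence classes of $2$-formulas modulo $\Log{F}$ include only finitely many classes of formulas using $\Di_0$ alone, so $\Log{F_0}$ is $2$-finite. Applying Proposition \ref{prop:2finite-to-subs-m-n} to the class $\{F_0\}$ now yields a single $m<\omega$ such that every frame in $\SubFrs\{F_0\}=\{(V,R_V)\mid V\subseteq X\}$ is $m$-transitive; for a unimodal frame this means exactly $R_V^*=(R_V)^{\leq m}$.

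Combining the two steps with this $m$ gives $(R_V)^{\leq m}(a)=R_V^*(a)$ for $a\in V$, and hence $M,a\mo\trSub{\Box_0^{\leq m}\vf}{\xi}$ iff $R_V^*(a)\subseteq\vext(\vf)$, as required. I expect the only genuine obstacle to be the descent of $2$-finiteness to the reduct $F_0$: one must apply the subframe machinery to $R$ \emph{alone} rather than to $F$, because $2$-finiteness of $\Log{F}$ directly yields only pretransitivity of the union $R_{F\restr V}=\bigcup_i (R_i)_V$, whereas the statement needs pretransitivity of the single summand $R_V$. Everything else is routine bookkeeping about how relativization and modal-free evaluation interact with the restricted relation.
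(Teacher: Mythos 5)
Your proof is correct and follows essentially the same route as the paper's: both reduce to the unimodal reduct $(X,R)$, observe that $2$-finiteness descends to it, and invoke Proposition \ref{prop:2finite-to-subs-m-n} to obtain a uniform $m$ with $(R_V)^{\leq m}=R^*_V$ for all $V\subseteq X$. The only cosmetic difference is that you route the relativization through Proposition \ref{prop:relSpaan} while the paper does a direct induction on the modal depth $i$; these are interchangeable, and your explicit justification of the descent of $2$-finiteness (which the paper merely asserts) is a welcome addition.
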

\begin{proof}
Observe that $\trSub{\vf}{\xi}$ is $\vf$. By a straightforward induction on $i$ we obtain:
$$M,a\mo \trSub{\Di_0^{i}\vf}{\xi}\text{ iff } \EE b\;( (a,b)\in R_{V}^i\,\&\,M,b\mo \vf ).$$
It follows that
$$M,a\mo \trSub{\Di_0^{\leq i}\vf}{\xi}\text{ iff } \EE b\;( (a,b)\in R_{V}^{\leq i}\,\&\,M,b\mo \vf ).$$
Since the logic of $F$ is 2-finite, the logic of the frame $(X,R)$ is 2-finite as well.  By Proposition \ref{prop:2finite-to-subs-m-n}, there is $m <\omega$ such that for every $Y\subseteq X$, the frame $(X,R)\restr Y$ is $m$-transitive, and so $(R\restr V)^{\leq m}=R^*_V$. Hence we have:
$$M,a\mo \trSub{\Di_0^{\leq m} \vf}{\xi}\text{ iff } \EE b\;( (a,b)\in R_{V}^*\,\&\, M,b\mo \vf ),$$
and so \eqref{eq:pretrInSub-Box} follows.
\end{proof}

We conclude this section with a two-variable analog of Theorem \ref{thm:1-finite-to-m-h}.

\begin{theorem}
Assume that a logic $L$ is $2$-finite. Then there exists $m<\omega$ such that:
\begin{enumerate}[\normalfont (1)]
\item
$L\vd q\imp \trSub{\Di_\AlA^{m+1}p\imp \Di_\AlA^{\leq m} p}{q}$, and
\item if also $L$ is Kripke complete, then there exists $h$ such that
$L\vd q\imp \trSub{B^{[m]}_{h}}{q}$.
\end{enumerate}
\end{theorem}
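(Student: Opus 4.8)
The plan is to split on whether $L$ is Kripke complete and to isolate the role of completeness: it will be needed only for part~(2). The reason is a counting of variables. The pretransitivity formula in part~(1) involves only the two variables $p,q$, so it is a $2$-formula and the Canonical Model Theorem for the \emph{finite} $2$-canonical model can reach it; by contrast $B^{[m]}_h$ contains the variables $p_1,\dots,p_h$ together with $q$, hence is an $(h{+}1)$-formula, and the $2$-canonical model says nothing about it. This is exactly why part~(2) must instead go through validity over frames, which is available only under completeness.

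When $L$ is Kripke complete, I would write $L=\Log{\clF}$ where $\clF$ is the class of all $L$-frames (so $L=\Log{\clF}$ indeed holds for complete $L$). Since $L$ is $2$-finite, Proposition~\ref{prop:2finite-to-subs-m-n} supplies $m,h<\omega$ such that every frame in $\SubFrs{\clF}$ is $m$-transitive and has height at most $h$. By \eqref{pretr:sem} each such subframe validates $\Di_\AlA^{m+1}p\imp \Di_\AlA^{\leq m} p$, and by \eqref{eq:heigth-m} (applicable precisely because these subframes are $m$-transitive) it also validates $B^{[m]}_h$; thus $\SubFrs{\clF}\mo \Di_\AlA^{m+1}p\imp \Di_\AlA^{\leq m} p$ and $\SubFrs{\clF}\mo B^{[m]}_h$. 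Proposition~\ref{prop:relSpaanVal} then converts both validities into theorems of $\Log{\clF}=L$, yielding (1) and (2) with a single $m$ (and the corresponding $h$).

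When $L$ is not Kripke complete, only (1) is required, and (2) holds vacuously. Here I would argue inside the finite $2$-canonical model $M=(F_c,\theta_c)$, whose finiteness is nothing but $2$-finiteness. As $F_c$ is finite, it has only finitely many subframes, each pretransitive, so some $m<\omega$ makes every $F_c\restr Y$ $m$-transitive. Put $\psi=\Di_\AlA^{m+1}p\imp \Di_\AlA^{\leq m} p$ and let $V$ be the value of $q$ in $M$. For $a\in V$, Proposition~\ref{prop:relSpaan} gives $M,a\mo \trSub{\psi}{q}$ iff $M\restr V,a\mo\psi$; but $F_c\restr V$ is $m$-transitive, so $F_c\restr V\mo\psi$ by \eqref{pretr:sem}, whence $\psi$ is true at every point of the model $M\restr V=(F_c\restr V,\theta_c\restr V)$. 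For $a\notin V$ the implication $q\imp\trSub{\psi}{q}$ holds vacuously. Hence $q\imp\trSub{\psi}{q}$ is true throughout $M$, and being a $2$-formula it is a theorem of $L$ by the Canonical Model Theorem (Proposition~\ref{prop:k-canonical-model}), giving (1).

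The delicate point is the incomplete case of~(1): there $L$ is the logic of no frame class, so Proposition~\ref{prop:relSpaanVal} is unavailable and one cannot simply reduce to validity over subframes. The crux is that \emph{frame}-validity of the pretransitivity axiom on the finite subframe $F_c\restr V$—which is free from the finiteness of $F_c$—lets the relativization lemma transport truth, for the single canonical valuation that matters, back to $M$, after which the two-variable bound on $\psi$ makes the Canonical Model Theorem applicable. This is exactly the step that breaks down for $B^{[m]}_h$ because of its extra variables, which is why part~(2) is stated only under Kripke completeness.
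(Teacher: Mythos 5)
Your proposal is correct and follows essentially the same route as the paper: part (1) via the finiteness of the $2$-canonical frame, $m$-transitivity of all its subframes, the relativization lemma, and the Canonical Model Theorem; part (2) via Proposition \ref{prop:2finite-to-subs-m-n} and Proposition \ref{prop:relSpaanVal} applied to the class of $L$-frames. The only (harmless) differences are your explicit case split on completeness and your use of the model-level Proposition \ref{prop:relSpaan} in place of the frame-level Proposition \ref{prop:relSpaanVal} for part (1); your remark about securing a single $m$ for both parts is a point the paper glosses over, resolvable since $m$-transitivity is monotone in $m$.
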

\begin{proof}
The first statement follows from the same argument as in Theorem \ref{thm:1-finite-to-m-h}.
Namely, the 2-canonical frame $\frF$ of $L$ is finite.
The case when $L$ is inconsistent is trivial. Otherwise, $\frF$ is non-empty.  Put $m=|\dom{F}|-1$. Then every
subframe of $\frF$ is $m$-transitive, and so $\Di_\AlA^{m+1}p\imp \Di_\AlA^{\leq m}$ is valid there
by \eqref{pretr:sem}.
By Proposition \ref{prop:relSpaanVal},
$\vf=q\imp\trSub{\Di_\AlA^{m+1}p\imp \Di_\AlA^{\leq m} p}{q}$ is valid in $\frF$.
Since $\vf$ is a formula in two variables,  $L\vd \vf$ by the Canonical Model Theorem (Proposition \ref{prop:k-canonical-model}).

Now assume that $L$ is Kripke complete and that $\clF$ is the class of $L$-frames.
By Proposition \ref{prop:2finite-to-subs-m-n},
there exist $h,m<\omega$ such that
the formulas  $\Di_\AlA^{m+1} p \imp \Di_\AlA^{\leq m} p$ and $B_h^{[m]}$
are valid in $\SubFrs{\clF}$, and so the corresponding
relativizations are valid in $\clF$ by Proposition \ref{prop:relSpaanVal}.
\end{proof}

We conjecture that the second statement remains true for Kripke incomplete logics.

\section{Local tabularity and reflexivity}\label{sec:refl}

For a frame $F=(X,(R_\Di)_{\Di\in \Al})$, let $\refl{F}$ be the
frame $(X,(\refl{R_\Di})_{\Di\in \Al})$, where $\refl{R_\Di}$ is the reflexive closure
$R_\Di\cup\{(a,a)\mid a\in X\}$ of $R_\Di$.
For a class $\clF$ of frames, $\refl{\clF}=\{\refl{F}\mid F\in\clF\}$.

We will show that $\Log{\clF}$ is locally tabular iff $\Log{\refl{\clF}}$ is.
Once the `only if' direction is straightforward, the `if' direction is non-trivial and is based on the following crucial lemma.

\begin{lemma}\label{lem:refl-reduction-key-uni}
Let $F=(X,R,R_1,\ldots, R_{l})$ be an $(l{+}1)$-frame,
$G=(X,\refl{R},R_1,\ldots R_{l})$, and $\Log{G}$ locally tabular.
Then for every $k<\omega$, every $k$-generated subalgebra of $\Alg{F}$
is contained in a $(k+3)$-generated subalgebra of $\Alg{G}$.
\end{lemma}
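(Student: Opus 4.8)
\emph{Setup and reduction.} The algebras $\Alg{F}$ and $\Alg{G}$ share the same Boolean reduct $\clP(X)$ and the same operations $f_{R_i}(Y)=R_i^{-1}[Y]$ for $i=1,\dots,l$; they differ only in the operation attached to the first relation, namely $f_R(Y)=R^{-1}[Y]$ in $\Alg{F}$ versus $g(Y):=f_{\refl R}(Y)=R^{-1}[Y]\cup Y$ in $\Alg{G}$, so that $g(Y)=f_R(Y)\cup Y$. Fix a $k$-generated subalgebra $B=\langle P_0,\dots,P_{k-1}\rangle$ of $\Alg{F}$. The plan is to produce three sets $Q_1,Q_2,Q_3\subseteq X$ (allowed to depend on the $P_i$ and on $F$) such that the subalgebra $C=\langle P_0,\dots,P_{k-1},Q_1,Q_2,Q_3\rangle$ of $\Alg{G}$ is closed under the operation $f_R$. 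Once this is done the lemma follows: $C$ contains every $P_i$ and is closed under all operations of $\Alg{F}$ (the Boolean ones, the shared $f_{R_i}$, and $f_R$), so $B\subseteq C$, and $C$ is $(k+3)$-generated.

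\emph{Locating the discrepancy.} For $Y\subseteq X$ put $E(Y)=\{a\in Y\mid R(a)\cap Y=\emptyset\}$. A direct check gives $f_R(Y)=g(Y)\setminus E(Y)$ with $E(Y)\subseteq Y$, so closure of $C$ under $f_R$ reduces to showing $E(Y)\in C$ for every $Y\in C$. Writing $d=\{a\in X\mid aRa\}$, one sees $E(Y)\subseteq Y\setminus d$, because a reflexive point of $Y$ always has itself as an $R$-successor in $Y$; in fact $f_R$ and $g$ agree on $d$. I would therefore take $Q_1=d$, which isolates the genuine task: among the $R$-irreflexive points, detect those all of whose one-step $R$-successors leave $Y$.

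\emph{Using local tabularity of $\Log{G}$.} Since $\Log{G}$ is locally tabular it is $1$-finite, so by Theorem \ref{thm:1-finite-to-m-h} (with \eqref{pretr:sem} and \eqref{eq:heigth-m}) $G$ has finite height $h$ and $R_G$ is $m$-transitive for some $h,m$; as $R_G^{*}=R_F^{*}$, the frame $F$ has the same finite height. Moreover $\Alg{G}$ is locally finite (Theorem \ref{thm:Maltsev}\eqref{thm:Maltsev-kLF}, or \ref{thm:LFviaTuned}\eqref{thm:LFviaTuned-lfA}), so the candidate $C$ is finite; being finite and closed under the increasing operation $g$, it contains the $g$-fixpoint $(R^{*})^{-1}[Y]$ of every $Y\in C$. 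For the remaining two generators I would use a ``separating'' set $S$ together with a refinement $S'$ (so $Q_2=S$, $Q_3=S'$): applying $g$ to a monochromatic slice of $Y$ detects \emph{cross}-colour successors without self-reference, since for $a\in S$ one has $a\in R^{-1}[Y\cap\overline{S}]$ iff $a\in g(Y\cap\overline{S})$, and symmetrically. Combined with the height stratification, this should reduce the detection of an $R$-successor of $a$ inside $Y$ to strictly lower clusters, where the transitive-closure values already present in $C$ can be exploited.

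\emph{Main obstacle.} The hard point is precisely the detection of \emph{same-cluster} (equivalently, same-colour) one-step $R$-successors inside $Y$, uniformly over all $Y\in C$, with only a \emph{constant} number of auxiliary sets. A naive proper colouring of the irreflexive part of $R$ is of no use, because an $R$-cluster may contain arbitrarily large cliques; the way out is that a point contributes to $E(Y)$ only when it is $R$-maximal in $Y$, so the finite-height decomposition into clusters, together with the fixpoint data $(R^{*})^{-1}[\cdot]$ that $C$ already carries, must be leveraged to peel the frame level by level. Establishing that three marker sets suffice for this peeling, independently of $k$, is the crux of the argument; the surrounding steps are routine given the tuned-partition and Maltsev machinery of Section \ref{sec:prel}.
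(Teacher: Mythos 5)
Your reduction is sound as far as it goes: writing $g(Y)=R^{-1}[Y]\cup Y$ for the operation of $\Alg{G}$ and $E(Y)=Y\setminus R^{-1}[Y]$, you correctly observe that $f_R(Y)=g(Y)\setminus E(Y)$, so it would suffice to exhibit three auxiliary sets whose addition makes the generated subalgebra $C$ of $\Alg{G}$ closed under $Y\mapsto E(Y)$. But the argument stops exactly where the lemma begins: you neither construct the three sets nor prove the closure property, and you say yourself that establishing that three marker sets suffice, independently of $k$, ``is the crux of the argument.'' That crux is the entire content of the lemma; the surrounding observations (finite height of $G$, local finiteness of $\Alg{G}$, the fixpoint $(R^{*})^{-1}[Y]\in C$, and the fact that a separating set detects cross-colour one-step successors) are indeed routine and do not by themselves yield the result. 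There is also a structural circularity you never address: your $Q_2,Q_3$ must separate successors for \emph{every} $Y\in C$, but $C$ is defined from $Q_2,Q_3$, so the quantifier ``for all $Y\in C$'' cannot be discharged by a single a posteriori choice without some stratified construction that breaks the dependence.

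The paper resolves precisely this difficulty by an iterative scheme absent from your proposal. It builds a sequence of successively finer finite partitions $X/{\sim_n}$, where $\sim_0$ is induced by the $k$-formulas in $(G,\theta)$ and $\sim_{n+1}$ additionally uses the set $Q_n$ of \emph{defects} of the $\sim_n$-classes (your $E(U)$ for those classes $U$ with $R\restr U\neq\emp$); it sets $Q=\bigcup_n Q_n$, introduces a rank function on defective classes whose parity yields the generator $E$, and uses maximal clusters of relativized subframes to build the generator $S$. The decisive step (Claim \ref{claim:main}) characterizes $a\in Q_n$ not by one-step successors but by monochromaticity of the relativized closure $R_V^{*}(a)$ with respect to $Q$, $E$ and $S$, which is expressible by relativized $\Box^{\leq m}$-formulas via $2$-finiteness (Proposition \ref{prop:relativised-m-trans}). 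Finiteness of the $(k+3)$-generated subalgebra then forces $Q_N=\emp$ for some $N$, and a tuned-in-$G$ partition without defective classes is tuned in $F$ (Claim \ref{claim:noDef-tuned}), which gives the containment $\dom{A}\subseteq\dom{B}$. Note finally that the paper proves only this containment, not that the $(k+3)$-generated subalgebra is itself closed under $f_R$; your target is strictly stronger, and nothing in your sketch (or in the paper) shows that three generators achieve it. In short, you have identified the right bottleneck but left it unsolved, so the proposal is not a proof.
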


\begin{proof}

Assume that a subalgebra $A$ of $\Alg{F}$ is generated by $P_0,\ldots P_{k-1}\subseteq X$.
We will construct three sets $$Q,E,S\subseteq X$$
such that $A$ is contained
in the subalgebra of $\Alg{G}$ generated by $P_0,\ldots P_{k-1}, Q,E,S$.
This construction requires a number of technical steps, and we will split it into the following parts:
definition of $Q$;  auxiliary constructions; definitions of $E$ and $S$; embedding the algebras.

\bigskip

\subparagraph{{\bf Defining $Q$.}}

Let $U\subseteq X$, $c\in U$.  We say that $c$ is a {\em defect of} $U$, if
$aRb$ for some  $a,b\in U$, and
$cRd$ for no $d$ in $U$.  The set of all defects of $U$ is denoted
by $\Def{U}$. A set is {\em defective}, if it contains a defect.


\begin{claim}\label{claim:noDef-tuned}
 Let ${\sim}$ be an equivalence on $X$ such that
$X{/}{\sim}$ is tuned in $G$. If there are no defective $\sim$-classes,
then $X{/}{\sim}$ is also tuned in $F$.
\end{claim}
\begin{proof}
We only need to check that $X{/}{\sim}$ is $R$-tuned.
Assume that $U,V\in X{/}{\sim}$, $a R b$ for some $a\in U$ and $b\in V$. Let $a'\in U$.
We need to show that $a'R b'$ for some $b'\in V$.

Assume that $U\neq V$.
Since $X{/}{\sim}$ is tuned in $G$, there is $b'\in V$ such that $a'\refl{R}b'$.
Since $a'\neq b'$, we get $a'Rb'$.

If $U=V$, then, since $aRb$, $a,b\in U$, and $U$ is not defective, we have $a'Rb'$ for some $b'\in U$.
\end{proof}

Consider the $k$-valuation $\theta:\{p_0,\ldots,p_{k-1}\}\to \clP(X)$, $\theta(p_i)=P_i$ for $i<k$.

By recursion,
we define a sequence of equivalences on $X$
\begin{equation}\label{eq:simn-refine}
\sim_0\;\supseteq\;\sim_1\;\supseteq\;\sim_2\;\supseteq\;\ldots,
\end{equation}
and subsets $Q_0,Q_1,Q_2\ldots$  of $X$.

Let $\sim_0$ be the equivalence induced by all formulas
in variables $p_0,\ldots,p_{k-1}$ in the model $(G,\theta)$. Let $Q_0$ be the set $\bigcup\{\Def{U}\mid U\in X{/}{\sim_0}\}$ of defects occurring in $\sim_0$-classes.

To define $\sim_{n+1}$ and $Q_{n+1}$,  we consider
the $(k{+}n{+}1)$-model $(G,\theta_n)$ such that {$\theta_n: \{p_0,\ldots,p_{k-1},q_0,\ldots,q_{n}\}\to \clP(X)$} extends
$\theta$ and $\theta_n(q_0)=Q_0$, \ldots, $\theta_n(q_n)=Q_n$. Let $\sim_{n+1}$ be
the
equivalence   induced
by all formulas in variables $p_0,\ldots,p_{k-1},q_0,\ldots,q_{n}$ in the model $(G,\theta_n)$.
Let $Q_{n+1}=\bigcup\{\Def{U}\mid U\in X{/}{\sim_{n+1}}\}$.

We put $$Q=\bigcup_{n<\omega} Q_n.$$

\bigskip

\subparagraph{{\bf Preliminaries for defining $E$ and $S$.}}
The role of $E$ and $S$ is to ``distinguish'' subsets $Q_n$ of $Q$.
To define $E$ and $S$, we need some preliminary observations about the constructed partitions.

\begin{claim}\label{claim:simi-are-finandtuned}
All quotient sets $X{/}{\sim_n}$ are finite and tuned in $G$.
\end{claim}
\begin{proof}
Since the logic of the frame $G$ is locally tabular, its algebra is locally finite.
Now the claim follows
from Proposition \ref{prop:partIn-k-model}.
\end{proof}

For $a\in X$, let $[a]_n$ denote the $\sim_n$-class of $a$.
\begin{claim}\label{claim:detachDefects}
If $U\in X{/}{\sim_{n}}$, $a\in \Def{U}$ and $m>n$, then  $[a]_{m}\subseteq \Def U$ and
$[a]_{m}$ is not defective.
\end{claim}
\begin{proof}
We have $a\in Q_n\cap U$. By the definition of $\sim_{n+1}$, $[a]_{n+1}\subseteq Q_{n}\cap U= \Def U$.
Since $m\geq n+1$, $[a]_m\subseteq [a]_{n+1}$ by \eqref{eq:simn-refine}.
So $[a]_m$ is contained in $\Def{U}$.
It follows that
$bRc$  for no $b,c\in [a]_m$, and so $[a]_m$ is not defective.
\end{proof}

\begin{claim}\label{claim:Qndisjoint}
All sets $Q_n$ are disjoint.
\end{claim}
\begin{proof}
Assume $a\in Q_n$ for some $n$. Then $a\in \Def{U}$, where $U=[a]_n$.
For every $m>n$, the $\sim_{m}$-class of $a$ is not defective by Claim \ref{claim:detachDefects}. Hence, $a\notin Q_m$.
\end{proof}

By Claim \ref{claim:Qndisjoint},
for any $a\in Q$ there exists a unique $n$,
the {\em index of  $a$},
 such that $a\in Q_n$; we denote such an $n$ by $\ind{a}$.

\hide{
\begin{claim}\label{claim:index}
If $a\in Q$, then $[a]_n$ is not defective for every $n>\ind{a}$.
\end{claim}
\begin{proof}
By the definition, $a$ is a defect of the set $[a]_{\ind{a}}$. By Claim \ref{claim:detachDefects},
$[a]_n \subseteq \Def{[a]_{\ind{a}}}$.
By the definition of a defect, $R\restr \Def{[a]_{\ind{a}}} =\emp$, so
$R\restr [a]_n =\emp$. Hence,  $[a]_n$ is not defective.
\end{proof}
}

\begin{claim}\label{claim:ind}
Let $U\in X{/}{\sim_{n}}$, $a\in U\cap Q$, and $\ind{a}> n$. Then $aRc$ for some $c\in U$.
\end{claim}
\begin{proof}
$a$ is a defect of some $V\in X{/}{\sim_m}$ with $m> n$. Then $V\subseteq U$; by the definition of a defective set, $b R c$ for some $b,c\in V$.
So $b,c \in U$, and since $a$ is not a defect of $U$,
$aRd$ for some $d$ in $U$.
\end{proof}

Let $\AllPart$ be the union of all quotients $X{/}{\sim_n}$:
$$\AllPart=\bigcup\{X{/}{\sim_n}\mid n<\omega\}.$$
Let $\AllDef$ denote the set of all defective sets in $\AllPart$:
$$
\AllDef=\{V\in \AllPart\mid \Def{V}\neq \emp\}.
$$
Let $V\in \AllDef$.  It is immediate that
there exists a unique $n$ such that $V\in X{/}{\sim_n}$;
this $n$ is called the {\em index of $V$}, in symbols $\ind{V}$.
It is also immediate that if $a\in \Def{V}$, then $\ind{a}=\ind{V}$.

\begin{claim}\label{claim:ind-a-greater} If $a\in V\cap Q$ for some $V\in \AllDef$, then
$\ind{a}\geq \ind{V}$.
\end{claim}
\begin{proof}
If $m>\ind{a}$, then $[a]_m$ is not defective by Claim \ref{claim:detachDefects}.
Since $V$ is defective, $V=[a]_m$ for some $m\leq \ind{a}$.
\end{proof}

\bigskip

\subparagraph{{\bf Defining $E$.}}

For $U\in \AllPart$, put
 $$\Sigma(U)=\{V\in\AllPart \mid U\subset V\}.$$
Clearly, for all $U,U'$ in $\AllPart$, we have
\begin{equation}\label{eq:AllPart}
U\subseteq U'\text{ or } U'\subseteq U \text{ or } U'\cap U=\emp.
\end{equation}
Hence, $\Sigma(U)$ is a $\subseteq$-chain.
We put $$
{\Sigma}^\up(U)=\{V\in \Sigma(U)\mid \EE a \in U \, \EE b\in \Def{V} \;aRb\}.
$$
\hide{
$\EE a \in U \, \EE b\in \Def{V}$ does not let us prove Same Index claim; update: we do not need it anymore
}

\begin{claim}\label{claim:sigma-forall}
For $U\in \AllPart$, ${\Sigma}^\up(U)=\{V\in \Sigma(U)\mid \AA a \in U \, \EE b\in \Def{V} \;aRb\}.$
\end{claim}
\begin{proof}
Let us check that ${\Sigma}^\up(U)\subseteq \{V\in \Sigma(U)\mid \AA a \in U \, \EE b\in \Def{V} \;aRb\}.$
Assume that  $V\in {\Sigma}^\up(U)$. We have $U\in X{/}{\sim_n}$ for some $n$. Then
$U\subset V\in X{/}{\sim_i}$ for some $i<n$, and we have $aRb$  for some
$a \in U$, $b\in \Def{V}$. We have $U=[a]_n\subseteq [a]_{i+1}$.
Let $a'\in U$.
Since $X{/}{\sim_{i+1}}$ is tuned in $G$, there is $b'\in [b]_{i+1}$ with $a'\refl{R}b'$.
By Claim \ref{claim:detachDefects},  $b'\in \Def{V}$. It remains to observe that
$a'\neq b'$: indeed, since $a'\sim_{i+1} a$ and $a$ is not a defect of $V$,
by Claim \ref{claim:detachDefects} we conclude
that $a'$ is not a defect of $V$ as well. Hence, $a'Rb'$.

The other inclusion is trivial, since $U$ is non-empty.
\end{proof}

\begin{claim}\label{claim:rks-monot}
If $U, U'\in \AllPart$, $U'\sub U$, then ${\Sigma}^\up(U')\supseteq {\Sigma}^\up(U)$.
\end{claim}
\begin{proof}
Immediate from Claim \ref{claim:sigma-forall}.
\end{proof}

The cardinality of ${\Sigma}^\up(U)$ is called the {\em rank of} $U$ and is denoted by $\rk{U}$.
\begin{claim}\label{claim:rks-all-above}
Assume that for $U\in\AllPart$, ${\Sigma}^\up(U)=\{V_{\rk{U}-1},\ldots, V_0\}$ with $V_{\rk{U}-1}\subset \ldots \subset V_0$. Then for each $j<\rk{U}$ we have
${\Sigma}^\up(V_j)=\{V_{j-1},\ldots, V_0\}$ and so
$\rk{V_j}=j$.
\end{claim}
\begin{proof} Take $j<\rk{U}$.
By Claim \ref{claim:rks-monot}, ${\Sigma}^\up(V_j) \subseteq \{V_{\rk{U}-1},\ldots, V_0\}$. Since ${\Sigma}^\up(V_j)\subseteq \Sigma(V_j)$, $V_i \in {\Sigma}^\up(V_j)$ only if $i<j$.
So ${\Sigma}^\up(V_j) \subseteq \{V_{j-1},\ldots, V_0\}$.
If $i<j$, then $V_i\in {\Sigma}^\up(V_j)$ by
the definition of ${\Sigma}^\up$, since $U\subset V_j\subset V_i$ and $V_i\in{\Sigma}^\up(U)$.
\end{proof}

For $a\in Q$, the {\em rank of $a$}, $\rk{a}$, is defined as the rank of
the $\sim_{\ind{a}}$-class of $a$:
$$
\rk{a}=\rk{([a]_{\ind{a}})}.
$$

\begin{claim}\label{claim:ranks}
Let $V\in \AllDef$, $a \in Q\cap V$.
Then:
\begin{enumerate}[\normalfont(i)]
  \item $\rk{a}\geq \rk{V}$; \label{claim:ranks-1}
  \item If $\rk{a}> \rk{V}$, then there exists $b\in V\cap Q$
 such that $aRb$ and $\rk{b}=\rk{a}-1$. \label{claim:ranks-2}
\end{enumerate}
\end{claim}
\begin{proof}
Let $n=\ind{V}$, that is $V\in X{/}{\sim_n}$.
Since $V=[a]_n$ is defective, $n\leq\ind{a}$
by Claim \ref{claim:ind-a-greater}.
Let $V'=[a]_{\ind{a}}$. Is follows that $V'\subseteq V$.
By Claim \ref{claim:rks-monot}, we have  ${\Sigma}^\up(V')\supseteq {\Sigma}^\up(V)$.
Now the first statement follows.

Let $\rk{a}> \rk{V}$. In this case ${\Sigma}^\up(V')\supset {\Sigma}^\up(V)$. So we have for some
$V_{\rk{a}-1}\subset \ldots \subset V_0$:
\begin{equation}\label{eq:sigmupVVprime}
  {\Sigma}^\up(V')=\{V_{\rk{a}-1},\ldots, V_{\rk{V}-1},\ldots, V_0\},\quad
  {\Sigma}^\up(V)=\{V_{\rk{V}-1},\ldots, V_0\}.
\end{equation}
By Claim \ref{claim:sigma-forall},
there exists $b\in \Def{V_{\rk{a}-1}}$ with $aRb$.
We have $V_{\rk{a}-1}=[b]_{\ind{b}}$, and so $\rk{b}=\rk{V_{\rk{a}-1}}$, and hence $\rk{b}=\rk{a}-1$ by Claim \ref{claim:rks-all-above}.

It remains to show that $b\in V$.
By \eqref{eq:sigmupVVprime},  $V_{\rk{a}-1}\notin {\Sigma}^\up(V)$.
It follows that $V_{\rk{a}-1}\notin \Sigma(V)$, since we have $a \in V$, $aRb$, $b\in \Def{V_{\rk{a}-1}}$.
Hence,  $V$ is not a proper subset of $V_{\rk{a}-1}$. Since $a\in V\cap V_{\rk{a}-1}$,
by \eqref{eq:AllPart}
we have $V_{\rk{a}-1}\subseteq V$. Hence $b\in V$.
\end{proof}

Let $E$ be the set of $a\in Q$ of even rank:
$$
E=\{a\in Q\mid \rk{a} \text{ is even}\}.
$$

\bigskip

\subparagraph{{\bf Defining $S$.}}

For $V\in\AllDef$, put
\begin{eqnarray}
  \overline{V}&=&(V\setminus Q_{\ind{V}})\cap Q,\\
G_V&=&(\overline{V},(R\restr \overline{V})^*).
\end{eqnarray}
Let $\Cl{V}$ be the set of all maximal clusters in the preordered set $G_V$,
\begin{equation*}
\AllCl=\{C\in \bigcup\{\Cl{V}\mid V\in \AllDef\}\mid |C|>1\}.
\end{equation*}

\begin{claim}\label{claim:DCdisjoint}~
\begin{enumerate}[\normalfont(i)]
\item If $C\in\Cl{V}$, $C'\in\Cl{V'}$ for some $V,V'\in \AllDef$,  $C\cap C'\neq \emp$, and
 $\ind{V'}\leq \ind{V}$, then $C\subseteq C'$.\label{AllC-ind-monot}
\item For all $C,C'\in \AllCl$, we have $C\subseteq C'\text{ or } C'\subseteq C \text{ or } C\cap C'=\emp$; \label{DCdisjoint-1}
\item For any $C\in \AllCl$, $\{C'\in\AllCl\mid C\subset C'\}$ is a finite $\subseteq$-chain. \label{finite-chainCUp}

\hide{~\\
WRONG
\item If $C\neq C'$, then $D(C)$ and $D(C')$ and disjoint.\label{DCdisjoint-2Wrong}
}
\end{enumerate}
\end{claim}
\begin{proof}
To prove the first statement, take $c$ in $C\cap C'$. Let $n=\ind{V},~n'=\ind{V'}$.
Then $$V=[c]_n\subseteq [c]_{n'}=V'.$$

It follows that $\overline{V}\subseteq \ov{V'}$: indeed,
if $a\in \overline{V}$, then $\ind{a}>n$ by Claim \ref{claim:ind-a-greater} and the definition of $\overline{V}$,
and so $\ind{a}>n'$, which implies $a\in \overline{V'}$.
Hence, the relation $R\restr \overline{V}$ is included in the relation $R\restr \overline{V'}$.
Assume $c'\in C$. Then  we have $(c,c') \in (R\restr \overline{V})^*$, so
$(c,c') \in (R\restr \overline{V'})^*$; the latter means that $c'\in C'$, since
$C'$ is maximal in $G_{V'}$. Hence $C\subseteq C'$, which
completes the proof of the first statement.

The second statement is immediate from the first one:
if there is $c$ in $C\cap C'$, consider $V,V'\in \AllDef$ such that
$C\in \Cl{V}$ and $C'\in \Cl{V'}$, and compare $\ind{V}$ and $\ind{V'}$.

That $\{C'\in\AllCl\mid C\subset C'\}$ is a chain follows from \eqref{DCdisjoint-1}.
Let $$\Delta=\{V'\in\AllDef\mid \EE C'\in \Cl{V'}\, (C\subset C')\}.$$
Notice that every $V'$ from $\Delta$ includes $C$, so by \eqref{eq:AllPart}, $\Delta$ is a $\subseteq$-chain.
We have $C\in \Cl{V}$ for some $V\in \AllDef$.
By \eqref{AllC-ind-monot},  $\ind{V'}<\ind{V}$ for every $V'\in \Delta$, so the chain $\Delta$ is finite.
Observe that for every $V'\in\Delta$, there is a unique $C'\in \Cl{V'}$ such that
$C\subset C'$: indeed, distinct clusters are disjoint;
define $f(V')=C'$. So $f$ is a function from the finite set $\Delta$ to $\{C'\in\AllCl\mid C\subset C'\}$. By
the definition of $\clC$ and $\Delta$, $f$ is surjective, which completes the proof.
\end{proof}

Consider the poset $(\AllCl,\subseteq)$.
Let $\clC_{\min}$ be the set of all minimal elements of $(\AllCl,\subseteq)$, and
$$
\clC_0=\{C\in \AllCl\mid \EE C'\in\clC_{\min}  \, (C'\subseteq C) \}, \quad \clC_1=\AllCl{\setminus}\clC_0.
$$

\begin{claim}\label{claim:mainForS}
There exist pairwise disjoint subsets $T^+$, $T^-$, $S^+$, $S^-$ of $X$ such that:
\begin{enumerate}[\normalfont(a)]
\item
for every $C\in \clC_0$, the sets $C\cap T^+$
and $C\cap T^-$ are non-empty, and
$T^+\cup T^-\subseteq \bigcup \clC_{\min}$;
\item
for every $C\in \clC_1$, the sets $C\cap S^+$
and $C\cap S^-$ are non-empty, and
$S^+\cup S^-\subseteq \bigcup \clC_1$.
\hide{
\item for every $C\in\AllCl$,
$C\cap (S^+\cup T^+)\neq \emp$ and $C\cap (S^-\cup T^-) \neq \emp$.}
\end{enumerate}
\end{claim}
\begin{proof}
First, we show existence of $T^+$ and $T^-$.
For $C$ in $\AllCl_{\min}$, let $t^+(C)$ and $t^-(C)$ be two distinct elements of $C$
(recall that $C$ has at least two elements).
Put $T^+=\{t^+(C)\mid C\in\AllCl_{\min}\}$ and $T^-=\{t^-(C)\mid C\in\AllCl_{\min}\}$.
Every $C'\in\clC_0$ includes some $C\in \clC_{\min}$, and so we have
$C'\cap T^+\neq \emp$ and  $C'\cap T^-\neq \emp$.
Every two distinct elements of $\clC_{\min}$ are disjoint by Claim
\ref{claim:DCdisjoint}(\ref{DCdisjoint-1}),
and so $T^+$ and $T^-$ are disjoint as well:
indeed, $t^+(C)=t^-(C')$ would imply $C'\neq C$ and at the same time $C'\cap C\neq\emp$.

Now we define $S^+$ and $S^-$.
For $C$ in $\clC_1$, let
$d(C)$ be the cardinality of the set $\Delta(C)=\{C'\in\clC_1\mid C\subset C'\}$.
By Claim \ref{claim:DCdisjoint}(\ref{finite-chainCUp}), $\Delta(C)$ is finite. 
By recursion on $d(C)$ we define
elements $a^+(C)$, $a^-(C)$ of $C\in\clC_1$.
Notice that $C$ is infinite, since $C\notin \clC_0$.
If $d(C)=0$, let $a^+(C),a^-(C)$ be two distinct elements in $C$.
Otherwise consider the set
$$C\setminus (\{ a^+(C')\mid C'\in\Delta(C)\}\cup \{a^-(C')\mid C'\in\Delta(C)\}).$$
Clearly, this set is infinite;
let $a^+(C),a^-(C)$ be two distinct elements in this set.
By the construction, we have
\begin{equation}\label{eq:disj-a}
\AA C'\in\Delta(C)  \,(a^+(C)\neq a^-(C') \;\&\; a^-(C)\neq a^+(C') ).
\end{equation}
Put $$S^+=\{a^+(C)\mid  C\in \clC_1\},~ S^-=\{a^-(C)\mid  C\in \clC_1\}.$$

Since $a^+(C),a^-(C)\in C$ for every $C$ in $\clC_1$,
the sets $C\cap S^+$, $C\cap S^-$ are non-empty
and
$S^+$ and $S^-$ are subsets of $\bigcup \clC_1$.

Let us check that $S^+$ and $S^-$ are disjoint. For the sake of contradiction,
assume that they are not. Then there are $C,C'\in \clC_1$ such that
$a^+(C)=a^-(C')$. Hence $C\neq C'$. Since
$a^+(C)\in C$ and $a^-(C')\in C'$,
 if follows that $C\cap C'\neq \emp$.  By Claim \ref{claim:DCdisjoint}(\ref{DCdisjoint-1}),
we have that $C\subset C'$ or  $C'\subset C$. 
Both cases contradict \eqref{eq:disj-a}: indeed, in the first case
$C'\in\Delta(C)$,
and in the second case
$C\in\Delta(C')$. This
contradiction proves that $S^+$ and $S^-$ are disjoint.

Notice that
\begin{equation}\label{eq:disj-cupC}
(\bigcup \clC_{\min})\cap (\bigcup \clC_1)=\emp;
\end{equation}
indeed, if $c\in C\in\clC_{\min}$ and
$c\in C'\in\clC_1$, then
by Claim
\ref{claim:DCdisjoint}(\ref{DCdisjoint-1}),
$C\subseteq C'$ or $C'\subset C$;
but the first implies that $C'\in\clC_0$, which contradicts the definition of $\clC_1$,
and the latter contradicts the minimality of $C$ in $(\AllCl,\subseteq)$.

It follows from \eqref{eq:disj-cupC} that  $T^+$, $T^-$, $S^+$, $S^-$ are pairwise disjoint.
\end{proof}

Consider sets $T^+, T^-, S^+, S^-$ satisfying the above claim. Put
$$S=T^+\cup S^+.$$
By the above claim, $C\cap (S^+\cup T^+) \neq \emp$ and
 $C\cap (S^-\cup T^-) \neq \emp$
for every $C\in\AllCl$.
Since $X{\setminus}S\supseteq T^-\cup S^-$, we have:
\begin{equation}\label{eq:mainPropOfS}
 \AA C\in\AllCl\, (C\cap S\neq \emp\, \&\, C\cap (X{\setminus}S) \neq \emp).
\end{equation}

\bigskip

\subparagraph{{\bf Embedding algebras}}
Since the logic of $G=(X,\refl{R},R_1,\ldots R_{l})$ is locally tabular,
the logic of the frame $(X,\refl{R})$ is locally tabular as well.
By Proposition \ref{prop:2finite-to-subs-m-n}, there is $h<\omega$ such that for every subset $V$ of $X$,
the height of $(V,\refl{R}\restr V)$ is not greater than $h$.
In particular, we have:
\begin{claim}\label{claim:finite_hOfGV}
For all $V\in\AllDef$, the height of
$G_V$ is finite.
\end{claim}
Recall that for a set $V\subseteq X$, $R_V$ denotes the restriction of $R$ to $V$,
and $R_V^*$ denotes its transitive reflexive closure.

The following claim is the key step towards expressing $Q_n$ in terms of $Q,E,S$.

\begin{claim}\label{claim:main}
Let $V\in \AllDef$, $n=\ind{V}$.
For every $a\in V$, we have:
$a\in Q_n$ iff
\begin{eqnarray}
  &&\text{$R_V^*(a)\subseteq Q$, and} \label{eq:cQ2} \hide{\text{Simpler: a in Q. No, what if maximal in $G_V$
sees it?}
}\\
  &&(\text{$R_V^*(a) \subseteq E$ or $R_V^*(a) \subseteq X{\setminus}E$})\text{, and  } \label{eq:C3}\\
  &&(\text{$R_V^*(a) \subseteq S$ or $R_V^*(a) \subseteq X{\setminus}S$}).  \label{eq:cS}
\end{eqnarray}
\end{claim}
\begin{proof}
``Only if''. Assume that
$a\in Q_n$.  
We have $R_V(a)=\emp$ by the definition of a defect, so $R^*_V(a)$ is the singleton $\{a\}$, which implies \eqref{eq:cQ2}, \eqref{eq:C3}, and \eqref{eq:cS}.

``If''.
Assume \eqref{eq:cQ2}, \eqref{eq:C3}, and \eqref{eq:cS} hold. From \eqref{eq:cQ2} we have $a\in Q$.
By Claim \ref{claim:ind-a-greater}, $\ind{a}\geq n$.

For the sake of contradiction, assume that $\ind{a}>n$, and show that
\begin{equation}\label{eq:contrMainStep}
\text{there exist $c,c'\in R_{\overline{V}}^*(a)$ such that $c\in S$ and $c'\notin S$.}
\end{equation}

Observe that $R_{\overline{V}}\subseteq R_V$.
Since $\ind{a}>\ind{V}$, $a$ belongs to $G_V$.
Since the height of $G_V$ is finite (Claim \ref{claim:finite_hOfGV}),
$R_{\overline{V}}^*(a)$ includes a maximal in $G_V$ cluster $C$.
Consider $b\in C$.
By Claim \ref{claim:ranks}, $\rk{b}\geq \rk{V}$, and if $\rk{b}>\rk{V}$, then
$bRb'$ for some $b'\in V\cap Q$ such that $bRb'$ and $\rk{b'}=\rk{b}-1$.
The latter is impossible due to \eqref{eq:C3}, since both $b$ and $b'$ are in $R_V^*(a)$, but exactly one of them is in $E$.
Hence,
\begin{equation}\label{eq:samerankBandV}
\text{$\rk{b}=\rk{V}$}.
\end{equation}
Since $b\in \overline{V}$, $\ind{b}>n$,
and so there exists $b'\in V$ with $bRb'$  by Claim \ref{claim:ind}.
By Claim \ref{claim:rks-monot},
$\Sigma^\up([b]_{\ind{b}})\supseteq \Sigma^\up(V)$, and so by
\eqref{eq:samerankBandV} we obtain $\Sigma^\up([b]_{\ind{b}})=\Sigma^\up(V)$.
It follows that
$V\notin\Sigma^\up([b]_{\ind{b}})$, and so
$b'$ is not a defect of $V$.
Hence, $b'$ is not in $Q_n$; due to \eqref{eq:cQ2}, $b'\in Q$ and so $b'\in \overline{V}$. Since
$b\in Q$, $(b,b)\notin R$. It follows that $b'\neq b$.
Since
$C$ is maximal in $G_V$, $b'\in C$. Since  $C$ is not a singleton, $C\in\AllCl$.
Now \eqref{eq:contrMainStep} follows from \eqref{eq:mainPropOfS}.

Since $R_{\overline{V}}\subseteq R_V$, \eqref{eq:contrMainStep} contradicts
\eqref{eq:cS}, which completes the proof of the claim.
\end{proof}

Consider the valuation $\theta':\{p_0,\ldots,p_{k-1}, q,e,s\}\to\clP(X)$ such that $\theta'$ extends
$\theta$ and  $\theta'(q)=Q$,  $\theta'(e)=E$, $\theta'(s)=S$.
Let $\sim$ be the equivalence induced in $(G,\theta')$ by
all formulas in variables $\{p_0,\ldots,p_{k-1}, q,e,s\}$.

\newcommand\psiq[1]{\psi_{#1}}
\def\psiqn{\psiq{n}}
\begin{claim}\label{claim:express-q-n}
There exist formulas $\{\psiqn\mid n<\omega\}$ in variables $\{p_0,\ldots,p_{k-1}, q,e,s\}$
such that
\begin{equation}\label{eq:psi-q-n}
 (G,\theta'),a\mo \psiqn \text{ iff } a\in Q_n.
\end{equation}
\end{claim}
\begin{proof}
By induction on $n$.
Assume that  $\psiq{i}$ are defined for all $i<n$.

Let $V\in X/{\sim_n}$.
Since $X/{\sim_n}$ is finite (Claim \ref{claim:simi-are-finandtuned}), there is
a formula $\vf(V)$ in variables $p_0,\ldots,p_{k-1}$ and $q_{i}$ with $i<n$ such that
$$(G,\theta_n),a\mo \vf(V) \text{ iff } a\in V. $$
By induction hypothesis, there is a formula $\psi(V)$ in variables
$p_0,\ldots,p_{k-1},q,e,s$  such that
\begin{equation}\label{eq:expression_of_V}
(G,\theta'),a\mo \psi(V) \text{ iff } a\in V.
\end{equation}
By Proposition \ref{prop:relativised-m-trans},
there is $m<\omega$ such that
for every $a\in V$ we have: 
\begin{eqnarray*}
  (G,\theta'),a \mo \trSub{\Box_0^{\leq m}q}{\psi(V)} &\text{ iff }& R_V^*(a)\subseteq Q, \\
  (G,\theta'),a\mo \trSub{\Box_0^{\leq m}e}{\psi(V)}  \vee  \trSub{\Box_0^{\leq m}\neg e}{\psi(V)}   &\text{ iff } &
(R_V^*(a) \subseteq E \text{ or } R_V^*(a) \subseteq X{\setminus}E),\\
  (G,\theta'),a\mo \trSub{\Box_0^{\leq m} s}{\psi(V)} \vee \trSub{\Box_0^{\leq m}\neg s}{\psi(V)}  & \text{ iff } & (R_V^*(a) \subseteq S \text{ or }R_V^*(a) \subseteq X{\setminus}S).
\end{eqnarray*}
Let $\chi(V)$ be the conjunction of the three modal formulas mentioned in the above equivalences.
By Claim \ref{claim:main}, for every $V\in \AllDef$ with $\ind{V}=n$, for every $a\in V$,
\begin{equation}\label{eq:expression_of_Qn}
  (G,\theta'),a\mo \chi(V) \text { iff } a\in Q_n.
\end{equation}
Let $\clV$ be the set of defective sets in $X{/}{\sim_n}$. By Claim \ref{claim:simi-are-finandtuned}, $\clV$ is finite.
Put
$$\psiqn=\bigvee\{\psi(V)\con\chi(V)\mid V\in \clV\}.$$
We have
$Q_n=\bigcup\{\Def{V} \mid V\in \clV\}$, and
so \eqref{eq:psi-q-n} follows from \eqref{eq:expression_of_V} and \eqref{eq:expression_of_Qn}.
\end{proof}

From Claim \ref{claim:express-q-n}, it follows that all
$Q_n$ belong to the subalgebra $B$ of $\Alg{G}$ generated by
$P_0,\ldots P_{k-1}, Q,E,S$. The algebra $B$ is finite due to the local finiteness of $\Alg{G}$.
Since all $Q_n$  are disjoint (Claim \ref{claim:Qndisjoint}), it follows that
there is $N<\omega$ such that $Q_N=\emp$.
The partition $X{/}{\sim}_N$ is finite and tuned in
$G$ by Claim \ref{claim:simi-are-finandtuned}, and since there are no defective $N$-classes, it is tuned in
$F$ due to Claim \ref{claim:noDef-tuned}.

Finally, consider the initial algebra $A$, the subalgebra of $\Alg{F}$ generated by $P_0,\ldots, P_{k-1}$.
By Proposition \ref{prop:betweenSizeOfAlgAndPart}\eqref{prop:SizePartToOfAlg}, every element of $A$ is the union of some $\sim_N$-classes.
Every such union is in $B$, so
$
\dom{A}\subseteq \dom{B}.
$
\end{proof}
\begin{lemma}\label{lem:refl-reduction-key-poly}  If the logic of an $\Al$-frame $\refl{\frF}$ is locally tabular, then
for every $k<\omega$, every $k$-generated subalgebra of $\Alg{F}$
is contained in a $(k+3|\Al|)$-generated subalgebra of $\Alg{\refl{F}}$.
\end{lemma}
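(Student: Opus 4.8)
The plan is to reduce the claim to $n := |\Al|$ successive applications of Lemma \ref{lem:refl-reduction-key-uni}, reflexivizing one relation at a time. Fix an enumeration $\Al = \{\Di_1, \ldots, \Di_n\}$ and, for $0 \le i \le n$, let $F_i$ be the $\Al$-frame on $X$ obtained from $F$ by replacing $R_{\Di_j}$ with its reflexive closure $\refl{R_{\Di_j}}$ for each $j \le i$ while keeping $R_{\Di_j}$ for $j > i$. Thus $F_0 = F$, $F_n = \refl{F}$, and for each $i \ge 1$ the frames $F_{i-1}$ and $F_i$ differ only in the relation labelled $\Di_i$, which is $R_{\Di_i}$ in $F_{i-1}$ and $\refl{R_{\Di_i}}$ in $F_i$, all other relations coinciding. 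Since the labelling of the modalities affects neither the complex algebra nor its subalgebras, each such pair is an instance of the situation of Lemma \ref{lem:refl-reduction-key-uni}, with $R = R_{\Di_i}$ as the distinguished relation and the remaining $n-1$ relations in the role of $R_1, \ldots, R_l$.

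First I would propagate local tabularity downward along the chain, proving that $\Log{F_i}$ is locally tabular for every $i$ by downward induction. The base case $i = n$ is the hypothesis, since $F_n = \refl{F}$. For the step, I would assume $\Log{F_i}$ is locally tabular and invoke Theorem \ref{thm:Maltsev}\eqref{thm:Maltsev-kLF} applied to the one-element class $\{F_i\}$ to obtain a function $f\colon\omega\to\omega$ bounding the size of every $j$-generated subalgebra of $\Alg{F_i}$. Lemma \ref{lem:refl-reduction-key-uni}, whose hypothesis is met precisely because $\Log{F_i}$ is locally tabular, then places every $j$-generated subalgebra of $\Alg{F_{i-1}}$ inside a $(j{+}3)$-generated subalgebra of $\Alg{F_i}$, whose size is at most $f(j{+}3)$. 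Hence $\{\Alg{F_{i-1}}\}$ is uniformly locally finite, and Theorem \ref{thm:Maltsev}\eqref{thm:Maltsev-kLF} yields that $\Log{F_{i-1}}$ is locally tabular.

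With local tabularity of all $F_i$ in hand, I would then chain the subalgebra containments. Fixing $k$, I would show by downward induction on $i$ that every $k$-generated subalgebra of $\Alg{F_i}$ is contained in a $(k + 3(n-i))$-generated subalgebra of $\Alg{\refl{F}}$; the case $i = n$ is trivial. For the step from $i$ to $i-1$, given a $k$-generated subalgebra $A$ of $\Alg{F_{i-1}}$, Lemma \ref{lem:refl-reduction-key-uni} (applicable since $\Log{F_i}$ is locally tabular) supplies a $(k{+}3)$-generated subalgebra $B$ of $\Alg{F_i}$ with $\dom{A} \subseteq \dom{B}$; the induction hypothesis for $i$ embeds $B$ into a $\bigl((k{+}3) + 3(n-i)\bigr)$-generated subalgebra $B'$ of $\Alg{\refl{F}}$, so that $A \subseteq B'$ and $(k{+}3) + 3(n-i) = k + 3\bigl(n-(i-1)\bigr)$. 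Taking $i = 0$ gives the lemma, with $3(n-0) = 3|\Al|$ extra generators. The genuine content lies entirely in the single-relation Lemma \ref{lem:refl-reduction-key-uni}; the one delicate point is that the intermediate frames $F_i$ with $0 < i < n$ are only partially reflexive, and their local tabularity is not given, so the hypothesis of Lemma \ref{lem:refl-reduction-key-uni} must be re-established at each step. The hard part is therefore this bookkeeping, and its resolution rests on carrying the \emph{uniform} bound furnished by Maltsev's criterion (Theorem \ref{thm:Maltsev}) rather than mere local finiteness of the individual algebras $\Alg{F_i}$.
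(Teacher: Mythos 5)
Your proof is correct and follows essentially the same route as the paper's: reflexivize one relation at a time, apply Lemma \ref{lem:refl-reduction-key-uni} at each step, and re-establish local tabularity of the partially reflexivized intermediate frames via the uniform bound supplied by Theorem \ref{thm:Maltsev}. The paper merely folds your two downward inductions into a single induction on the number of relations still to be reflexivized, deriving the intermediate frame's local tabularity from the induction hypothesis rather than in a separate first pass.
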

\begin{proof}
Consider an $(n+m)$-frame $F=(X,(R_i)_{i<n},(S_i)_{i<m})$ such that all $S_i$ are reflexive,
and $\Log{\refl{F}}$ is locally tabular.
By induction on $n$, we show that every $k$-generated subalgebra of $\Alg{F}$
is contained in a $(k+3n)$-generated subalgebra of $\Alg{\refl{F}}$.
Basic $n=0$ is trivial. Let  $n>0$. Consider the frame $G=(X,(R_i)_{i<n-1},\refl{R_n},(S_i)_{i<m})$.
Notice that $\refl{G}=\refl{F}$.
By induction hypothesis,
every $k$-generated subalgebra of $\Alg{G}$
is contained in a $(k+3(n-1))$-generated subalgebra of $\Alg{\refl{F}}$.
In particular, it follows that $\Log{G}$ is locally tabular.  By
Lemma \ref{lem:refl-reduction-key-uni}, every $k$-generated subalgebra of $\Alg{F}$
is contained
in a $(k+3)$-generated subalgebra of $\Alg{G}$, and consequently,  in a
$(k+3n)$-generated subalgebra of $\Alg{\refl{F}}$.  This completes the proof of the induction step.

To complete the proof of the lemma, put $m=0$.
 \end{proof}

\begin{theorem}\label{thm:ref-to-irref} Let $\clF$ be a class of $\Al$-frames.
Then $\Log{\clF}$ is locally tabular iff $\Log{\refl{\clF}}$ is locally tabular.
\end{theorem}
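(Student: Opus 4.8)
The plan is to derive Theorem~\ref{thm:ref-to-irref} directly from Lemma~\ref{lem:refl-reduction-key-poly} together with Maltsev's criterion in the form of Theorem~\ref{thm:Maltsev}\eqref{thm:Maltsev-kLF}. The ``only if'' direction is immediate and requires no new work: since $\refl{\clF}$ is obtained from $\clF$ by adding the diagonal to each relation, reflexive closure is a definable operation, and in any case $\Log{\refl{\clF}}$ is an extension of a logic obtained from $\Log{\clF}$, so local tabularity of $\Log{\clF}$ transfers to $\Log{\refl{\clF}}$. More directly, one checks that a partition tuned in $\frF$ remains tuned in $\refl{\frF}$ (adding reflexive loops cannot create a defect-type failure of the tuning condition \eqref{eq:part}), so uniform tunability of $\clF$ yields uniform tunability of $\refl{\clF}$, and Theorem~\ref{thm:LFviaTuned}\eqref{thm:LFviaTuned-lfLog} gives the claim.

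The substance is the ``if'' direction. Assume $\Log{\refl{\clF}}$ is locally tabular. By Theorem~\ref{thm:Maltsev}\eqref{thm:Maltsev-kLF}, this means the class $\{\Alg{\refl{F}}\mid F\in\clF\}$ is uniformly locally finite: there is a function $g:\omega\to\omega$ such that every $j$-generated subalgebra of any $\Alg{\refl{F}}$ has at most $g(j)$ elements. I would now fix $k<\omega$ and a frame $F\in\clF$, and take any $k$-generated subalgebra $A$ of $\Alg{F}$. The key point is that Lemma~\ref{lem:refl-reduction-key-poly} applies to each individual frame: since $\Log{\refl{F}}$ is locally tabular (being an extension of the locally tabular logic $\Log{\refl{\clF}}$, as $\{F\}\subseteq\clF$ forces $\Log{\refl{\clF}}\subseteq\Log{\refl{F}}$), the lemma tells us that $A$ is contained in some $(k+3|\Al|)$-generated subalgebra $B$ of $\Alg{\refl{F}}$. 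Hence $|A|\le|B|\le g(k+3|\Al|)$.

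The bound $g(k+3|\Al|)$ depends only on $k$ and on the fixed finite alphabet $\Al$, and crucially \emph{not} on the choice of $F\in\clF$. Therefore the function $f(k)=g(k+3|\Al|)$ witnesses that $\{\Alg{F}\mid F\in\clF\}$ is uniformly locally finite, and Theorem~\ref{thm:Maltsev}\eqref{thm:Maltsev-kLF} gives that $\Log{\clF}$ is locally tabular. I would want to be careful on one modeling point: Lemma~\ref{lem:refl-reduction-key-poly} is stated for a single frame $\refl{\frF}$ whose logic is locally tabular, so I must justify that $\Log{\refl{F}}$ is locally tabular for each $F\in\clF$ separately---this is precisely the monotonicity of $\Log$ (smaller class of structures gives a larger, hence still locally tabular, logic), which is routine.

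The main obstacle, in the sense of where all the real difficulty lives, is entirely packaged inside Lemma~\ref{lem:refl-reduction-key-poly} (and its single-relation core, Lemma~\ref{lem:refl-reduction-key-uni}): the construction of the three auxiliary sets $Q$, $E$, $S$ that let a $k$-generated subalgebra over the irreflexive frame be absorbed into a boundedly-larger subalgebra over the reflexive frame. Given that lemma, the theorem itself is a short Maltsev-criterion argument, and the only thing demanding genuine attention is tracking uniformity---ensuring the generator overhead $3|\Al|$ is a constant independent of the frame and that $g$ comes from the hypothesis on the whole class rather than from a single frame.
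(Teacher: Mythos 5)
Your proof is correct and follows essentially the same route as the paper: the ``if'' direction is exactly the paper's argument (apply Lemma~\ref{lem:refl-reduction-key-poly} frame-by-frame, using that $\Log{\refl{\clF}}\subseteq\Log{\refl{F}}$ for each $F\in\clF$, and combine with Maltsev's criterion, Theorem~\ref{thm:Maltsev}\eqref{thm:Maltsev-kLF}, to get the uniform bound $g(k+3|\Al|)$). For the easy direction the paper uses the translation $\reflTr{\vf}$ instead of your (equally correct) observation that a partition tuned in $\frF$ stays tuned in $\refl{\frF}$, but both arguments are routine.
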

\begin{proof}
The ‘only if’ direction is clear due to the following well-known translation:
put
$\reflTr{\bot}=\bot$, $\reflTr{p}=p$ for variables, $\reflTr{\vf\imp \psi}=\reflTr{\vf}\imp \reflTr{\psi}$,
and $\reflTr{\Di \psi}= \Di\reflTr{\psi}\vee \reflTr{\psi}$ for $\Di\in\Al$.
It is straightforward that
$$\clF\mo\reflTr{\vf} \text{ iff } \refl{\clF}\mo\vf.$$
So the local tabularity of $\Log{\clF}$ implies the local tabularity of $\Log{\refl{\clF}}$.

The ‘if’ part follows from Lemma \ref{lem:refl-reduction-key-poly} and Theorem
\ref{thm:Maltsev}\eqref{thm:Maltsev-kLF}. Namely,
since $\Log{\refl{\clF}}$ is locally tabular, the class $\clC$ of algebras
$\{\Alg{F}\mid F\in\refl{\clF}\}$ is uniformly locally finite, that is
there exists a function $f:\omega\to \omega$ such that the cardinality of a subalgebra of any $A\in \clC$ generated by
$k<\omega$ elements does not exceed $f(k)$. It follows that
the class $\clD=\{\Alg{F}\mid F\in\clF\}$ is uniformly locally finite as well:
for every algebra $B$ in $\clD$, the size of its subalgebra generated by $k$ elements   is bounded by $f(k+3|A|)$, according to Lemma \ref{lem:refl-reduction-key-poly}.
\end{proof}

We do not know if an analogous result holds for the case of locally finite algebras.
Finite height was crucial in the proof of Lemma \ref{lem:refl-reduction-key-uni} (see
Claim \ref{claim:finite_hOfGV}). However,
local finiteness of an algebra does not imply this property, because, for example, the algebra
of the frame $(\omega,\leq)$ is locally finite (follows easily from Theorem \ref{thm:LFviaTuned}\eqref{thm:LFviaTuned-lfA}).

\section{Transfer results}\label{sec:transfer}
For disjoint alphabets of modalities $\AlA$ and $\AlB$, and logics $L_1$ in the language of $\Al$ and $L_2$ in the language of $\AlB$, the {\em fusion $L_1*L_2$ of $L_1$ and $L_2$} is the smallest logic in the language of $\AlA\cup\AlB$ that contains $L_1\cup L_2$. In the case of consistent $L_1$ and $L_2$,
the fusion is a conservative extension of $L_1$ and of $L_2$ \cite{Thomason1980-yj}, and hence
local tabularity of $L_1$ and $L_2$ is a necessary condition for local tabularity of the fusion.
But it is not sufficient.
While it is known that Kripke completeness, finite model property,
and decidability of logics are preserved under
this operation \cite{KrachtWolter1991,fine1996transfer, Wolter1996fusions}, this is not the case for local tabularity. For example, let $\Al=\{\Di_a\}$, $\AlB=\{\Di_b\}$; it is
straightforward that the fusion $\LS{5}*\LS{5}$ of two instances of a locally tabular logic $\LS{5}$ is not even 1-finite
(one can easily construct a non-pretransitive
frame of $\LS{5}*\LS{5}$).
Notice however that every proper extension of the logic
$\LS{5}*\LS{5}+\{\Di_a\Di_b p \leftrightarrow \Di_b \Di_a p\}$ is locally tabular \cite{NickS5}.
This gives examples of locally tabular {\em products of modal logics}
\cite{GabbShehtProdI}.
Another family of locally tabular modal products was identified
in \cite{Shehtman2012}, see also \cite{Shehtman2018}. For related systems, {\em intuitionistic modal logics} (considered in the sense of \cite[Section 10.3]{zakharyaschev_many-dimensional_2003}),
certain locally tabular families were identified in \cite{Guram98MIPCI}, \cite{Guram-Revaz}, and \cite{Bezhanishvili2001}.\footnote{See also a recent paper \cite{Bezhanishvili2023local}.}
While the product of two modal logics inherits
both pretransitivity and finite height, the local tabularity is not preserved in general.\footnote{
A characterization of locally tabular products of modal logics was
provided in a recent manuscript \cite{ShapSl2024}.}

\smallskip
Below we consider operations on frames and on logics  which preserve local tabularity.

\subsection{First semantic transfer result}

\begin{definition}\label{def:sum-poly-extra}
Consider an $\Al$-frame $\frI=(Y,(S_\Di)_{\Di\in \Al})$
and a family $(\frF_i)_{i\in Y}$ of $\Al$-frames
$\frF_i=(X_i,(R_{i,\Di})_{\Di\in \Al})$.
The {\em sum
 $\LSum{\frI}{\frF_i}$ of the family $(\frF_i)_{i\in Y}$ over
 $\frI$}
 is the $\Al$-frame $(\bigsqcup_{i \in Y} X_i, (R^\Sigma_\Di)_{\Di\in \Al})$,
 where
 $\bigsqcup_{i\in Y}{X_i}=\bigcup_{i\in Y}(\{i\}\times X_i)$
is the  disjoint union of sets $X_i$, and
$$(i,a)R^\Sigma_\Di (j,b) \quad \tiff \quad (i = j \,\&\,a R_{i,\Di} b) \text{ or } (i\neq j \,\&\, iS_\Di j).$$
For  classes $\clI$, $\clF$  of $\Al$-frames,
let $\LSum{\clI}{\clF}$  be the class of all sums
$\LSum{\frI}{\frF_i}$ such that $\frI \in \clI$ and  $\frF_i\in \clF$ for every $i$ in $\frI$.
\end{definition}

A typical example of sum is a preorder, which is, up to an isomorphism, the sum
of its clusters over its skeleton. 

According to the definition, the relations $R^\Sigma_\Di$ are independent of reflexivity of the relations $S_\Di$:
\begin{equation}\label{eq:sumIndependentOfRefl}
  \LSum{\frI}{\frF_i}=\LSum{\refl{\frI}}{\frF_i}=\LSum{\irrefl{\frI}}{\frF_i},
\end{equation}
where $\irrefl{\frI}=(Y,(\irrefl{S_\Di})_{\Di\in \Al})$, $\irrefl{S_\Di}=S_\Di{\setminus}\{(a,a)\mid a\in Y\}$.

\begin{theorem}\label{thm:Main1}
Let $\clF$ and $\clI$ be classes of $\Al$-frames.
If the modal logics $\Log{\clF}$ and
$\Log{\clI}$ are locally tabular, then the logic $\Log{\LSum{\clI}{\clF}}$  is locally tabular as well.
\end{theorem}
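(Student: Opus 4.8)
The plan is to use the tuned-partition characterization of local tabularity, Theorem~\ref{thm:LFviaTuned}\eqref{thm:LFviaTuned-lfLog}: it suffices to show that the class $\LSum{\clI}{\clF}$ is uniformly tunable. A first reduction makes the index relations irreflexive. By \eqref{eq:sumIndependentOfRefl} every sum satisfies $\LSum{\frI}{\frF_i}=\LSum{\irrefl{\frI}}{\frF_i}$, so $\LSum{\clI}{\clF}=\LSum{\irrefl{\clI}}{\clF}$; and applying Theorem~\ref{thm:ref-to-irref} twice, through $\refl{(\irrefl{\clI})}=\refl{\clI}$, local tabularity of $\Log{\clI}$ yields local tabularity of $\Log{\irrefl{\clI}}$. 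Hence I may assume that every relation $S_\Di$ of every index frame $\frI=(Y,(S_\Di)_{\Di\in\Al})$ is irreflexive, so that a cross-summand edge $(i,a)R^\Sigma_\Di(j,b)$ with $i\neq j$ is present exactly when $iS_\Di j$. Now Theorem~\ref{thm:LFviaTuned}\eqref{thm:LFviaTuned-lfLog} applied to $\Log{\clF}$ and $\Log{\irrefl{\clI}}$ supplies functions $f,g\colon\omega\to\omega$ witnessing that $\clF$ is $f$-tunable and that the (irreflexive) index frames are $g$-tunable.

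Fix a sum $\frG=\LSum{\frI}{\frF_i}$ and a finite partition $\clV$ of $\dom{\frG}$ with $r=|\clV|$. First I would tune vertically: for each $i$ the partition of $X_i$ induced by $\clV$ has at most $r$ blocks, so $f$-tunability gives a tuned refinement $\clU^i$ of it in $\frF_i$ with at most $N:=f(r)$ blocks, each contained in a single $\clV$-block. Next I record the \emph{type} of each partitioned summand $\frF_i$: the finite data consisting of the number of blocks of $\clU^i$, the $\clV$-block (the \emph{colour}) containing each of them, and, for every modality $\Di$ and every ordered pair of blocks, whether the first $R_{i,\Di}$-sees the second. Since $\clU^i$ has at most $N$ blocks there are only finitely many types, say $T=T(r)$, and for each type I fix a labelling of its blocks, inducing a matching of the blocks of any two summands of the same type. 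Colouring each index $i$ by the type of $\frF_i$ partitions $Y$ into at most $T$ classes; refining this by $g$-tunability produces a partition $\mathcal{W}$ of $Y$ that is tuned in $\frI$, has at most $g(T)$ blocks, and inside each of whose blocks all summands share one type. I then define $\clU$ by declaring $(i,a)\sim(j,b)$ iff $i,j$ lie in one block $W$ of $\mathcal{W}$ and $a,b$ lie in matching blocks of $\clU^i,\clU^j$. Thus $|\clU|\leq N\cdot g(T)$, a bound depending only on $r$, and $\clU$ refines $\clV$ because matching blocks carry the same colour.

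The heart of the argument is that $\clU$ is $R^\Sigma_\Di$-tuned for each $\Di$. Fix blocks $U=(W,s)$ and $U'=(W',s')$ of $\clU$ and suppose some point of $U$ sees some point of $U'$; I must propagate this to every $(i',a')\in U$. A witnessing edge is either \emph{vertical} ($i=j$, which forces $W=W'$ and records that block $s$ sees block $s'$ in the common type) or \emph{horizontal} ($i\neq j$ with $iS_\Di j$, $i\in W$, $j\in W'$). In the vertical case, for arbitrary $(i',a')\in U$ the common type guarantees that block $s$ sees block $s'$ in $\frF_{i'}$ too, and $R_{i',\Di}$-tunedness of $\clU^{i'}$ supplies a witness $b'$ in block $s'$, with $(i',b')\in U'$ since $i'\in W=W'$. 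In the horizontal case, $S_\Di$-tunedness of $\mathcal{W}$ gives for each $i'\in W$ some $j'\in W'$ with $i'S_\Di j'$; here irreflexivity forces $i'\neq j'$, so $(i',a')$ sees all of summand $j'$, in particular its nonempty block $s'$, giving a witness in $U'$. This is exactly where dropping index reflexivity is essential: with reflexive $S_\Di$ a tuned $\mathcal{W}$ may fail to make ``$\exists\,j'\neq i'$ in $W'$ with $i'S_\Di j'$'' uniform over $i'\in W$. Having checked tunedness, $\LSum{\clI}{\clF}$ is uniformly tunable with bound $n(r)=N\cdot g(T(r))$, and Theorem~\ref{thm:LFviaTuned}\eqref{thm:LFviaTuned-lfLog} yields local tabularity of $\Log{\LSum{\clI}{\clF}}$. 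I expect the main obstacle to be precisely this irreflexivity subtlety in the tuning verification, together with organizing the summand-type bookkeeping so that the cross-summand matching of blocks is coherent.
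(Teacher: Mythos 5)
Your proof is correct and follows the same overall strategy as the paper: the same reduction to irreflexive index frames via \eqref{eq:sumIndependentOfRefl} and Theorem \ref{thm:ref-to-irref}, the same use of the uniform-tunability criterion (Theorem \ref{thm:LFviaTuned}\eqref{thm:LFviaTuned-lfLog}), a two-level tuning (summands, then indices) followed by the same vertical/horizontal case split, with irreflexivity of $S_\Di$ used at exactly the same spot to guarantee $i'\neq j'$. The one genuine divergence is how you coordinate the partitions of different summands. The paper tunes the \emph{disjoint sum} $\bigsqcup_{i\in Y}\frF_i$ in one step --- which requires passing from $\clF$ to the class $\clF_1$ of \emph{all} frames of $\Log{\clF}$, since $\clF$ itself need not be closed under disjoint sums --- and then coordinates summands through the trace function $t(i)=\{V\in\clV\mid V\cap(\{i\}\times X_i)\neq\emp\}$, refining $Y$ by $t$ before tuning in $\frI$; the final partition is simply the common refinement, with bound $f(n)\cdot g(2^{f(n)})$. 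You instead tune each $\frF_i$ separately (so you only ever invoke $f$-tunability on members of $\clF$) and coordinate via a finite classification of partitioned summands into ``types'' with fixed block-labellings, colouring $Y$ by type before tuning. Both yield a uniform bound depending only on $|\clV_0|$ (and $|\Al|$); the paper's route is shorter and avoids the labelling bookkeeping (in particular the mild care needed to fix a representative labelling per isomorphism class of types), while yours is self-contained at the level of $\clF$ and makes the cross-summand matching of blocks fully explicit. Your anticipated obstacles (the irreflexivity point and the coherence of the matching) are real but are handled correctly in your argument.
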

\begin{proof}
Let $\clI_1=\{\irrefl{\frI}\mid \frI\in \clI\}$.
By \eqref{eq:sumIndependentOfRefl}, $\LSum{\clI}{\clF}=\LSum{\clI_1}{\clF}$.
Since $\refl{\clI}=\refl{(\clI_1)}$, it follows from Theorem \ref{thm:ref-to-irref}
that $\Log{\clI_1}$ is locally tabular.

Let $\clF_1$ be the class of all frames of the logic $\Log{\clF}$.
By Theorem \ref{thm:LFviaTuned}\eqref{thm:LFviaTuned-lfLog}, $\clF_1$ and $\clI_1$ are uniformly tunable, that is,
there are functions $f,g:\omega\to \omega$ such that $\clF_1$ is $f$-tunable and $\clI_1$ is $g$-tunable.
Without loss of generality, we can assume that $f$ and $g$ are monotone.
We will show that the class $\LSum{\clI_1}{\clF}$ is uniformly tunable.

Consider a sum $\LSum{\frI}{\frF_i}$,
where all $\frF_i=(X_i,(R_{i,\Di})_{\Di\in \Al})$ are in $\clF$, $\frI=(Y,(S_\Di)_{\Di\in \Al})\in\clI_1$. Then $\bigsqcup_{i\in Y}{X_i}$ is
the domain of $\LSum{\frI}{\frF_i}$.
Let $\clV_0$ be a finite partition of $\bigsqcup_{i\in Y}{X_i}$.
Since all $\frF_i$ are in $\clF$, the disjoint sum $\bigsqcup_{i\in Y} \frF_i$ is in $\clF_1$. Hence, there exists a refinement $\clV$ of $\clV_0$
such that $\clV$ is tuned in $\bigsqcup_{i\in Y}{\frF_i}$
and
\begin{equation}\label{eq:sizeOfV}
|\clV|\leq f(|\clV_0|).
\end{equation}


Consider the following function $t: Y\to \clP(\clV)$:
for $i\in Y$, put $$t(i)=\{V\in\clV\mid \EE a\in X_i\, (i,a)\in V\}.$$
On $Y$, put $$i\sim  j \text{ iff } t(i)=t(j),$$
and let $\clU_0$ be the quotient $Y{/}{\sim}$.
By the construction,
\begin{equation}\label{eq:sizeOfU_0}
|\clU_0|\leq 2^{|\clV|}.
\end{equation}
Since $\clI_1$ is $g$-tunable, there exists
a finite refinement $\clU$ of $\clU_0$ such that  $\clU$  is tuned in $\frI$ and
\begin{equation}\label{eq:sizeOfU}
|\clU|\leq g(|\clU_0|).
\end{equation}
Let $\clS$ be the quotient of $\bigsqcup_{i\in I}{X_i}$ by the following equivalence:
$$(i,a)\sim_\clS (i',a') \text{ iff } (i,a)\sim_\clV (i',a') \text{ and } i\sim_\clU i'.$$
Clearly, $\clS$ refines $\clV_0$, and also we have \begin{equation}\label{eq:sizeS}
|\clS|\leq |\clV|\times |\clU|.
\end{equation}

Define the function $h:\omega\to\omega$ as follows:
$$h(n)=f(n)\cdot g(2^{f(n)}).$$
According to \eqref{eq:sizeS}, \eqref{eq:sizeOfV}, \eqref{eq:sizeOfU_0}, and \eqref{eq:sizeOfU}, we have
\begin{equation}\label{eq:sizeOfS}
|\clS|\leq h(|\clV_0|).
\end{equation}

We claim  that $\clS$ is tuned in $\LSum{\frI}{\frF_i}$.
Assume that $(i,a)\sim_\clS (i',a')$ and
$(i,a)R^\Sigma_\Di (j,b)$. We need to show that there exists $(j',b')$ such that
\begin{equation}\label{eq:tunedS}
(i',a')R^\Sigma_\Di (j',b') \text{ and }(j,b)\sim_\clS (j',b').
\end{equation}

Consider two cases.

Case 1. Assume that $i\neq j$. Then we have $i S_\Di j$.
Since $(i,a)\sim_\clS (i',a')$, we have $i\sim_\clU i'$. Since $\clU$ is tuned in $\frI$, we have $i'S_\Di j'$ and
$j\sim_\clU j'$  for some $j'$.
Since $\clU$ refines $\clU_0$,
we have $t(j)=t(j')$. Let $V$ be the element of $\clV$ that contains $(j,b)$. Then $V\in t(j')$, and thus
there is $b'$ in $X_{j'}$ such that $(j',b')\in V$.
It follows that $(j,b)\sim_\clS (j',b')$.
Since $i'S_\Di j'$ and  $S_\Di$ is irreflexive, we have $i'\neq j'$. Thus
$(i',a')R^\Sigma_\Di (j',b')$, and so \eqref{eq:tunedS} follows.


Case 2. Assume that $i=j$. Since $(i,a)\sim_\clS (i',a')$, we have $(i,a)\sim_\clV (i',a')$.
By the definition of $R^\Sigma_\Di$, we have $a R_{i,\Di} b $.
Since $\clV$ is tuned in $\bigsqcup_{i\in Y}{\frF_i}$,
we have $(i,b)\sim_\clV (i',b')$ and $a'R_{i',\Di} b'$
for some $b'$ in $X_{i'}$.
Also, we have $i\sim_\clU i'$. Put $j'=i'$.
By the definition of $R^\Sigma_\Di$, we get
$(i',a')R^\Sigma_\Di (j',b')$, which proves \eqref{eq:tunedS}.

It follows that $\clS$ is tuned. According
to \eqref{eq:sizeOfS},
$\LSum{\clI_1}{\clF}$ is uniformly tunable.
\end{proof}


The construction in the above proof also shows the following.
\begin{corollary}
Let $\frI=(Y,(S_\Di)_{\Di\in \Al})$ be an $\Al$-frame where all $S_\Di$ are irreflexive.
If $(\frF_i)_I$ is a family of $\Al$-frames, and two algebras $\Alg{\bigsqcup_{i\in Y} \frF_i}$ and $\Alg{\frI}$
are locally finite,
then $\Alg{\LSum{\frI}{\frF_i}}$ is locally finite and hence the logic of
$\LSum{\frI}{\frF_i}$ has the finite model property.
\end{corollary}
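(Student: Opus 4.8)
The plan is to read this off the tuning construction already performed in the proof of Theorem \ref{thm:Main1}, simply dropping the bookkeeping of uniform size bounds. First I would translate the hypotheses via Theorem \ref{thm:LFviaTuned}\eqref{thm:LFviaTuned-lfA}: since $\Alg{\bigsqcup_{i\in Y}\frF_i}$ and $\Alg{\frI}$ are locally finite, the frames $\bigsqcup_{i\in Y}\frF_i$ and $\frI$ are tunable. The goal then reduces to showing that the sum $\LSum{\frI}{\frF_i}$ is tunable, since by the same theorem this yields local finiteness of $\Alg{\LSum{\frI}{\frF_i}}$.

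Next I would take an arbitrary finite partition $\clV_0$ of the domain $\bigsqcup_{i\in Y}X_i$ and run the construction from the proof of Theorem \ref{thm:Main1}, now invoking tunability rather than uniform tunability. Using tunability of $\bigsqcup_{i\in Y}\frF_i$, I would refine $\clV_0$ to a finite $\clV$ tuned in the disjoint sum; form the finite index partition $\clU_0 = Y/{\sim}$ induced by the map $t(i)=\{V\in\clV\mid \EE a\in X_i\;(i,a)\in V\}$, which is finite because $\clV$ is; use tunability of $\frI$ to refine $\clU_0$ to a finite $\clU$ tuned in $\frI$; and set $\clS$ to be the common refinement given by $(i,a)\sim_\clS (i',a')$ iff $(i,a)\sim_\clV (i',a')$ and $i\sim_\clU i'$. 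Then $\clS$ is finite and refines $\clV_0$.

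The verification that $\clS$ is tuned in $\LSum{\frI}{\frF_i}$ is word-for-word the argument of Theorem \ref{thm:Main1} (Cases 1 and 2): that argument never uses the bounds $f,g,h$, only that $\clV$ is tuned in the disjoint sum, that $\clU$ is tuned in $\frI$, and, in Case 1, that the relations $S_\Di$ are \emph{irreflexive}. This is precisely why the corollary assumes irreflexivity outright. In Theorem \ref{thm:Main1} irreflexivity was secured by passing through Theorem \ref{thm:ref-to-irref}, but that reflexivity-elimination is established only for (uniform) local tabularity of classes and is not known to hold for local finiteness of a single algebra; hence here it cannot be used and irreflexivity must be a hypothesis.

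Finally, having shown $\LSum{\frI}{\frF_i}$ tunable, Theorem \ref{thm:LFviaTuned}\eqref{thm:LFviaTuned-lfA} delivers local finiteness of $\Alg{\LSum{\frI}{\frF_i}}$, and the finite model property of its logic follows as in Section \ref{sec:subframes}: any non-theorem is refuted by a valuation in finitely many variables, which lives in a finite subalgebra of $\Alg{\LSum{\frI}{\frF_i}}$ still validating the logic. The only point that genuinely needs care is to confirm that the reused construction is truly bound-free, so that no step of the tuning verification secretly relied on uniformity across a class; since that verification is entirely local, I expect no obstacle beyond this observation.
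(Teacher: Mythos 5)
Your proposal is correct and is exactly the argument the paper intends: the corollary is stated with the remark that ``the construction in the above proof also shows the following,'' and you have spelled out precisely that reading --- replace uniform tunability (Theorem \ref{thm:LFviaTuned}\eqref{thm:LFviaTuned-lfLog}) by tunability of the two given frames (Theorem \ref{thm:LFviaTuned}\eqref{thm:LFviaTuned-lfA}), rerun the tuning of $\clS$ verbatim, and observe that irreflexivity must now be a hypothesis because the reduction via Theorem \ref{thm:ref-to-irref} is only available for logics of classes, not for local finiteness of a single algebra. Your closing remarks on why the construction is bound-free and on deriving the finite model property are also in line with the paper (cf.\ the analogous observation for $\Log{\frF\restr Y}$ in Section \ref{sec:subframes}).
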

We do not know if this corollary remains true without the irreflexivity condition.

\subsection{Second semantic transfer result}
The sum operation given above
does not change the signature.
The following sum-like operation combines modal languages.
Let $\AlA$ and $\AlB$ be finite disjoint sets.
Elements of $\AlA$ and $\AlB$ will be addressed as {\em vertical} and {\em horizontal modalities}, respectively.

In this and the following subsection we always assume that
$L_1$ and $L_2$ are logics in the alphabets
of $\AlA$ and $\AlB$, respectively.

\begin{definition}\label{def:sumLex}
Let $\frI=(Y,(S_\Di)_{\Di\in \AlA})$ be an $\AlA$-frame, $(\frF_i)_{i\in Y}$ a family of $\AlB$-frames,
  $\frF_i=(X_i,(R_{i,\Di})_{\Di\in \AlB})$.
The {\em lexicographic sum} $\LSuml{\frI}{\frF_i}$ is the $(\AlA\cup\AlB)$-frame
$\left(\bigsqcup_{i \in Y} X_i, (S^\oplus_\Di)_{\Di\in\AlA}, (R_\Di)_{\Di\in\AlB}\right)$,  where
\begin{eqnarray*}
\text{for }\Di\in\AlA,&& (i,a) S^\oplus_\Di  (j,b) \text{ iff }   i S j;\\
\text{for }\Di\in\AlB,&& (i,a) R_\Di  (j,b)  \text{ iff }    i = j \;\&\;a R_{i,\Di}  b.
\end{eqnarray*}
For a class  $\clI$ of $\AlA$-frames and a class  $\clF$ of $\AlB$-frames, $\LSuml{\clI}{\clF}$ denotes
 the class of all sums
$\LSuml{\frI}{\frF_i}$, where $\frI \in \clI$ and all $\frF_i$ are in $\clF$.
For modal logics $L_1$ and $L_2$,
let $\LSuml{L_1}{L_2}$ be the logic of the class
$\LSuml{\clF_1}{\clF_2}$, where $\clF_i$ is the class of all frames of the logic $L_i$.
\end{definition}
In the case when all summands are equal, this operation is the {\em lexicographic
product of frames}; lexicographic products of modal logics were
introduced in \cite{Balb2009}.

According to the following observation, lexicographic sum can be considered as a particular case of sum in the sense of Definition \ref{def:sum-poly-extra}.
Let $(\emp)_\AlB$ denote the sequence of length $\AlB$ consisting of empty sets.
\begin{proposition}\label{prop:lexviasum}
Let $\frI=(Y,(S_\Di)_{\Di\in \AlA})$ be an $\AlA$-frame, $(\frF_i)_{i\in Y}$ a family of $\AlB$-frames, where
  $\frF_i=(X_i,(R_{i,\Di})_{\Di\in \AlB})$.
Then
$$\textstyle
\LSuml{\frI}{\frF_i}\; = \;\LSum{\frI'}{(X_i, (T_{i,\Di})_{\Di\in\AlA},  (R_{i,\Di})_{\Di\in \AlB})},
$$
where
$\frI'$ is the $(\AlA\cup\AlB)$-frame $(Y,(S_\Di)_{\Di\in\AlA},(\emp)_\AlB)$, and for $\Di\in\AlA$, $i\in Y$, we put
$T_{i,\Di}=X_i\times X_i$ whenever
$(i,i)\in S_\Di$, and $T_{i,\Di}=\emp$ otherwise.
\end{proposition}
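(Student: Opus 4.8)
The claim is an identity between two Kripke frames, so the plan is to verify that both constructions yield the same domain and the same family of relations, checking the relations one alphabet at a time.

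First I would observe that both sides have the same domain. On the left, $\LSuml{\frI}{\frF_i}$ has domain $\bigsqcup_{i\in Y} X_i$ by Definition \ref{def:sumLex}. On the right, the summands of the sum $\LSum{\frI'}{(X_i,\ldots)}$ have domains $X_i$ and the index frame $\frI'$ has domain $Y$, so by Definition \ref{def:sum-poly-extra} the domain of the sum is again $\bigsqcup_{i\in Y} X_i$. Thus it remains only to compare the relations carried by each modality in $\AlA\cup\AlB$.

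Next I would treat the horizontal modalities $\Di\in\AlB$. On the left, $(i,a)R_\Di(j,b)$ iff $i=j$ and $aR_{i,\Di}b$. On the right, the index frame $\frI'$ carries the empty relation $\emp$ in the $\AlB$-coordinate, so in the sum the ``$i\neq j$'' disjunct of the defining clause for $R^\Sigma_\Di$ is never satisfied; what survives is exactly $i=j$ together with $aR_{i,\Di}b$ (the summand relation in position $\Di$ being $R_{i,\Di}$). Hence the two relations coincide. For the vertical modalities $\Di\in\AlA$ I would split on whether $i=j$. When $i\neq j$, the left-hand relation gives $(i,a)S^\oplus_\Di(j,b)$ iff $iS_\Di j$, and the right-hand sum relation $R^\Sigma_\Di$ gives the same condition $iS_\Di j$ through its ``$i\neq j$'' disjunct, using that $\frI'$ carries $S_\Di$ in the $\AlA$-coordinate. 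When $i=j$, the left-hand relation holds iff $iS_\Di i$ (i.e. $(i,i)\in S_\Di$), independently of $a,b$; on the right, the ``$i=j$'' disjunct asks whether $aT_{i,\Di}b$, and $T_{i,\Di}$ was defined to be $X_i\times X_i$ exactly when $(i,i)\in S_\Di$ and $\emp$ otherwise, so $aT_{i,\Di}b$ holds for all $a,b$ precisely when $(i,i)\in S_\Di$, matching the left side.

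This is a routine unwinding of the two definitions, so I expect no real obstacle; the only point requiring care is the reflexive case $i=j$ for vertical modalities, where the lexicographic sum makes the whole diagonal block $X_i\times X_i$ either full or empty according to whether $i$ is an $S_\Di$-loop, and one must check that the auxiliary relation $T_{i,\Di}$ in the reduction has been set up to reproduce exactly this all-or-nothing behaviour. Once all four cases (two alphabets, and within $\AlA$ the two subcases $i=j$ and $i\neq j$) are verified, the two frames have identical relations and the identity follows.
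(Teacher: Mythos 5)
Your proposal is correct and matches the paper's proof, which simply states that the identity is straightforward from Definitions \ref{def:sum-poly-extra} and \ref{def:sumLex}; your case analysis (same domain, empty index relations killing the $i\neq j$ disjunct for $\AlB$, and the $T_{i,\Di}$ all-or-nothing diagonal blocks reproducing $S_\Di$-loops for $\AlA$) is exactly the intended verification.
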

\begin{proof}
Straightforward  by Definitions \ref{def:sum-poly-extra} and \ref{def:sumLex}.
\end{proof}

\begin{proposition}\label{prop:idlemodsTunable}
Let $\frF=(X,(R_\Di)_{\Di\in \AlB})$ and
$\frF'=(X,(T_\Di)_{\Di\in \AlA},(R_\Di)_{\Di\in \AlB})$ be frames, and
$T_\Di=\emp$ or $T_\Di=X\times X$ for every $\Di\in\AlA$.
If a partition $\clV$ of $X$ is tuned in $\frF$, then
$\clV$ is  tuned in $\frF'$.
\end{proposition}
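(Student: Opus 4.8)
The plan is to unwind the definition of a tuned partition (Definition \ref{def:tune}) and observe that almost all of the required conditions are already supplied by the hypothesis. Recall that $\clV$ is tuned in $\frF'$ precisely when it is $R$-tuned for each relation $R$ of $\frF'$, i.e.\ for every $R_\Di$ with $\Di\in\AlB$ and for every $T_\Di$ with $\Di\in\AlA$. Since $\frF$ and $\frF'$ share exactly the relations $(R_\Di)_{\Di\in\AlB}$, the assumption that $\clV$ is tuned in $\frF$ already delivers $R_\Di$-tunedness for all $\Di\in\AlB$. Hence the whole statement reduces to checking that $\clV$ is $T_\Di$-tuned for each $\Di\in\AlA$.

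For a fixed $\Di\in\AlA$ I would split into the two allowed shapes of $T_\Di$. If $T_\Di=\emp$, then the antecedent $\EE a\in U\,\EE b\in V\,(a\,T_\Di\,b)$ of the tuning condition \eqref{eq:part} can never hold, so the implication is vacuously true and $\clV$ is $T_\Di$-tuned. If instead $T_\Di=X\times X$, then every pair of points is related; since the blocks of a partition are nonempty, for any $U,V\in\clV$ and any $a\in U$ one may take any $b\in V$ and obtain $a\,T_\Di\,b$, so the consequent $\AA a\in U\,\EE b\in V\,(a\,T_\Di\,b)$ holds outright. In either case the tuning condition is satisfied, so $\clV$ is $T_\Di$-tuned.

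Combining the two observations, $\clV$ is $R$-tuned for every relation $R$ of $\frF'$, hence tuned in $\frF'$. There is no real obstacle here: the content of the proposition is simply that the two ``trivial'' relations $\emp$ and $X\times X$ impose no genuine tuning constraint---the empty relation because its antecedent is never met, the universal relation because its consequent is always met---so enriching a frame by relations of this special form cannot destroy tunedness of a partition.
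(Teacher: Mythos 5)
Your proof is correct and is essentially the paper's own one-line argument, spelled out: the relations indexed by $\AlB$ are shared, and the empty and universal relations impose no tuning constraint (the former vacuously, the latter because the consequent of \eqref{eq:part} always holds). Nothing to add.
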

\begin{proof}
Every partition is tuned with respect to the empty relation, and  with respect to the relation $X\times X$.
\end{proof}

\begin{theorem}\label{thm:localfin-lex-semantically}
If the logics $L_1$ and $L_2$ are locally tabular, then
the logic $\LSuml{L_1}{L_2}$  is locally tabular as well.
\end{theorem}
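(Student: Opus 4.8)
The plan is to reduce this to the first semantic transfer result, Theorem \ref{thm:Main1}, using the representation of lexicographic sums as ordinary sums provided by Proposition \ref{prop:lexviasum}. Let $\clF_1$ and $\clF_2$ be the classes of all frames of $L_1$ and $L_2$, respectively, so that $\LSuml{L_1}{L_2}=\Log{\LSuml{\clF_1}{\clF_2}}$. By Proposition \ref{prop:lexviasum}, every frame $\LSuml{\frI}{\frF_i}$ with $\frI\in\clF_1$ and $\frF_i\in\clF_2$ equals an ordinary sum $\LSum{\frI'}{\frF'_i}$, where $\frI'=(Y,(S_\Di)_{\Di\in\AlA},(\emp)_\AlB)$ and each summand $\frF'_i=(X_i,(T_{i,\Di})_{\Di\in\AlA},(R_{i,\Di})_{\Di\in\AlB})$ is obtained from $\frF_i$ by adjoining relations $T_{i,\Di}$ that are either empty or the full relation $X_i\times X_i$. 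All of these are $(\AlA\cup\AlB)$-frames, which is the common signature required by Theorem \ref{thm:Main1}.

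Accordingly, I would introduce two auxiliary classes in the signature $\AlA\cup\AlB$: the index class $\clI'=\{\frI'\mid \frI\in\clF_1\}$, obtained by extending each frame of $L_1$ with empty horizontal relations, and the summand class $\clF'$ consisting of all $(\AlA\cup\AlB)$-frames $(X,(T_\Di)_{\Di\in\AlA},(R_\Di)_{\Di\in\AlB})$ such that $(X,(R_\Di)_{\Di\in\AlB})\in\clF_2$ and each $T_\Di\in\{\emp,X\times X\}$. By construction and Proposition \ref{prop:lexviasum}, $\LSuml{\clF_1}{\clF_2}\subseteq\LSum{\clI'}{\clF'}$.

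The key step is to verify that $\Log{\clI'}$ and $\Log{\clF'}$ are both locally tabular, so that Theorem \ref{thm:Main1} applies. For this I would use the tuning characterization: by Theorem \ref{thm:LFviaTuned}\eqref{thm:LFviaTuned-lfLog} the classes $\clF_1$ and $\clF_2$ are uniformly tunable, say $f$-tunable and $g$-tunable. Proposition \ref{prop:idlemodsTunable} says precisely that adjoining a relation which is empty or full to a frame does not destroy tunedness of a partition; hence any finite tuned refinement obtained in $\frI$ (respectively $\frF_i$) remains tuned after passing to $\frI'$ (respectively $\frF'_i$), with the same cardinality bound. Thus $\clI'$ is $f$-tunable and $\clF'$ is $g$-tunable, and both $\Log{\clI'}$ and $\Log{\clF'}$ are locally tabular by Theorem \ref{thm:LFviaTuned}\eqref{thm:LFviaTuned-lfLog} again.

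Finally, Theorem \ref{thm:Main1} yields that $\Log{\LSum{\clI'}{\clF'}}$ is locally tabular. Since $\LSuml{\clF_1}{\clF_2}\subseteq\LSum{\clI'}{\clF'}$, the logic $\LSuml{L_1}{L_2}=\Log{\LSuml{\clF_1}{\clF_2}}$ extends $\Log{\LSum{\clI'}{\clF'}}$; as every extension of a locally tabular logic is locally tabular, $\LSuml{L_1}{L_2}$ is locally tabular. I do not expect a genuine obstacle here: the whole argument is a matter of assembling the earlier results correctly, and the only point requiring care is the bookkeeping that adding trivial vertical relations to the summands and trivial horizontal relations to the index frame preserves uniform tunability, which is exactly what Proposition \ref{prop:idlemodsTunable} supplies.
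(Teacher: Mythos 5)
Your proposal is correct and follows essentially the same route as the paper's own proof: both reduce the lexicographic sum to an ordinary sum via Proposition \ref{prop:lexviasum}, introduce the same auxiliary classes $\clI'$ and $\clF'$, verify their uniform tunability via Theorem \ref{thm:LFviaTuned}\eqref{thm:LFviaTuned-lfLog} and Proposition \ref{prop:idlemodsTunable}, and then invoke Theorem \ref{thm:Main1}. No gaps.
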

\begin{proof}
Let $\clI$ and $\clF$ be the classes of frames of the logics $L_1$ and $L_2$, respectively, $L$ denote the logic $\LSuml{L_1}{L_2}$.
Let
\begin{eqnarray*}
  \clI' &=& \{(Y,(S_\Di)_{\Di\in\AlA},(\emp)_\AlB)\mid  (Y,(S_\Di)_{\Di\in\AlA})\in \clI\},
\end{eqnarray*}
and let $\clF'$ be the class of frames $(X, (T_{\Di})_{\Di\in\AlA},  (R_{\Di})_{\Di\in \AlB})$
such that
$(X, (R_{\Di})_{\Di\in \AlB})$ is in $\clF$, and for all $\Di\in\AlA$,  $T_{\Di}=\emp$ or $T_{\Di}= X\times X$.
By Proposition \ref{prop:lexviasum},
$\LSuml{\clI}{\clF}\;\subseteq \;\LSum{\clI'}{\clF'}.$
By Theorem \ref{thm:LFviaTuned}\eqref{thm:LFviaTuned-lfLog}, classes $\clI$ and $\clF$ are uniformly tunable. Hence, by Proposition \ref{prop:idlemodsTunable},
classes $\clI'$ and $\clF'$ are uniformly tunable as well.  By Theorem \ref{thm:Main1},
the logic $L'$ of the class $\LSum{\clI'}{\clF'}$ is locally tabular. Since $L'\subseteq L$,
the statement follows.
\end{proof}

\subsection{Axiomatic transfer result}
Let
$\Phi(\AlA,\AlB)$ be the set of all formulas
\begin{equation}\label{eq:alpha-beta-gamma}
\Dih \Div p\to \Div p, \; \Div\Dih p\to \Div p, \;\Div p\to \Boxh \Div p
\end{equation}
with $\Div$ in $\AlA$ and $\Dih$ in $\AlB$.
The corresponding frame conditions are the following:
$(X,(R_\Di)_{\Di\in\AlA\cup\AlB})\mo \Phi(\AlA,\AlB)$
iff for all $\Div$ in $\AlA$ and $\Dih$ in $\AlB$,
\begin{eqnarray}
R_\Dih\circ R_\Div \subseteq R_\Div,\label{eq:alpha}\\
R_\Div\circ R_\Dih \subseteq R_\Div,\label{eq:beta}\\
{(R_\Dih)}^{-1}\circ R_\Div \subseteq R_\Div.\label{eq:gamma}
\end{eqnarray}

The formulas $\Phi(\AlA,\AlB)$ were considered in \cite{Balb2009} in connection with axiomatization problems
of lexicographic products,
  and also in \cite{Bekl-Jap}
in the context of polymodal provability logic.
\begin{proposition}[Follows from \cite{Balb2009}]
For a class $\clI$ of $\AlA$-frames and a class $\clF$ of $\AlB$ frames,
$$\LSuml{\clI}{\clF}\mo \Phi(\AlA,\AlB).$$
\end{proposition}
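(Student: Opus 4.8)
The plan is to invoke the relational characterization of $\Phi(\AlA,\AlB)$ stated just above, so that it suffices to check, for an arbitrary lexicographic sum $\frG=\LSuml{\frI}{\frF_i}$ (with $\frI=(Y,(S_\Di)_{\Di\in\AlA})$ and $\frF_i=(X_i,(R_{i,\Di})_{\Di\in\AlB})$) and for every pair $\Div\in\AlA$, $\Dih\in\AlB$, the three inclusions \eqref{eq:alpha}, \eqref{eq:beta}, \eqref{eq:gamma}. Here the relation of the vertical modality $\Div$ is $S^\oplus_\Div$ and the relation of the horizontal modality $\Dih$ is the relation written $R_\Dih$ on the domain $\bigsqcup_{i\in Y}X_i$. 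The two defining clauses of Definition \ref{def:sumLex} supply the structural facts that drive everything: a vertical pair $(i,a)\,S^\oplus_\Div\,(j,b)$ holds iff $iS_\Div j$, so vertical relatedness depends only on the indices and never on the second coordinates; and a horizontal pair $(i,a)\,R_\Dih\,(j,b)$ forces $i=j$, so every horizontal step, taken forwards or backwards, fixes the index.

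With these two facts the three checks become one-line relation chases. For \eqref{eq:alpha}, if $(i,a)\,R_\Dih\,(j,b)$ and $(j,b)\,S^\oplus_\Div\,(k,c)$, then the horizontal step gives $i=j$ and the vertical step gives $jS_\Div k$, hence $iS_\Div k$, i.e. $(i,a)\,S^\oplus_\Div\,(k,c)$. For \eqref{eq:beta}, if $(i,a)\,S^\oplus_\Div\,(j,b)$ and $(j,b)\,R_\Dih\,(k,c)$, the horizontal step gives $j=k$ while the vertical step gives $iS_\Div j$, so $iS_\Div k$ and again $(i,a)\,S^\oplus_\Div\,(k,c)$. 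For \eqref{eq:gamma}, a pair $\bigl((i,a),(k,c)\bigr)$ in $(R_\Dih)^{-1}\circ S^\oplus_\Div$ arises from some $(j,b)$ with $(j,b)\,R_\Dih\,(i,a)$ and $(j,b)\,S^\oplus_\Div\,(k,c)$; the first forces $i=j$ and the second gives $jS_\Div k$, whence $iS_\Div k$ as required. Quantifying over all $\Div\in\AlA$, $\Dih\in\AlB$ yields $\frG\mo\Phi(\AlA,\AlB)$, and since $\frG$ was an arbitrary member of $\LSuml{\clI}{\clF}$, the proposition follows.

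I expect no genuine obstacle here: the content is entirely in the two structural observations above, and the rest is bookkeeping. The only point demanding care is the notational matching between the generic frame-condition symbols $R_\Div,R_\Dih$ occurring in \eqref{eq:alpha}--\eqref{eq:gamma} and the concrete relations $S^\oplus_\Div$ and $R_\Dih$ of the sum, together with the observation that the argument is completely uniform in the summands $\frF_i$: nothing about their internal relations $R_{i,\Dih}$ is used beyond the fact that a horizontal step stays within a single summand. This is precisely why the inclusions hold for every lexicographic sum, regardless of which $\AlB$-frames are being summed.
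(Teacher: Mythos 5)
Your proof is correct and follows exactly the route the paper intends: the paper's own proof is the one-liner ``By Definition \ref{def:sumLex}.'', and your relation chases for \eqref{eq:alpha}--\eqref{eq:gamma} are precisely the details being left implicit there. Nothing to add.
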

\begin{proof}
By Definition \ref{def:sumLex}.
\end{proof}
Let $L_1\oplus L_2$ be the extension of the fusion of $L_1$ and $L_2$ with the axioms $\Phi(\AlA,\AlB)$:
\begin{equation}\label{eq:lexAx-def}
  L_1\oplus L_2 = L_1 * L_2+\Phi(\AlA,\AlB).
\end{equation}
In many cases, we have
\begin{equation}\label{eq:lexAx-cor}
\LSuml{L_1}{L_2}=L_1\oplus L_2.
\end{equation}
For example,
(\ref{eq:lexAx-cor}) holds for the logic $\LSuml{\GL}{\GL}$ (where $\GL$ is the   G\"odel-L\"ob logic) \cite{Bekl-Jap} and, as follows from  \cite{Balb2009},
for
$\LSuml{\LS{4}}{\LS{4}}$.

Let $\frF=(X,(R_\Di)_\Al)$ and $\frG=(Y,(S_\Di)_\Al)$ be  frames.
Recall that $f:X\to Y$ is called a {\em p-morphism} {\em from $\frF$ to $\frG$}, if the following holds
for all $\Di\in\Al$:
\begin{eqnarray}
  &&\text{for all $a,b\in X$, if  $aR_\Di b$, then $f(a)S_\Di f(b)$, and }\label{eq:p-mor-mon}\\
  &&\text{for all $a\in X$, $u\in Y$, if  $f(a)S_\Di u$, then
$aR_\Di a'$ and $f(a')=u$ for some $a'\in X$.}   \label{eq:p-mor-lift}
\end{eqnarray}
The notation $f:\frF\toto \frG$ means that $\frG$ is the image of $\frF$ under a p-morphism $f$;
in this case $\Log{\frF}\subseteq \Log{\frG}$ (see, e.g., \cite[Theorem 3.14 (iii)]{BDV}).

For $a$ in $\frF$, we put $\cone{\frF}{a} = \frF\restr R^*_\frF(a)$; we say that
$\cone{\frF}{a}$ is {\em rooted} and $a$ is a {\em root} of $\frF$.

\begin{lemma}\label{lem:sum-is-contained-inPsi}
If $L_1\oplus L_2$ is Kripke complete, then $\LSuml{L_1}{L_2}\subseteq L_1\oplus L_2$.
\end{lemma}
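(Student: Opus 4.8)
The plan is to use the assumed Kripke completeness to convert the syntactic inclusion into a semantic one, and then to realize every frame of $L_1\oplus L_2$ as a p-morphic image of a lexicographic sum of an $L_1$-frame with $L_2$-frames. Since $L_1\oplus L_2$ is Kripke complete, $L_1\oplus L_2=\Log{\clG}$ where $\clG$ is the class of all its frames, and since $\LSuml{L_1}{L_2}=\Log{\LSuml{\clF_1}{\clF_2}}$ by Definition \ref{def:sumLex}, it suffices to show $\LSuml{L_1}{L_2}\subseteq\Log{\frG}$ for each $\frG\in\clG$; intersecting over $\frG$ then gives the claim. So fix $\frG=(X,(R_\Div)_{\Div\in\AlA},(R_\Dih)_{\Dih\in\AlB})\in\clG$. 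As $\frG$ validates the fusion together with $\Phi(\AlA,\AlB)$, it satisfies the frame conditions \eqref{eq:alpha}, \eqref{eq:beta}, \eqref{eq:gamma}, from which I would extract the two facts driving the argument: the vertical successor sets are constant along horizontal edges, i.e.\ $R_\Div(a)=R_\Div(b)$ whenever $a R_\Dih b$ (from \eqref{eq:alpha} and \eqref{eq:gamma}), and each $R_\Div(a)$ is closed under horizontal successors (from \eqref{eq:beta}). By transitivity of the first fact, $R_\Div$ is constant along every horizontal path.

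Next I would build the covering sum. Take the index frame to be the $\AlA$-reduct $\frI=(X,(R_\Div)_{\Div\in\AlA})$; since $L_1$ is a logic over $\AlA$ and $\frG\mo L_1$, the reduct validates every $\AlA$-formula of $L_1$, so $\frI\in\clF_1$. For each $w\in X$ let $\frF_w$ be the subframe of the $\AlB$-reduct of $\frG$ generated through the horizontal relations by $w$; as a generated subframe of an $L_2$-frame, $\frF_w\in\clF_2$. Then $\LSuml{\frI}{\frF_w}\in\LSuml{\clF_1}{\clF_2}$, whence $\LSuml{L_1}{L_2}\subseteq\Log{\LSuml{\frI}{\frF_w}}$. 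The domain of this sum consists of the pairs $(w,u)$ with $u$ horizontally reachable from $w$, so a single point of $\frG$ typically occurs in many summands; this controlled duplication is precisely what is needed, because the vertical relation of a lexicographic sum is full between summands whereas that of $\frG$ need not be. I then define $p(w,u)=u$.

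The heart of the proof is to verify that $p\colon\LSuml{\frI}{\frF_w}\toto\frG$ is a surjective p-morphism, for then $\Log{\LSuml{\frI}{\frF_w}}\subseteq\Log{\frG}$ and we are finished. Surjectivity is clear from $p(u,u)=u$. For a horizontal $\Dih\in\AlB$ the p-morphism conditions are immediate from the cone structure: horizontal edges of the sum stay inside a single $\frF_w$ and there coincide with those of $\frG$, and each cone is closed under horizontal successors, giving the back condition. The essential case is the vertical modalities. For monotonicity, suppose $(w,u)S^\oplus_\Div(w',u')$, i.e.\ $w R_\Div w'$; since $u$ lies in the horizontal cone of $w$, constancy along horizontal paths gives $R_\Div(u)=R_\Div(w)\ni w'$, and since $u'$ is horizontally reachable from $w'$, closure of $R_\Div(u)$ under horizontal successors yields $u'\in R_\Div(u)$, that is $u R_\Div u'$. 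For the back condition, if $u R_\Div v$ in $\frG$ then $v\in R_\Div(u)=R_\Div(w)$, so $w R_\Div v$ and hence $(w,u)S^\oplus_\Div(v,v)$ with $p(v,v)=v$.

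I expect the vertical monotonicity clause to be the main obstacle, since it is the only step that simultaneously uses all three conditions packaged in $\Phi(\AlA,\AlB)$: the equalities $R_\Div(u)=R_\Div(w)$ coming from \eqref{eq:alpha} and \eqref{eq:gamma}, and the horizontal closure coming from \eqref{eq:beta}. Everything else—surjectivity, the horizontal clauses, and the vertical back clause—is routine. Since the construction uses no rootedness assumption, it applies verbatim to every $\frG\in\clG$, which completes the argument.
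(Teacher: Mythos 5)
Your proof is correct, and its core is the same as the paper's: cover a frame of $L_1\oplus L_2$ by a lexicographic sum whose summands are horizontal cones, and project. The frame-theoretic facts you isolate --- that $R_\Div(a)=R_\Div(b)$ whenever $a R_\Dih b$ (from \eqref{eq:alpha} and \eqref{eq:gamma}) and that each $R_\Div(a)$ is closed under horizontal successors (from \eqref{eq:beta}) --- are exactly the content of the inclusions $H^*\circ R_\Di\subseteq R_\Di$, $R_\Di\circ H^*\subseteq R_\Di$, ${H^*}^{-1}\circ R_\Di\subseteq R_\Di$ that the paper derives, and your verification of the vertical forth and back conditions matches the paper's step for step. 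Where you genuinely diverge is in the choice of index frame: the paper first reduces to rooted frames and takes the index frame to be the vertical cone $\cone{(X,(R_\Di)_{\Di\in\AlA})}{r}$ of the root, which forces it to prove the identity $(V\cup H)^*=V^*\circ H^*$ in order to get surjectivity of the projection; you instead index over the entire $\AlA$-reduct $(X,(R_\Div)_{\Div\in\AlA})$, so that every point $u$ appears as $(u,u)$ and surjectivity is immediate, with no rootedness reduction and no composition identity needed. This is a legitimate simplification --- the full reduct is still an $L_1$-frame, since validity of $\AlA$-formulas depends only on the $\AlA$-relations --- at the mild cost of a more redundant covering sum (each point occurs in every summand indexed by a point that horizontally reaches it), which is harmless for the purpose of the lemma.
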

\begin{proof}

Let $\frF=(X,(R_\Di)_{\Di\in\AlA\cup\AlB})$ be a frame with a root $r$,
$\frF\mo L_1\oplus L_2$.

Put $\frI=\cone{(X,(R_\Di)_{\Di\in\AlA})}{r}$.
For $i$ in $\frI$, put
$\frF_i=\cone{(X,(R_\Di)_{\Di\in\AlB})}{i}$.
Finally, put $$\frF^\oplus=\LSuml{\frI}{\frF_i}.$$
Observe that $I\mo L_1$ and $F_i\mo L_2$ for all $i$ in $I$.
Hence,  $\frF^\oplus$  validates $\LSuml{L_1}{L_2}$.

We will show that $\frF$ is a p-morphic image of $\frF^\oplus$.
Consider the unions of vertical and of horizontal relations in $\frF$:
\begin{eqnarray*}
&&\textstyle V=\bigcup_{\Di\in\AlA} R_\Di, \quad H =\bigcup_{\Di\in\AlB} R_\Di.
\end{eqnarray*}
Notice that we have the following inclusions between relations in $\frF$ for every vertical modality $\Di\in\AlA$:
\begin{eqnarray}
\label{eq:R-star-S-subS}
H^*\circ R_\Di &\subseteq &R_\Di,\\
\label{eq:S-R-star-subS}
R_\Di\circ H^* &\subseteq &R_\Di,\\
\label{eq:Rconv-star-S-subS}
{H^*}^{-1}\circ R_\Di &\subseteq &R_\Di.
\end{eqnarray}
Indeed, \eqref{eq:R-star-S-subS} easily follows from
\eqref{eq:alpha}, \eqref{eq:S-R-star-subS}  from \eqref{eq:beta}, and
\eqref{eq:Rconv-star-S-subS} from \eqref{eq:gamma}.

Let us check that
\begin{equation}\label{eq:cup-star-circ-star}
(V\cup H)^*=V^*\circ H^*.
\end{equation}
That  $(H\cup V)^*\supseteq H^*\circ V^*$ is immediate. To prove
the converse inclusion, by induction on $n$ we show that
$(H\cup V)^{n}\subseteq H^*\circ V^*$ for every $n\geq 0$. The case $n=0$ is trivial.
By the induction hypotheses,
$$
(V\cup H)^{n}\circ (V\cup H)\subseteq
(V^*\circ H^*) \circ (V\cup H)=
((V^*\circ H^*) \circ V)\cup ((V^*\circ H^*) \circ H).$$
By \eqref{eq:R-star-S-subS}, $H^* \circ V\subseteq V$.
So we have $(V^*\circ H^*) \circ V=V^*\circ (H^* \circ V)\subseteq V^*$, and hence
$$(V\cup H)^{n+1}\subseteq V^*\cup (V^*\circ (H^* \circ H))  =
V^*\circ H^*,$$
which proves \eqref{eq:cup-star-circ-star}.

For $(i,a)$ in $\frF^\oplus$, put $$f(i,a)= a.$$

Let us show that $f$ is surjective. Suppose $a\in X$.
Since $r$ is a root of $\frF$,
$r (V\cup H )^* a$.
By (\ref{eq:cup-star-circ-star}), we have $rV^*iH^*a$ for some $i$.
Then $i$ is in $\frI$, $a$ is in $\frF_i$, and so $(i,a)$ is in
$\frF^\oplus$.

Let us show that $f$ is a p-morphism.
Fix the following notation:
$$
 \frI=(Y,(S_\Di)_{\Di\in\AlA}),\quad
 \frF^\oplus=(X^\oplus, (S^\oplus_\Di)_{\Di\in\AlA},(R^\oplus_\Di)_{\Di\in\AlB}).
$$
By the construction, the frame $(X^\oplus, (R^\oplus_\Di)_{\Di\in\AlB})$ is
the disjoint sum of a family of generated
subframes of $(X,(R_\Di)_{\Di\in\AlB})$. It follows that
\begin{equation}\label{eq:p-morph-hor}
f:(X^\oplus, (R^\oplus_\Di)_{\Di\in\AlB}) \toto (X,(R_\Di)_{\Di\in\AlB}).
\end{equation}
Let us check that for every $\Di\in\AlA$,
\begin{equation}\label{eq:p-morph-vert}
f:(X^\oplus, S^\oplus_\Di)\toto (X,R_\Di).
\end{equation}
To check \eqref{eq:p-mor-mon}, assume that $(i,a)S^\oplus_\Di(j,b)$.
Then $iR_\Di j$, $iH^*a$, and $jH^*b$. Thus $a ({H^*}^{-1} \circ R_\Di \circ H^* )b$.
We have ${H^*}^{-1}\circ R_\Di\subseteq R_\Di$  by \eqref{eq:Rconv-star-S-subS},
and $R_\Di\circ H^*\subseteq R_\Di$ by \eqref{eq:S-R-star-subS},
so $aR_\Di b$.
To check \eqref{eq:p-mor-lift}, consider $(i,a)\in X^\oplus$ and suppose $a R_\Di b$. Then we have $i H^* a R_\Di b$. By (\ref{eq:R-star-S-subS}), $i R_\Di b$.
It follows that  $b$ is in $\frI$. Clearly, $b$ is in $\frF_b$, so $(b,b)$ is in
$\frF^\oplus$. By the definition of lexicographic sum,  we get $(i,a)S^\oplus_\Di (b,b)$.
Hence we have \eqref{eq:p-morph-vert}.
From \eqref{eq:p-morph-hor} and \eqref{eq:p-morph-vert}, we obtain that $f:\frF^\oplus\toto \frF$.

So every rooted frame $\frF$ of $L_1\oplus L_2$ is a p-morphic image of a frame of $\LSuml{L_1}{L_2}$,
which proves the lemma.
\end{proof}

\begin{theorem}\label{thm:trasfer-via-axioms}
Let $L_1$ and $L_2$ be locally tabular logics.  If $L_1\oplus L_2$  is  Kripke complete, then it is locally tabular.
\end{theorem}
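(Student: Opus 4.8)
The plan is to assemble the theorem from two results already in hand, namely the semantic transfer Theorem~\ref{thm:localfin-lex-semantically} and the containment Lemma~\ref{lem:sum-is-contained-inPsi}, together with the elementary fact that local tabularity passes to extensions.

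First I would apply Theorem~\ref{thm:localfin-lex-semantically} to the hypothesis that $L_1$ and $L_2$ are locally tabular, concluding that the lexicographic sum logic $\LSuml{L_1}{L_2}$ is locally tabular. This step carries all of the genuinely hard semantic content, since Theorem~\ref{thm:localfin-lex-semantically} was itself derived from the sum transfer result (Theorem~\ref{thm:Main1}) via the tunability machinery. Next, I would invoke the Kripke completeness hypothesis together with Lemma~\ref{lem:sum-is-contained-inPsi} to obtain the inclusion $\LSuml{L_1}{L_2}\subseteq L_1\oplus L_2$; thus $L_1\oplus L_2$ is an extension of the logic $\LSuml{L_1}{L_2}$. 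Finally, I would appeal to the fact recalled in the introduction that every extension of a locally tabular logic is again locally tabular (equivalently, every subvariety of a locally finite variety is locally finite). Applying this to the locally tabular logic $\LSuml{L_1}{L_2}$ and its extension $L_1\oplus L_2$ yields the claim.

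The synthesis itself is a one-line argument; the real difficulty lies upstream, in Lemma~\ref{lem:sum-is-contained-inPsi}. That is where the completeness hypothesis is essential: it permits representing an arbitrary rooted $L_1\oplus L_2$-frame as a p-morphic image of a lexicographic sum built from its vertical and horizontal cones, so that validity of the lexicographic sum logic transfers downward to the frame. Without Kripke completeness one cannot guarantee the inclusion $\LSuml{L_1}{L_2}\subseteq L_1\oplus L_2$, which is exactly why the theorem is stated only under that assumption.
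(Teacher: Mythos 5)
Your proposal is correct and matches the paper's proof exactly: the paper likewise combines Lemma~\ref{lem:sum-is-contained-inPsi} (which uses the Kripke completeness hypothesis) to get $\LSuml{L_1}{L_2}\subseteq L_1\oplus L_2$, Theorem~\ref{thm:localfin-lex-semantically} for local tabularity of $\LSuml{L_1}{L_2}$, and the closure of local tabularity under extensions.
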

\begin{proof}
By Lemma \ref{lem:sum-is-contained-inPsi}, $\LSuml{L_1}{L_2}\subseteq L_1\oplus L_2$.
By Theorem \ref{thm:localfin-lex-semantically},  the logic $\LSuml{L_1}{L_2}$ is locally tabular, and so $L_1\oplus L_2$  is.
\end{proof}

Formulas  \eqref{eq:alpha-beta-gamma} are Sahlqvist formulas (see, e.g., \cite[Section 10.3]{CZ}).
It follows that if the logics $L_1$ and $L_2$ are canonical (in the sense that they are valid in their
$\omega$-canonical frames), then
$L_1\oplus L_2$ is canonical, and so is Kripke complete.
\begin{corollary}
If $L_1$ and $L_2$ are canonical locally tabular logics, then $L_1\oplus L_2$ is locally tabular.
\end{corollary}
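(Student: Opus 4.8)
The plan is to reduce everything to Theorem~\ref{thm:trasfer-via-axioms}, which already states that local tabularity of $L_1\oplus L_2$ follows from local tabularity of $L_1$ and $L_2$ together with Kripke completeness of $L_1\oplus L_2$. Since the hypotheses give local tabularity of both factors directly, the only additional fact I need to verify is that $L_1\oplus L_2$ is Kripke complete.

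I would obtain completeness by showing that $L_1\oplus L_2$ is canonical, i.e.\ valid in its $\omega$-canonical frame; canonicity implies Kripke completeness via the Canonical Model Theorem (Proposition~\ref{prop:k-canonical-model}). To argue canonicity, I would split the axiomatization $L_1\oplus L_2 = L_1\fus L_2+\Phi(\AlA,\AlB)$ into two parts. First, the fusion $L_1\fus L_2$ of two canonical logics is again canonical: this is a standard transfer property of the fusion operation, and it ensures that the $\omega$-canonical frame of $L_1\fus L_2$ validates both $L_1$ and $L_2$. Second, the formulas in $\Phi(\AlA,\AlB)$ are Sahlqvist formulas, as recorded in the discussion immediately preceding the corollary; by Sahlqvist's canonicity theorem, adjoining Sahlqvist axioms to a canonical logic yields again a canonical logic. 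Combining these, $L_1\oplus L_2$ is canonical, hence Kripke complete, and Theorem~\ref{thm:trasfer-via-axioms} delivers local tabularity.

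The routine but essential point to get right is the canonicity bookkeeping: one must check that passing from the canonical frames of the factors to the canonical frame of the extension preserves validity of the Sahlqvist axioms, which is exactly what Sahlqvist's theorem secures. I expect no genuine obstacle here, since all the required machinery---fusion transfer of canonicity and Sahlqvist canonicity---is classical. The mathematical content of the corollary lies entirely in the preceding Theorem~\ref{thm:trasfer-via-axioms}, which in turn rests on the semantic transfer result Theorem~\ref{thm:localfin-lex-semantically}; the corollary itself merely supplies a concrete sufficient condition (canonicity of the factors) under which the completeness hypothesis of Theorem~\ref{thm:trasfer-via-axioms} is automatically met.
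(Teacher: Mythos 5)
Your proposal is correct and follows essentially the same route as the paper: the paper likewise observes that the formulas $\Phi(\AlA,\AlB)$ are Sahlqvist, deduces canonicity (hence Kripke completeness) of $L_1\oplus L_2$ from canonicity of the factors, and then invokes Theorem~\ref{thm:trasfer-via-axioms}. The only difference is that you make explicit the fusion transfer of canonicity, which the paper leaves implicit.
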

Notice that there are non-canonical locally tabular logics \cite[Section 6]{Goldblatt1995}.

\section{Acknowledgements}
I am grateful to the reviewer for multiple useful suggestions on an earlier version of the manuscript.

The work on this paper was partially supported by NSF Grant DMS - 2231414.

\bibliographystyle{amsalpha}
\bibliography{sumsLF}
\end{document}